\def\mathleft{\@fleqntrue\@mathmargin3em}
\def\mathcenter{\@fleqnfalse}
\let\amsamp=&
\let\comma=,
\let\lbracket=[
\let\rbracket=]
\newtheorem{theorem}{Theorem}[section]
\newtheorem{proposition}[theorem]{Proposition}
\newtheorem{definition}[theorem]{Definition}
\newtheorem{example}[theorem]{Example}
\newtheorem{remark}[theorem]{Remark}
\def\R{\mathbb{R}}
\def\C{\mathbb{C}}
\def\Z{\mathbb{Z}}
\def\M{\mathcal{M}}
\def\Cx{\mathbb{C}^{\times}}
\def\bim{\beta_{i^-}}
\def\d{\mathrm{d}}
\DeclareMathOperator{\Hom}{Hom}
\DeclareMathOperator{\Fun}{Fun}
\newcommand{\comp}{~\raisebox{1pt}{\tikz \draw[line width=0.6pt] circle(1.3pt);}~}
\definecolor{JK}{RGB}{0,178,229}
\definecolor{JY}{RGB}{154,205,50}
\definecolor{G}{RGB}{255,149,0}
\definecolor{M}{RGB}{246,42,51}
\definecolor{SA}{RGB}{238,134,167}
\definecolor{C}{RGB}{0,187,133}
\definecolor{Z}{RGB}{143,118,214}
\definecolor{JO}{RGB}{0,103,192}
\definecolor{JI+}{HTML}{faaf28}
\definecolor{JI}{RGB}{255,207,85}
\definecolor{I}{RGB}{0,106,184}
\definecolor{JT}{RGB}{246,139,30}
\definecolor{JA}{RGB}{46,139,87}
\definecolor{MO}{RGB}{0,54,134}
\title{Natural transformations between braiding functors\\ in the Fukaya category}
\author{Yujin Tong}
\date{}
\begin{document}
\maketitle

\begin{abstract}
We study the space of $A_\infty$-natural transformations between braiding functors acting on the Fukaya category associated to the Coulomb branch $\mathcal{M}(\bullet,1)$ of the $\mathfrak{sl}_2$ quiver gauge theory.
We compute all cohomologically distinct $A_\infty$-natural transformations $\mathrm{Nat}(\mathrm{id}, \mathrm{id})$ and $\mathrm{Nat}(\mathrm{id}, \beta_i^-)$, where $\beta_i^-$ denotes the negative braiding functor. 
Our computation is carried out in a diagrammatic framework compatible with the established embedding of the KLRW category into this Fukaya category.
We then compute the Hochschild cohomology of the Fukaya category using an explicit projective resolution of the diagonal bimodule obtained via the Chouhy--Solotar reduction system, and use this to classify all cohomologically distinct natural transformations.
These results determine the higher $A_\infty$-data encoded in the braiding functors and their natural transformations, and provide the first step toward a categorical formulation of braid cobordism actions on Fukaya categories.
\end{abstract}

\section{Introduction}

Braid group actions on triangulated categories are nontrivial and reveal a subtle interplay between topology and algebra. 
One interesting example is the action on Fukaya categories.

Conceptually, one considers a symplectic manifold $\mathcal{Y}_{\mathbf{a}}$ fibering over $\C$, determined by the data of $|\mathbf{a}|$ unordered points $\mathbf{a} = \{a_1, \dots, a_{|\mathbf{a}|}\}$ on $\C$, referred to as \emph{punctures}, whose fibers exhibit certain singular behaviors.

Fix a base point $\mathbf{x}$ in the configuration space of punctures $\mathrm{Conf}_{|\mathbf{x}|}(\C)/S_{|\mathbf{x}|}$. 
For a based loop in this configuration space, we obtain an $S^1$-family of symplectic manifolds equipped with fiberwise symplectic structures. 
The induced symplectic connection yields a monodromy symplectomorphism on the reference fiber $\mathcal{Y}_{\mathbf{x}}$. 
Viewing the braid group as the fundamental group of the configuration space of $|\mathbf{x}|$ unordered points in $\C$, we obtain a natural homomorphism
\begin{equation}
    \mathrm{Br}_{|\mathbf{x}|} = \pi_1(\mathrm{Conf}_{|\mathbf{x}|}(\C)/S_{|\mathbf{x}|})
    \longrightarrow 
    \pi_0(\mathrm{Aut}(\mathcal{Y}_{\mathbf{x}}, \omega)).
\end{equation}

Moreover, each symplectic automorphism of $\mathcal{Y}_{\mathbf{x}}$ induces corresponding endofunctors of its Fukaya category by transporting Lagrangian submanifolds and their Floer complexes. 
This gives rise to a representation
\begin{equation}
    \rho \colon
    \mathrm{Br}_{|\mathbf{x}|} = \pi_1(\mathrm{Conf}_{|\mathbf{x}|}(\C)/S_{|\mathbf{x}|})
    \longrightarrow 
    \pi_0(\mathrm{Aut}(\mathcal{Y}_{\mathbf{x}}))
    \longrightarrow 
    \mathrm{Aut}(\mathrm{Fuk}(\mathcal{Y}_{\mathbf{x}})),
\end{equation}
whose image consists of the \emph{braiding functors} on $\mathrm{Fuk}(\mathcal{Y}_{\mathbf{x}})$. 
For any two braiding functors $\beta_1$ and $\beta_2$, we can consider the space of $A_\infty$-natural transformations $\mathrm{Nat}(\beta_1, \beta_2)$ in the Fukaya category.
\begin{definition}
For the standard generators $\sigma_i$ of $\mathrm{Br}_{|\mathbf{x}|}$, we denote
\[
    \beta_{i^+} \coloneqq  \rho(\sigma_i), 
    \qquad
    \beta_{i^-} \coloneqq  \rho(\sigma_i^{-1}),
\]
and refer to them respectively as the \emph{positive} and \emph{negative braiding functors}.
\end{definition}

~

The punctures may arise, for instance, from singular fibers or from the critical values of a superpotential. 
Instances of the former appear in \cite{KT}, where fibrations with singular fibers are analyzed. 
In the present paper, we focus on the latter situation, in which the punctures specify a superpotential. 
More specifically, we consider the symplectic fibration
\[
\pi\colon \C_x\times \Cx_y \to \C,
\qquad \pi(x,y)=x.
\]
For a collection of punctures $\mathbf{x}=\{x_1,\dots,x_{|\mathbf{x}|}\}\in\mathrm{Conf}_{|\mathbf{x}|}(\C_x)/S_{|\mathbf{x}|}$, the associated Landau--Ginzburg superpotential $\mathcal{W}_{\mathbf{x}}$ is given by
\begin{equation}\label{eq:Wx}
  \mathcal{W}_{\mathbf{x}} = y \prod_{i=1}^{n}(x - x_i),
\end{equation}
which determines the fiberwise stops at $\mathcal{W}_{\mathbf{x}}\to +\infty$. 
We will consider the manifold with two stops in the base.

This space has a natural origin as the Coulomb branch $\M(\bullet,1)$ of the $\mathfrak{sl}_2$ quiver gauge theory with dimension vector $\vec{d}=1$, 
which plays a central role in Aganagic's categorification of Khovanov cohomology~\cite{Mina}. We will consider a full subcategory $\mathrm{Fuk}_{|||}(\M (\bullet,1), \mathcal{W}_{\mathbf{x}})\subset \mathrm{Fuk}(\M (\bullet,1), \mathcal{W}_{\mathbf{x}})$ generated by objects associated to curves without closed components \cite{Mina, ADLSZ, Elise}, which contains all the interested objects.

~

The braid group itself extends naturally to the \emph{braid cobordism category} $\mathsf{BrCob}_{|\mathbf{x}|}$, whose objects are braid group elements (tangles from $|\mathbf{x}|$ points to themselves) and whose morphisms are braid cobordisms between them. 
We expect the correspondence $\rho$ to lift to a functor
\begin{equation}
    \rho^{\sharp} \colon 
    \mathsf{BrCob}_{|\mathbf{x}|} 
    \longrightarrow 
    \mathrm{End}(\mathrm{Fuk}(\mathcal{Y}_{\mathbf{x}})),
\end{equation}
sending each braid cobordism to a natural transformation between the corresponding braiding functors.

In this paper, however, we will not determine the functor $\rho^{\sharp}$. 
Instead, we compute all (cohomologically distinct) natural transformations 
$\mathrm{Nat}(\mathrm{id},\mathrm{id})$ and $\mathrm{Nat}(\mathrm{id},\bim)$. 
This problem is interesting in its own right and reveals several elegant structures along the way. 
Once the natural transformations from the identity functor to the generating braiding functors are understood, 
their compositions yield much of the information about natural transformations between arbitrary braiding functors.

\begin{remark}
There are several reasons to focus on negative rather than positive braiding functors. 
While ordinary natural transformations from $\mathrm{id}$ to $\beta_{i^+}$ can be constructed easily, 
no such transformation exists from $\mathrm{id}$ to $\bim$.

For the Lagrangian $I_{i-1}$ introduced in Section~\ref{sect:Ibrane}, after applying the positive braiding, there is a degree 0 intersection point in $\Hom(I_{i-1},\beta_{i^+}I_{i-1})$, and its corresponding natural transformation is easy to determine.
However, for the negative braiding, one finds a degree 2 morphism 
$a \in \Hom^2(I_{i-1}, \bim I_{i-1})$.
If this morphism is to arise from a natural transformation, then there should exist a nontrivial $\eta:\mathrm{id}\Rightarrow \bim$ of degree $|\eta|=2$. 
\begin{figure}[H]
  \centering
  \begin{tikzpicture}[scale=1.3]
  \foreach \x in {0,1,2,3}{
    \node[text=M,scale=1.7] at (\x,0) {$\ast$};
  }
  \draw[JI,line width=1pt](0,0)--node[below,midway]{$I_{i-1}$} (1,0);
  \draw[JK,line width=1pt](0,0) to[out=45,in=90,looseness=2]node[above,midway]{$\bim I_{i-1}$}(2,0);
  \draw[->,SA!50,line width=1pt] (1.1,0.1) arc[start angle=180,end angle=0,x radius=0.4cm,y radius=0.4cm];
  \node[SA] at (1.5,0) {$\bim$};
  \draw[->,SA!50,line width=1pt] (1.9,-0.1) arc[start angle=0,end angle=-180,x radius=0.4cm,y radius=0.4cm];
  \fill (0,0) circle (1pt);
  \node at (-0.3,-0.3) {\small$a$};
  \end{tikzpicture}
  \caption{A degree-2 element in $\Hom(I_{i-1}, \bim I_{i-1})$}
\end{figure}
These indicates that higher structures must be taken into account, 
and that one should work with $A_\infty$-natural transformations. 
We will be able to show that this is indeed the case.
\end{remark}

\paragraph{Acknowledgements.}
The author would like to thank Mina Aganagic and Peng Zhou for invaluable guidance and discussions, 
and Marco David for many helpful conversations and for first obtaining the computation in Theorem~\ref{thm:Marco}.

\tableofcontents

\section{Structure of the Fukaya category}
\subsection{Generating objects}
Our interested subcategory $\mathrm{Fuk}_{|||}(\M (\bullet,1), \mathcal{W}_{\mathbf{x}})$, proved in \cite{Elise}, is generated by the $T_i$ objects, whose shapes in the base are the straight vertical lines between punctures drawn in Fig.~\ref{fig:Ti_base}, and whose shapes in the fibers are the straight lines from $y=0$ to $y=\infty$ drawn in Fig.~\ref{fig:Ti_fiber}. There are $|\mathbf{x}|+1$ generators, we label them as $T_0$, $T_1$, \dots, $T_{|\mathbf{x}|}$ according to Fig.~\ref{fig:Ti}.

\begin{figure}[H]
    \centering
    \begin{subfigure}[b]{0.49\textwidth}
    \centering
    \begin{tikzpicture}[scale=1.3, every node/.style={scale=1}, line width=0.8pt]
      \draw[black,thick] (0,0) ellipse [x radius=2, y radius=1];

      \foreach \x in {-1.2, -0.4, 0.4, 1.2} {
        \node[text=M,scale=1.5] at (\x,0) {$*$};
      }

      \begin{scope}
        \clip (0,0) ellipse [x radius=2, y radius=1];
        \fill[M] (-1.97,0) ellipse [x radius=0.08, y radius=0.2];
        \fill[M] ( 1.97,0) ellipse [x radius=0.08, y radius=0.2];
      \end{scope}

      \foreach [count=\i] \x in {-1.6, -0.8, 0.0, 0.8, 1.6} {
        \pgfmathtruncatemacro{\n}{\i - 1}
        \pgfmathsetmacro{\ymax}{sqrt(1 - (\x*\x)/4)}
        \draw[JK,thick] (\x,-\ymax) -- (\x,\ymax);
        \node[JK,above=1pt] at (\x,\ymax) {$T_{\n}$};
      }

    \end{tikzpicture}
    \caption{Base}
    \label{fig:Ti_base}
    \end{subfigure}
    \begin{subfigure}[b]{0.49\textwidth}
    \centering
	  \begin{tikzpicture}[scale=0.6, every node/.style={scale=1}, line width=0.8pt]
      \draw (-8.5,2.5) -- (0,2.5);
      \draw (-8.5,0) -- (0,0);
      \draw (-8.5,1.25) ellipse (0.25 cm and 1.25 cm);
      \draw (0,0) arc[
          start angle=-90,
          end angle=90,
          x radius=0.25cm,
          y radius=1.25cm
          ];
        \draw[dashed] (0,2.5) arc[
          start angle=90,
          end angle=270,
          x radius=0.25cm,
          y radius=1.25cm
          ];
      \draw[thick, JK, name path=T1] (.25,1) node[right] {$T_i$} -- (-8.25,1);
      \fill[M] (0,2.5) circle (4pt);
      \node[above] at (0,3) {$y=\infty$};
      \node[above] at (-8.5,3) {$y=0$};
    \end{tikzpicture}
    \caption{Fiber}
    \label{fig:Ti_fiber}
    \end{subfigure}
    \caption{The generators $T_i$, the red dots and stars denote the stops and punctures}
    \label{fig:Ti}
\end{figure}
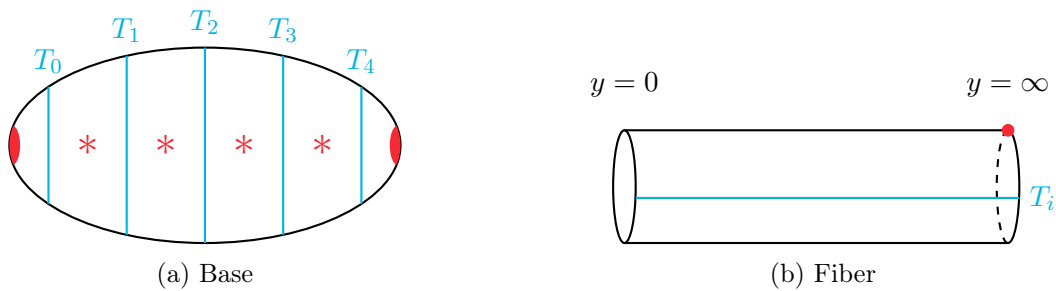

\begin{remark}
  Lagrangians with such shape in the fibers will be called $T$-branes, in the future we will simply draw the base and label the fiber type.
\end{remark}

\subsection{Morphisms between generators}
\label{sect:ajisk}

For generating objects $T_i$, $T_j$, the morphism space is given by their Floer complex
\begin{equation}
  \Hom ^{\bullet}(T_i,T_j)\coloneqq CF^{\bullet}(T_i,T_j)
\end{equation}

To compute this, we need to positively (counterclockwise) wrap the first input. The result for $\Hom (T_3,T_1)$ is shown in Fig.~\ref{fig:Mor}, where there is a unique intersection in the base, and infinitely many intersection points, indexed by $\Z_{\ge 0}$, above it in the fiber.

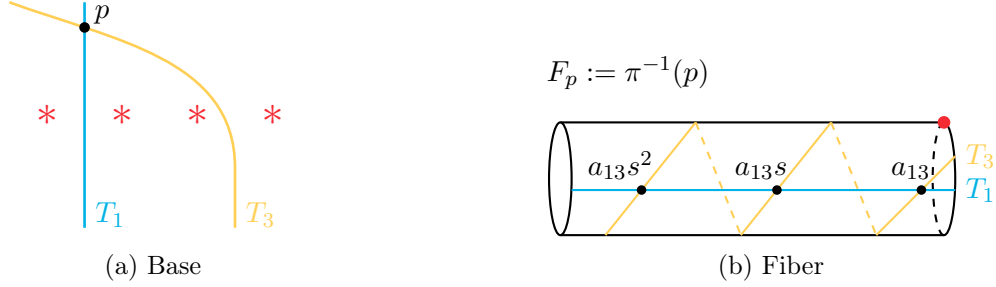
\begin{figure}[H]
    \centering
	\begin{subfigure}[b]{0.49\textwidth}
    \centering
    \begin{tikzpicture}[scale=1]
      \foreach \x in {0,1,2,3}{
    \node[text=M,scale=1.5] at (\x,0) {$\ast$};
  }
  \begin{scope}[xshift=4.5cm]
  \draw[thick, JI, name path=T3, line width=1pt] (-2,-1.5)node[right,yshift=5pt] {$T_3$}--(-2,-0.7) to[out=90,in=-20](-5,1.5);
  \draw[thick, JK, name path=T1, line width=1pt] (-4,-1.5)node[right,yshift=5pt] {$T_1$}--(-4,1.5);
  \fill[name intersections={of=T1 and T3}] (intersection-1) circle (2pt) node[right, yshift=5pt] {$p$};
  \end{scope}
  \end{tikzpicture}
  \caption{Base}
  \end{subfigure}
  \begin{subfigure}[b]{0.49\textwidth}
  \centering
	\begin{tikzpicture}[scale=.6,line width=0.8pt]
		\draw (-8.5,2.5) -- (0,2.5);
		\draw (-8.5,0) -- (0,0);
		\draw (-8.5,1.25) ellipse (0.25 cm and 1.25 cm);
		\draw (0,0) arc[
				start angle=-90,
				end angle=90,
				x radius=0.25cm,
				y radius=1.25cm
				];
	    \draw[dashed] (0,2.5) arc[
				start angle=90,
				end angle=270,
				x radius=0.25cm,
				y radius=1.25cm
				];
		\draw[thick, JK, name path=T1] (.25,1) node[right] {$T_1$} -- (-8.25,1);
        \draw[thick, JI, name path=T3a] (0.25,1.75) node[right] {$T_3$} -- (-1.5,0);
        \draw[thick, JI, dashed] (-2.5,2.5) -- (-1.5,0);
        \draw[thick, JI, name path=T3b] (-2.5,2.5) -- (-4.5,0);
        \draw[thick, JI, dashed] (-5.5,2.5) -- (-4.5,0);
        \draw[thick, JI, name path=T3c] (-5.5,2.5) -- (-7.5,0);
        \fill[M] (0,2.5) circle (4pt);
        \node[above] at (-7, 3) {$F_p\coloneqq \pi^{-1}(p)$};
        \path [name intersections={of=T1 and T3a, by={i1}}];
        \path [name intersections={of=T1 and T3b, by={i2}}];
        \path [name intersections={of=T1 and T3c, by={i3}}];

        \fill (i1) circle (3pt) node[above, xshift=-4pt] {$a_{13}$};
        \fill (i2) circle (3pt) node[above, xshift=-6pt] {$a_{13}s$};
        \fill (i3) circle (3pt) node[above, xshift=-8pt] {$a_{13}s^2$};
    \end{tikzpicture}
        \caption{Fiber}
        \label{fig:Mor_fiber}
    \end{subfigure}
    \caption{Morphisms between two generating $T_i$ objects.}
    \label{fig:Mor}
\end{figure}

The same pattern holds for arbitrary $T_i$ and $T_j$, and all these intersection points have degree 0. 
\begin{definition}
We label the $(\alpha+1)$st intersection point, counted from the $y\to\infty$ side of the cylindrical fiber, by $a_{ji}s^\alpha$.
\end{definition}
As we will see in Section~\ref{sect:KLRW}, this corresponds to a KLRW strand diagram from $j$ to $i$ with $\alpha$ dots.

\begin{remark}
  We write $a_{ji}$ for morphisms in $\Hom(T_i, T_j)$, following the common convention that morphisms are composed from right to left. 
  With this notation, an element of $\Hom(T_j, T_k) \otimes \Hom(T_i, T_j)$ appears as $a_{kj}\otimes a_{ji}$, which is notationally more convenient.
\end{remark}

\begin{theorem}
For any $i,j$, the morphism space is given by
\begin{equation}
  \Hom(T_i, T_j) \cong \Bbbk ^{\Z_{\ge 0}}=\bigoplus_{\alpha=0}^\infty \Bbbk\, a_{ji} s^\alpha.
\end{equation}
\end{theorem}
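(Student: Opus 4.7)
The plan is to compute $CF^\bullet(T_i,T_j)$ directly by positive wrapping, enumerate the resulting intersections in both the base and the fiber, and then check that each intersection sits in degree $0$.

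First I would carry out the base-level analysis. After applying a positive (counterclockwise) Hamiltonian wrap, the vertical line $T_i$ is dragged around until it is blocked by one of the two stops on the boundary of the base disc; it then meets $T_j$ transversely at a single point $p$, exactly as illustrated in Fig.~\ref{fig:Mor}. The uniqueness of $p$ reflects the fact that the complement of the stops, once cut along $T_j$, is simply connected, so no further base intersections appear for either sign of wrapping that respects the stops.

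Next I would analyze the fiber above $p$. The fiber is the cylinder $\Cx_y$ equipped with a single stop at $y=+\infty$, and each of $T_i$, $T_j$ restricts to a radial ray from $y=0$ to this stop. The key input is the monodromy of the fibration determined by \eqref{eq:Wx}: as the base curve of $T_i$ is dragged, the product $\prod_{k}(x-x_k)$ picks up phase, so the brane condition $\mathcal{W}_{\mathbf{x}}\to +\infty$ forces the fiber ray to rotate by the opposite phase. Each additional positive base wrap then contributes a full $2\pi$ rotation of the fiber ray, producing the lifts visible in Fig.~\ref{fig:Mor_fiber}. Because the stop only sits at $y=+\infty$ (and not at $y=0$), these lifts are indexed by $\Z_{\ge 0}$ rather than $\Z$, and each lift meets the radial $T_j$-ray transversely in exactly one point, which I label $a_{ji}s^\alpha$ for $\alpha\in\Z_{\ge 0}$.

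Finally, I would verify the grading claim. Since every $T_k$ has parallel base shape and the same radial fiber shape, they admit compatible grading lifts coming from the vertical base direction and the radial fiber direction. At each intersection $a_{ji}s^\alpha$, the phase accumulated by $\alpha$ fiber rotations is exactly cancelled by the phase generated by the wrapping Hamiltonian, so the Maslov index reduces to the contribution of the single transverse base intersection, which is $0$. Combined with the intersection count above, this yields the asserted vector space isomorphism.

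The main obstacle is the monodromy bookkeeping in the fiber step: one must track the phase of $\prod_{k}(x-x_k)$ along the entire wrapped base path, parallel-transport the fiber ray accordingly, and count the resulting lifts precisely---neither double-counting lifts that close up under the cylinder's $\Z$-action nor missing those truncated on the $y=0$ side by the stop configuration. Once this phase computation is set up carefully, both the $\Z_{\ge 0}$-indexing and the vanishing of Maslov indices for all $a_{ji}s^\alpha$ drop out of the same calculation.
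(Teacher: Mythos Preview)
Your approach matches the paper's: the theorem is stated there without a formal proof, relying instead on the positive-wrapping description preceding it (one transverse base intersection $p$, with $\Z_{\ge 0}$-many fiber intersections above it, all in degree $0$, as in Fig.~\ref{fig:Mor}). Your outline --- wrap, count in the base, count in the fiber, verify grading --- is exactly that argument made explicit.

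One point of confusion to clean up: the sentence ``each additional positive base wrap then contributes a full $2\pi$ rotation of the fiber ray'' misattributes the source of the $\Z_{\ge 0}$. There are no additional base wraps --- the two stops on the boundary of the base disc block the base wrapping after a single pass, which is why there is exactly one base intersection. The infinite family in the fiber comes from wrapping \emph{in the fiber itself}: the cylinder $\Cx_y$ carries only one stop (at $y=\infty$), so the Reeb flow there is unbounded in one direction, and the wrapped fiber ray spirals indefinitely toward $y=0$, meeting the other radial ray once per turn. You actually say this correctly in the next sentence (``because the stop only sits at $y=+\infty$ \ldots''), so the fix is just to delete the monodromy explanation for the $\Z_{\ge 0}$ count and keep the one-stop explanation. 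The monodromy of $\prod_k(x-x_k)$ is real and does govern how the fiber stop moves as $x$ varies, but that enters in the disc counts for $\mu^2$ (cf.\ the paper's Fig.~\ref{fig:pq}), not in the enumeration of generators here.
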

\begin{definition}\label{def:spq}
  We will use the following shorthand notations:
  \begin{itemize}
    \item $a_{ji}\coloneqq a_{ji}s^0$, for $i\ne j$
    \item $s_i^\alpha\coloneqq a_{ii}s^\alpha$, $s_i\coloneqq a_{ii}s^1$, $e_i\coloneqq a_{ii}s^0$,
    \item $p_i\coloneqq a_{i,i-1}$, $q_i\coloneqq a_{i,i+1}$.
  \end{itemize}
\end{definition}

\subsection{Structure maps}
\subsubsection{Composition of morphisms}
In the Fukaya category, the structure maps
\begin{equation}
  \mu^d\colon \Hom (X_{d-1},X_d)\otimes\cdots\otimes\Hom (X_0,X_1)\to \Hom (X_0,X_d)[2-d]
\end{equation}
are defined by counting pseudoholomorphic polygons bounded by the objects $X_0,\dots,X_d$.
They satisfy the $A_\infty$-relations
\begin{equation}
    \sum_{m,n}
    (-1)^{\maltese_n}
    \mu^{d-m+1}
    (a_d, \dots, a_{n+m+1}, 
     \mu^m(a_{n+m}, \dots, a_{n+1}), 
     a_n, \dots, a_1)
    = 0
\end{equation}
for all composable morphisms $a_i \in \Hom(X_{i-1}, X_i)$, where $\maltese_n=|a_1|+\cdots+|a_n|-n$ is the Koszul sign convention.

It is known that when restricted to the generating objects $\{T_i\}$, the compositions $\mu^d$ satisfy relations that coincide with those of the Khovanov--Lauda--Rouquier--Webster (KLRW) algebra \cite{KL1, KL2, Rouquier, KLRW}.
Namely,
\begin{theorem}
\cite{ADLSZ}
For the generators $T_i$, one has
\[
  \mu^d = 0 \quad \text{for } d \ne 2,
\]
while the binary product $\mu^2$ is given by
\begin{equation}
  \mu ^2 (  a_{kj} s^{\beta}, a_{ji}s^{\alpha} ) = a_{ki} s^{\beta+\alpha +\delta (i,j,k)}
  \label{eq:mu2}
\end{equation}
where
\begin{align}
\delta(i,j,k)
&=
\frac{1}{2}\left( |i-j|+|j-k|-|i-k| \right)\notag\\
&=
\begin{cases}
0, & (i-j)(j-k)\ge 0,\\
\min\{|i-j|,|j-k|\}, & (i-j)(j-k)<0.
\end{cases}
  \label{eq:deltaijk}
\end{align}
\end{theorem}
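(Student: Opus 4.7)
The proof naturally splits into two parts: the vanishing of all $\mu^d$ with $d\neq 2$, and the explicit formula for $\mu^2$.

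For the vanishing, the plan is to use a degree argument. Every generator $a_{ji}s^{\alpha}$ lies in degree $0$, so on the generating subalgebra every morphism is of degree zero. Since $\mu^d$ carries cohomological degree $2-d$, its output on any string of generators must have degree $2-d$. For $d\geq 3$ this is negative and for $d=1$ it is strictly positive, and in either case no morphisms of that degree exist in $\Hom(T_i,T_j)$. Hence $\mu^d=0$ automatically for $d=1$ and $d\geq 3$. The only remaining possibility is $d=0$, the curvature term; this one has to be argued geometrically, but in our LG setup the Lagrangians $T_i$ are exact admissible noncompact $T$-branes whose primitives can be chosen to vanish, so there is no disk bubble contributing, and the category is honestly (uncurved) $A_\infty$.

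For the $\mu^2$ formula, the approach is to count holomorphic triangles bounded by $T_i,T_j,T_k$ after positively wrapping the first input in the Floer-theoretic sense. The strategy is to decompose the count into base and fiber contributions using the product structure of $\pi\colon\C_x\times\Cx_y\to\C$. In the base, three positively wrapped vertical lines meet in a unique triangle whose vertices are the pairwise intersection points, so the base contribution supplies the factor $a_{ki}$ and fixes the output ``strand type.'' In the cylindrical fiber, over that base triangle the three Lagrangians are horizontal lines at heights indexed by $\alpha$, $\beta$, and the output index; counting strip-like triangles that bound these lines while winding an appropriate number of times around the stop at $y=\infty$ produces outputs of the form $a_{ki}s^{\alpha+\beta+c}$. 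The additive contributions $\alpha+\beta$ are immediate from the way dots compose. The nontrivial quantity is the extra shift $c=\delta(i,j,k)$, which is determined by how the middle Lagrangian $T_j$ forces the fiber triangle to wrap more when $j$ lies strictly between $i$ and $k$.

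The main technical obstacle is therefore verifying the case analysis for $\delta(i,j,k)$. The plan here is to split into two geometric regimes according to the sign of $(i-j)(j-k)$. When $j$ is not strictly between $i$ and $k$ (so $(i-j)(j-k)\geq 0$), the middle puncture lies outside the base triangle, so up to isotopy the fiber triangle has no forced wrapping and $\delta=0$. When $j$ lies strictly between, the fiber Lagrangian over $T_j$ sits between those over $T_i$ and $T_k$, and each pass the triangle has to make around the $y=\infty$ stop contributes an additional $s$; a direct inspection of the possible triangles shows that the minimum number of forced passes equals $\min(|i-j|,|j-k|)$. At this point one also has to rule out additional triangles (no extra signed cancellations) and confirm the $A_\infty$-relations against the KLRW algebra relations, which both provide a consistency check and realize the statement already established in \cite{ADLSZ}.

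Finally, the plan is to verify the formula against several small cases, such as $\mu^2(p_{i+1},q_i)$, $\mu^2(q_i,p_{i+1})$ and compositions involving dotted elements, and cross-check with the KLRW relations recalled in Section~\ref{sect:KLRW}; this both tests the $\delta$ case analysis and makes the identification of the generating subcategory with the KLRW algebra tautological from the $\mu^2$ formula.
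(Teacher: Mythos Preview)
Your overall strategy matches the paper's: a degree argument for the vanishing of $\mu^d$ with $d\ne 2$, followed by a direct count of holomorphic triangles decomposed into base and fiber, with the extra dots arising because the base boundary winds around punctures and, via the superpotential~\eqref{eq:Wx}, forces the fiber boundary to rotate relative to the stop. The paper only illustrates this by two examples ($\mu^2(s_1,a_{12})=a_{12}s$ and $\mu^2(p_2,q_1)=s_2$) and otherwise defers to~\cite{ADLSZ}, so your more systematic outline is in the same spirit.

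There is, however, a sign slip in your case analysis for $\delta(i,j,k)$. You write that $(i-j)(j-k)\ge 0$ corresponds to ``$j$ not strictly between $i$ and $k$,'' but it is the reverse: $(i-j)(j-k)\ge 0$ is precisely the monotone case $i\le j\le k$ or $i\ge j\ge k$, where the path $T_i\to T_j\to T_k$ does not turn back, the base triangle encloses no punctures, and $\delta=0$. The turning case $(i-j)(j-k)<0$ means $j$ lies \emph{outside} the interval between $i$ and $k$; then the strand from $i$ to $j$ to $k$ crosses $\min(|i-j|,|j-k|)$ red strands twice, and each such double crossing (equivalently, each puncture enclosed by the base triangle) forces one extra fiber rotation and hence one extra dot. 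Your mechanism is right but attached to the wrong inequality; once you swap the two cases the argument goes through. As a minor aside, the degree argument already kills $\mu^0$ too, since the curvature would have to be a degree-$2$ element of $\Hom(T_i,T_i)$, which is concentrated in degree~$0$; the separate appeal to exactness is not needed.
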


\begin{proof}
These can be proved by a direct count of holomorphic discs. For example,

\begin{figure}[H]
    \centering
	\begin{subfigure}[b]{0.49\textwidth}
    \centering
    \begin{tikzpicture}[scale=1.3]
      \foreach \x in {-0.5,0.5,1.5}{
    \node[text=M,scale=1.5] at (\x,0) {$\ast$};
  }
  \draw[thick, JI, name path=T3, line width=1pt] (2,-1)--(-0.5,1.5)node[above,yshift=5pt] {$T_2$};
  \draw[thick, JY, name path=T1, line width=1pt] (0,-1.5) --(0,1.5)node[above,yshift=5pt]{$T_1$};
  \draw[thick, JK, name path=T1', line width=1pt] (-0.5,-1.5)--(1,1.5)node[above,yshift=5pt] {$T_1$};
        \path [name intersections={of=T1 and T3, by={i1}}];
        \path [name intersections={of=T1' and T3, by={i2}}];
        \path [name intersections={of=T1' and T1, by={i3}}];
          \begin{pgfonlayer}{background}
            \fill[JO!30, fill opacity=0.35] (i1) -- (i2) -- (i3) -- cycle;
        \end{pgfonlayer}
  \fill[name intersections={of=T1 and T3}] (intersection-1) circle (1.5pt);
  \fill[name intersections={of=T1 and T1'}] (intersection-1) circle (1.5pt)node[right]{\small$s$};
  \fill[name intersections={of=T1' and T3}] (intersection-1) circle (1.5pt)node[right]{\small$s$};
  \end{tikzpicture}
  \caption{Base}
  \end{subfigure}
  \begin{subfigure}[b]{0.49\textwidth}
  \centering
	\begin{tikzpicture}[scale=.7]
		\draw (-8.5,2.5) -- (0,2.5);
		\draw (-8.5,0) -- (0,0);
		\draw (-8.5,1.25) ellipse (0.25 cm and 1.25 cm);
		\draw (0,0) arc[
				start angle=-90,
				end angle=90,
				x radius=0.25cm,
				y radius=1.25cm
				];
	    \draw[dashed] (0,2.5) arc[
				start angle=90,
				end angle=270,
				x radius=0.25cm,
				y radius=1.25cm
				];
        
        \draw[thick, JK!50, dashed] (-1.75,0) -- (-1.25,2.5);
        \draw[thick, JK!50, dashed] (-3.75,2.5) -- (-4.25,0);
        \draw[thick, JK!50, dashed] (-6.25,2.5) -- (-6.75,0);

        \draw[thick, JI, name path=ji] (0.098,2.4) node[right] {$T_2$} -- (-2.5,0);
        \draw[thick, JI!50, dashed] (-3,2.5) -- (-2.5,0);
        \draw[thick, JI] (-3,2.5) -- (-5,0);
        \draw[thick, JI!50, dashed] (-5.5,2.5) -- (-5,0);
        \draw[thick, JI, name path=T3c] (-5.5,2.5) -- (-7.5,0);

        \draw[thick, JK] (0.24,0.9) node[right,yshift=-3pt] {$T_1$}  -- (-1.25,2.5);
        \draw[thick, JK, name path=jk] (-1.75,0) -- (-3.75,2.5);
        \draw[thick, JK] (-4.25,0) -- (-6.25,2.5);
        \draw[thick, JK] (-6.75,0) -- (-8.2895,1.9244);
        \fill[M] (0,2.5) circle (3pt);
	    	\draw[thick, JY, name path=jy] (.2498,1.3) node[right] {$T_1$} -- (-8.2502,1.3);

        \path [name intersections={of=ji and jy, by={i1}}];
        \path [name intersections={of=jy and jk, by={i2}}];
        \path [name intersections={of=jk and ji, by={i3}}];
        \begin{pgfonlayer}{background}
            \fill[JO!33, fill opacity=0.35] (i1) -- (i2) -- (i3) -- cycle;
        \end{pgfonlayer}
        \fill (i1) circle (2.5pt);
        \fill (i2) circle (2.5pt) node[above, xshift=3pt] {\small$s$};
        \fill (i3) circle (2.5pt) node[left] {\small$s$};
    \end{tikzpicture}
        \caption{Fiber}
    \end{subfigure}
    \caption{Proof of $\mu^2(s_1,a_{12})=a_{12}s$.}
\end{figure}
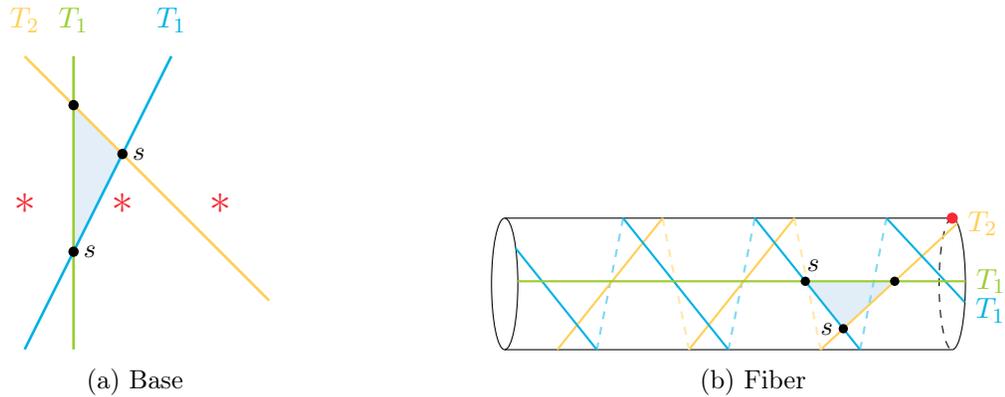

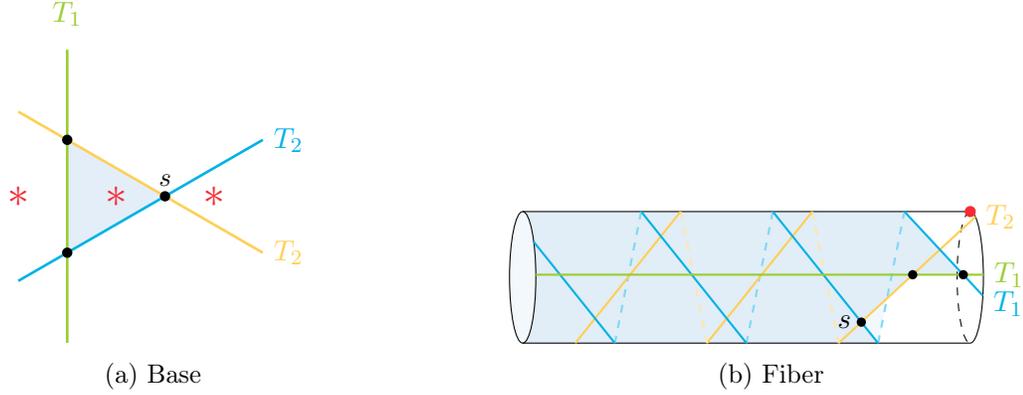
\begin{figure}[H]
    \centering
	\begin{subfigure}[b]{0.49\textwidth}
    \centering
    \begin{tikzpicture}[scale=1.3]
      \foreach \x in {-0.5,0.5,1.5}{
    \node[text=M,scale=1.5] at (\x,0) {$\ast$};
  }
  \draw[thick, JI, name path=T3, line width=1pt] (2,-0.57735)node[right] {$T_2$}--(-0.5,0.86603);
  \draw[thick, JY, name path=T1, line width=1pt] (0,-1.5) --(0,1.5)node[above,yshift=5pt]{$T_1$};
  \draw[thick, JK, name path=T1', line width=1pt] (-0.5,-0.86603)--(2,0.57735)node[right] {$T_2$};
        \path [name intersections={of=T1 and T3, by={i1}}];
        \path [name intersections={of=T1' and T3, by={i2}}];
        \path [name intersections={of=T1' and T1, by={i3}}];
          \begin{pgfonlayer}{background}
            \fill[JO!33, fill opacity=0.33] (i1) -- (i2) -- (i3) -- cycle;
        \end{pgfonlayer}
  \fill[name intersections={of=T1 and T3}] (intersection-1) circle (1.5pt);
  \fill[name intersections={of=T1 and T1'}] (intersection-1) circle (1.5pt);
  \fill[name intersections={of=T1' and T3}] (intersection-1) circle (1.5pt)node[above]{\small$s$};
  \end{tikzpicture}
  \caption{Base}
  \end{subfigure}
  \begin{subfigure}[b]{0.49\textwidth}
  \centering
	\begin{tikzpicture}[scale=.7]
		\draw (-8.5,2.5) -- (0,2.5);
		\draw (-8.5,0) -- (0,0);
		\draw (-8.5,1.25) ellipse (0.25 cm and 1.25 cm);
		\draw (0,0) arc[
				start angle=-90,
				end angle=90,
				x radius=0.25cm,
				y radius=1.25cm
				];
	    \draw[dashed] (0,2.5) arc[
				start angle=90,
				end angle=270,
				x radius=0.25cm,
				y radius=1.25cm
				];

        \draw[thick, JK!50, dashed, name path=jk1] (-1.75,0) -- (-1.25,2.5);
        \draw[thick, JK!50, dashed] (-3.75,2.5) -- (-4.25,0);
        \draw[thick, JK!50, dashed] (-6.25,2.5) -- (-6.75,0);

        \draw[thick, JI, name path=ji] (0.098,2.4) node[right] {$T_2$} -- (-2.5,0);
        \draw[thick, JI!50, dashed] (-3,2.5) -- (-2.5,0);
        \draw[thick, JI] (-3,2.5) -- (-5,0);
        \draw[thick, JI!50, dashed] (-5.5,2.5) -- (-5,0);
        \draw[thick, JI, name path=T3c] (-5.5,2.5) -- (-7.5,0);
        \draw[thick, JK, name path=jk0] (0.24,0.9) node[right,yshift=-3pt] {$T_1$}  -- (-1.25,2.5);
        \draw[thick, JK, name path=jk] (-1.75,0) -- (-3.75,2.5);
        \draw[thick, JK] (-4.25,0) -- (-6.25,2.5);
        \draw[thick, JK] (-6.75,0) -- (-8.2895,1.9244);
        \fill[M] (0,2.5) circle (3pt);
	    	\draw[thick, JY, name path=jy] (.2498,1.3) node[right] {$T_1$} -- (-8.2502,1.3);

        \path [name intersections={of=ji and jy, by={i1}}];
        \path [name intersections={of=jy and jk0, by={i2}}];
        \path [name intersections={of=jk and ji, by={i3}}];
        \path [name intersections={of=jk1 and ji, by={i4}}];
        \begin{pgfonlayer}{background}
        \fill[JO!33, fill opacity=0.35] (-1.25,2.5) -- (i2) -- (i1) -- (i3) -- (-1.75,0) -- (-8.5,0) arc[
				start angle=-90,
				end angle=90,
				x radius=0.25cm,
				y radius=1.25cm
				] -- cycle;
        \fill[JO!13, fill opacity=0.35] (-8.5,1.25) ellipse (0.25 cm and 1.25 cm);
        \fill[JO!13, fill opacity=0.35] (i3) -- (i4) -- (-1.75,0) -- cycle;
        \end{pgfonlayer}
        \fill (i1) circle (2.5pt);
        \fill (i2) circle (2.5pt);
        \fill (i3) circle (2.5pt) node[left] {$s$};
    \end{tikzpicture}
        \caption{Fiber}
    \end{subfigure}
    \caption{Proof of $\mu^2(p_2,q_1)=s_2$.}
    \label{fig:pq}
\end{figure}

Note that in Fig.~\ref{fig:pq}, the boundary of the disc in the fiber must rotate by one full turn relative to the stop. This occurs because the projection of the boundary to the base winds once around a puncture; 
according to the superpotential \eqref{eq:Wx}, moving around a puncture induces an opposite rotation of the stop in the fiber. Hence, for the boundary to close up and actually bound a disc, it must acquire an additional rotation in the fiber that cancels the rotation of the stop.

\end{proof}
We will also write $a_2\cdot a_1\coloneqq \mu^2(a_2,a_1)$ whenever no confusion can arise.

\subsubsection{KLRW category $\mathcal{C}$ inside the Fukaya category}
\label{sect:KLRW}

The KLRW algebra admits a convenient diagrammatic presentation, 
where generators and their compositions are represented by strands, dots, and crossings.
In this paper, we restrict to the $\mathfrak{sl}_2$ case with $d=1$, 
for which the diagrams and relations is much simplified.

\begin{definition}
  Fix a collection $F$ of $n=|\mathbf{x}|$ red points on $\R$.
  The KLRW category $\mathcal{C}$ we consider is defined as follows:
  \begin{itemize}
    \item \textbf{Objects:} configurations of a single black point on $\R$, distinct from the points in $F$.
    \item \textbf{Morphisms:} generated by strand diagrams in the strip $\R \times [0,1]$, 
    with no horizontal or vertical tangencies and only generic intersections.
    Black strands may carry dots as decorations.
    \item \textbf{Composition:} given by vertical stacking of diagrams. We read and compose the diagrams from the bottom to the top.
    \item \textbf{Relations:} diagrams are considered up to isotopy and subject to the local relations shown in Fig.~\ref{fig:KLRW}.
  \end{itemize}
\end{definition}

\begin{figure}[H]
  \centering
  \begin{subfigure}[b]{0.45\textwidth}
  \centering
	\begin{tikzpicture}[scale=.7]
  \foreach \x in {0.5,1.5,3.5,4.5,6.5,7.5} {
    \draw[M,line width=1pt] (\x,-1.5) -- (\x,1.5);
  }

  \draw[black,line width=1pt]
    (1,-1.5) -- (1,-1.3) to[out=90,in=-90] (0,0)
    to[out=90,in=-90] (1,1.3) -- (1,1.5);
  \node at (2.5,0) {$=$};
  \draw[black,line width=1pt]
    (4,-1.5) -- (4,1.5);
  \fill[black] (4,0) circle (3pt);
  \node at (5.5,0) {$=$};
  \draw[black,line width=1pt]
    (7,-1.5) -- (7,-1.3) to[out=90,in=-90] (8,0) 
    to[out=90,in=-90] (7,1.3) -- (7,1.5);
  \end{tikzpicture}
  \end{subfigure}
  \begin{subfigure}[b]{0.26\textwidth}
  \centering
	\begin{tikzpicture}[scale=.7]
  \foreach \x in {0.5,3.5} {
    \draw[M,line width=1pt] (\x,-1.5) -- (\x,1.5);
  }
  \draw[black,line width=1pt]
    (0,-1.5) -- (0,-.65) to[out=90,in=-90] (1,0.65) -- (1,1.5);
  \fill[black] (1,1.1) circle (3pt);
  \node at (2,0) {$=$};
  \draw[black,line width=1pt]
    (3,-1.5) -- (3,-.65) to[out=90,in=-90] (4,0.65) -- (4,1.5);
  \fill[black] (3,-1.1) circle (3pt);
  \end{tikzpicture}
  \end{subfigure}
  \begin{subfigure}[b]{0.26\textwidth}
  \centering
	\begin{tikzpicture}[scale=.7]
  \foreach \x in {0.5,3.5} {
    \draw[M,line width=1pt] (\x,-1.5) -- (\x,1.5);
  }
  \draw[black,line width=1pt]
    (1,-1.5) -- (1,-.65) to[out=90,in=-90] (0,0.65) -- (0,1.5);
  \fill[black] (0,1.1) circle (3pt);
  \node at (2,0) {$=$};
  \draw[black,line width=1pt]
    (4,-1.5) -- (4,-.65) to[out=90,in=-90] (3,0.65) -- (3,1.5);
  \fill[black] (4,-1.1) circle (3pt);
  \end{tikzpicture}
  \end{subfigure}
  \caption{Nontrivial KLRW relations in $\mathcal{C}_{\bullet,1,F}$}
  \label{fig:KLRW}
\end{figure}
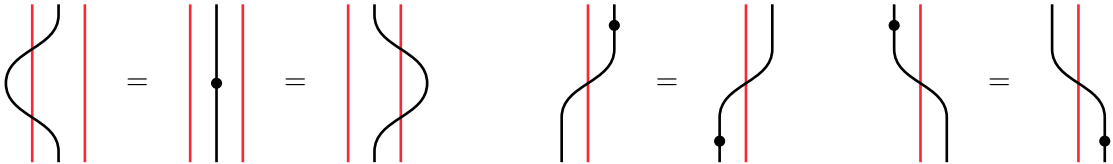

\begin{remark}
  More generally, one can define a KLRW category for any quiver $\Gamma$ with dimension vector $\vec{d}$. 
  The category $\mathcal{C}_{\Gamma,\vec{d},F}$ corresponds precisely to the Fukaya category of the multiplicative Coulomb branch $\M(\Gamma,\vec{d})$ for ADE type Lie algebras. 
  The simplified case considered above is $\mathcal{C}_{\bullet,1,F}$, which corresponds to Aganagic's $\mathfrak{sl}_2$ theory with $d=1$. 
  We will recall the general definition when necessary.
\end{remark}

The embedding 
$\mathcal{C}\hookrightarrow \mathrm{Fuk}_{|||}(\M (\bullet,1), \mathcal{W}_{\mathbf{x}})$ 
is defined as follows:
\begin{itemize}
  \item The red points $F$ correspond to the punctures $\mathbf{x}$ on the base of $\M (\bullet,1)$. For example, we can choose $F=\mathrm{Re}(\mathbf{x})$.  
  
  \item \textbf{Objects:} the position of the black point specifies the location of the brane $T_i$. For instance, the KLRW object shown at the top of Fig.~\ref{fig:EmbOb} corresponds to $T_1$ in the Fukaya category.
  
  \item \textbf{Morphisms:} a black strand connecting $T_i$ to $T_j$, which does not cross any red strand more than once and is decorated with $\alpha$ dots, corresponds to the morphism $a_{ji}s^\alpha\in\Hom(T_i,T_j)$ in the Fukaya category.
  
  \item \textbf{Composition:} one can directly verify that the composition in the KLRW category, under the correspondence above, reproduces the structure maps~(\ref{eq:mu2}). Some relations follow automatically from diagram isotopy, while others arise from the defining relations shown in Fig.~\ref{fig:KLRW}.
\end{itemize}

\begin{figure}[H]
    \centering
    \begin{subfigure}[b]{0.49\textwidth}
    \centering
    \begin{tikzpicture}[scale=1, every node/.style={scale=1}, line width=1pt]
		\draw[gray] (-2,3) -- (2,3);
    \foreach \x in {-1.2, -0.4, 0.4, 1.2} {
        \fill[M] (\x,3) circle (2pt);
      }
    \fill[black] (-0.8,3) circle (2pt);
    \draw[<->] (0,2.5) -- (0,1.5);
     \draw[black,thick] (0,0) ellipse [x radius=2, y radius=1];

      \foreach \x in {-1.2, -0.4, 0.4, 1.2} {
        \node[text=M,scale=1.5] at (\x,0) {$*$};
      }

      \begin{scope}
        \clip (0,0) ellipse [x radius=2, y radius=1];
        \fill[M] (-1.97,0) ellipse [x radius=0.08, y radius=0.2];
        \fill[M] ( 1.97,0) ellipse [x radius=0.08, y radius=0.2];
      \end{scope}

      \foreach [count=\i] \x in {-0.8} {
        \pgfmathsetmacro{\ymax}{sqrt(1 - (\x*\x)/4)}
        \draw[JK,thick] (\x,-\ymax) -- (\x,\ymax);
        \node[JK,above=1pt] at (\x,\ymax) {$T_{1}$};
      }
    \end{tikzpicture} 
    \caption{Objects}
    \label{fig:EmbOb}
    \end{subfigure}
    \begin{subfigure}[b]{0.49\textwidth}
    \centering
    \begin{tikzpicture}[scale=1, every node/.style={scale=1}, line width=1pt]
    \foreach \x in {-1.2, -0.4, 0.4, 1.2} {
      \draw[M,line width=1pt] (\x,-1) -- (\x,1);
    }

    \draw[black,line width=1pt]
      (0.8,-1) -- (0.8,-0.4)
      to[out=90,in=-90] (-0.8,0.8)
      -- (-0.8,1);
    \fill[black] (0.8,-0.5) circle (2pt);
    \fill[black] (0.8,-0.8) circle (2pt);
    \draw[<->] (0,-1.5) -- (0,-2.5);
    \node[black] at (0,-3) {$a_{13}s^2\in\Hom (T_3,T_1)$};
    \end{tikzpicture}
    \caption{Morphisms}
    \label{fig:EmbMor}
    \end{subfigure}
    \caption{Illustration of the embedding of the KLRW category $\mathcal{C}_{\bullet,1,F}$ in $\mathrm{Fuk}_{|||}(\M (\bullet,1), \mathcal{W}_{\mathbf{x}})$}
    \label{fig:Emb}
\end{figure}
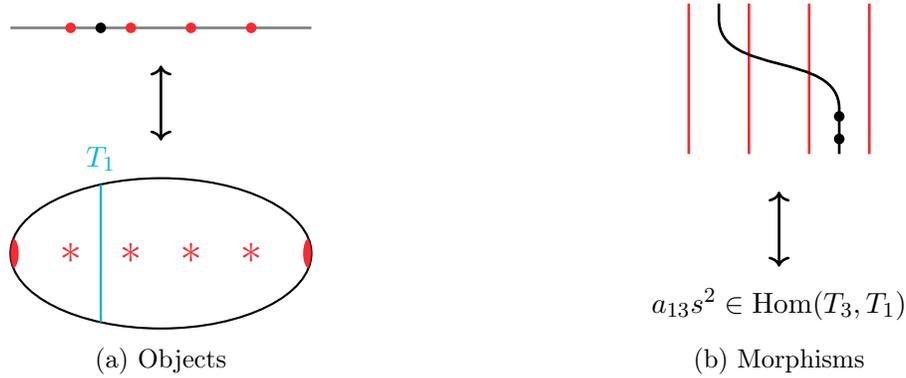

\subsection{General objects from generators}
\label{sect:Ibrane}

Besides the generating Lagrangians $T_i$, there exist other objects in the Fukaya category 
which appear as bounded twisted complexes built from the $T_i$. 
For such a twisted complex $(X^\bullet,\delta)$, the bounding cochain $\delta$ satisfies the Maurer--Cartan equation
\begin{equation}
  \mu^1_{\Sigma\mathcal{C}}(\delta)+\mu^2_{\Sigma\mathcal{C}}(\delta,\delta)+\mu^3_{\Sigma\mathcal{C}}(\delta,\delta,\delta)+\cdots = 0.
\end{equation}
where $\mu^d_{\Sigma\mathcal{C}}$ is the induced structure map on the additive enlargement defined in \cite[\S 3k]{Seidel-book}.
In our setting, since $\mu^d=0$ for $d\ne 2$, this reduces to the simple condition
\[
  \mu^2(\delta,\delta)=\delta^2=0.
\]
Hence $(X^\bullet,b)$ is nothing but an ordinary chain complex.

In \cite{Elise}, it is proved that all $T$-brane objects associated with curves without closed components can be generated from the basic objects $T_i$ by a standard procedure. More precisely, it means
\begin{equation}
  \mathrm{Tw}^b(\mathcal{C})\cong \mathrm{Fuk}_{|||}(\M (\bullet,1),\mathcal{W}_{\mathbf{x}})
\end{equation}
where in the latter we implicitly take all twisted complexes, which is essentially taking the pre-triangulated envelope since our category already has the idempotents $e_i$.

In the category $\mathrm{Tw}^b(\mathcal{C})$, the structure maps will be deformed by the bounding cochain by
\begin{equation}
\mu_{\delta}^d(a_d,\ldots,a_1)
  =
  \sum_{i_0,\ldots,i_d \ge 0}
  \mu^{d+i_0+\cdots+i_d}_{\Sigma \mathcal{C}}
  \big(
    \delta_d^{\otimes i_d}, a_d,
    \delta_{d-1}^{\otimes i_{d-1}}, a_{d-1},\dots,
    a_1, \delta_0^{\otimes i_0}
  \big).
\end{equation}
since only $\mu^2\ne 0$, the only nontrivial $\mu_\delta^d$ are

\begin{align}
&\mu _{\delta}^{1}\left( a \right) =(-1)^{|a|}\mu ^2\left( \delta,a \right) -\mu ^2\left( a,\delta \right) ,
\\
&\mu _{\delta}^{2}\left( a_2,a_1 \right) =(-1)^{|a_1|}\mu ^2\left( a_2,a_1 \right) .
\end{align}

Here $\mu^2$ on the RHS denotes the na\"ive composition in which the grading is ignored, that is, all morphisms are treated as of degree~0. We will always write $a_2\cdot a_1=\mu^2(a_2,a_1)$ for the na\"ive composition without adding any signs.

\begin{remark}
  The signs here follow strictly from Seidel's book \cite[\S 3k]{Seidel-book}, where the sign $(-1)^\triangleleft$ before each term is given by $\triangleleft=\sum_{p<q}|\phi^{i_p,i_{p-1}}_p|\cdot (|x^{i_q,i_{q-1}}_q|-1)$ for morphisms 
  \[a^{i_qi_{q-1}}_q=\phi^{i_qi_{q-1}}_q\otimes x^{i_qi_{q-1}}_q\in\mathrm{Hom}_{\Bbbk}(V^{i_{q-1}}_{q-1},V^{i_q}_{q})\otimes \Hom_{\mathcal{A}}(X^{i_{q-1}}_{q-1}\otimes X^{i_q}_{q}),\]
  where the underlying space for each twisted complex is given by $\bigoplus_i V_i\otimes X_i$.
  
  For our category $\mathcal{C}$, all morphisms are of degree 0, thus all $|x|=0$, and $|\phi|=|a|$, so we have the $\triangleleft$ before each $\mu^2(x_2,x_1)$ should be $|a_1|(0-1)=-|a_1|$.
\end{remark}

For the simplest examples, we can strech and resolve them into mapping cones, i.e. Lagrangian connected sums, of $T_i$. For example,

\begin{equation}\label{eq:stdmodT}
  \begin{tikzpicture}[scale=0.8]
  \foreach \x in {-2.5,-3.5,-4.5,-5.5}{
    \node[text=M,scale=1.5] at (\x,0) {$\ast$};
  }
  \draw[JK,line width=1pt]
    (-5,-1.5) -- (-5,0)
    arc[start angle=180,end angle=0,x radius=0.5cm,y radius=0.5cm]
    --  node[right,pos=0.75] {$T$} (-4,-1.5);

  \draw[<-] (-1.75,0) --node[above,midway] {$\mathrm{Cone}$} (-0.25,0);
  \foreach \x in {0.5,1.5,2.5,3.5}{
    \node[text=M,scale=1.5] at (\x,0) {$\ast$};
  }
  \draw[JK, name path=T1, line width=1pt]
    (1,-1.5) --node[left,pos=0.25] {$T_1$} (1,0) to[out=90,in=-135] (2,1.5);
    \draw[JY, name path=T2, line width=1pt]
    (2,-1.5) --node[right,pos=0.25] {$T_2$} (2,0) to[out=90,in=-45] (1,1.5);
    \fill[name intersections={of=T1 and T2}] (intersection-1) circle (2pt);

  \draw[<->] (4.5,0) -- (6,0);
  \node at (7,0) {$T_2$};
  \draw[->,shorten <=10pt, shorten >=10pt, line width=0.7pt] (7,0) --  (9,0);
  \node[above] at (8,0) {$\tikzset{every path/.append style={-}}
  \begin{tikzpicture}[scale=1.5, line width=0.7pt]
  \foreach \x in {0.05,0.15,0.25,0.35} {
      \draw[M] (\x,-0.1) -- (\x,0.1);
  }
  \draw[black]
      (0.2,-0.1) -- (0.2,-0.08)
      to[out=90,in=-90] (0.1,0.08)
      -- (0.1,0.1);
  \end{tikzpicture}$};
  \node at (9,0) {$T_1$};
  \end{tikzpicture}
\end{equation}

\begin{equation}\label{eq:beta-}
  \begin{tikzpicture}[scale=0.8]
  \foreach \x in {-2.5,-3.5,-4.5,-5.5}{
    \node[text=M,scale=1.5] at (\x,0) {$\ast$};
  }
  \draw[JK,line width=1pt]
    (-5,-1.5) -- (-5,0)
    arc[start angle=180,end angle=0,x radius=0.5cm,y radius=0.5cm]
    arc[start angle=180,end angle=360,x radius=0.5cm,y radius=0.5cm]
    --  node[left,pos=0.75] {$T$} (-3,1.5); 
  \draw[<-] (-1.75,0) --node[above,midway] {$\mathrm{Cone}$} (-0.25,0);
  \foreach \x in {0.5,1.5,2.5,3.5}{
    \node[text=M,scale=1.5] at (\x,0) {$\ast$};
  }
  \draw[JK, name path=T1, line width=1pt]
    (1,-1.5) --node[left,pos=0.25] {$T_1$} (1,0) to[out=90,in=-135] (2,1.5);
  \draw[JY, name path=T2, line width=1pt]
    (3,-1.5) to[out=135,in=-90]  (2,0) to[out=90,in=-45]node[right,pos=0.33] {$T_2$} (1,1.5);
  \draw[JK, name path=T3, line width=1pt]
    (2,-1.5) to[out=45,in=-90] (3,0)--node[right,pos=0.75] {$T_3$} (3,1.5);
  \fill[name intersections={of=T1 and T2}] (intersection-1) circle (2pt);
  \fill[name intersections={of=T3 and T2}] (intersection-1) circle (2pt);
  \draw[<->] (4.5,0) -- (6,0);
  \node at (7,0) {$T_2$};
  \draw[->,shorten <=10pt, shorten >=10pt, line width=0.7pt] (7,0) --  (9,1);
  \draw[->,shorten <=10pt, shorten >=10pt, line width=0.7pt] (7,0) --  (9,-1);
  \node[above, xshift=-3pt] at (8,0.5) {$\tikzset{every path/.append style={-}}
  \begin{tikzpicture}[scale=1.5, line width=0.7pt]
  \foreach \x in {0.05,0.15,0.25,0.35} {
      \draw[M] (\x,-0.1) -- (\x,0.1);
  }
  \draw[black]
      (0.2,-0.1) -- (0.2,-0.08)
      to[out=90,in=-90] (0.1,0.08)
      -- (0.1,0.1);
  \end{tikzpicture}$};
  \node[below, xshift=-9pt] at (8,-0.5) {$-$ $\tikzset{every path/.append style={-}}
  \begin{tikzpicture}[scale=1.5, line width=0.7pt]
  \foreach \x in {0.05,0.15,0.25,0.35} {
      \draw[M] (\x,-0.1) -- (\x,0.1);
  }
  \draw[black]
      (0.2,-0.1) -- (0.2,-0.08)
      to[out=90,in=-90] (0.3,0.08)
      -- (0.3,0.1);
  \end{tikzpicture}$};
  \node at (9,1) {$T_1$};
  \node at (9,0) {$\oplus$};
  \node at (9,-1) {$T_3$};
  \end{tikzpicture}
\end{equation}

\begin{equation}
  \begin{tikzpicture}[scale=0.8]
   \foreach \x in {-2.5,-3.5,-4.5,-5.5}{
    \node[text=M,scale=1.5] at (\x,0) {$\ast$};
  }
  \draw[JK,line width=1pt]
    (-3,-1.5) -- (-3,0)
    arc[start angle=0,end angle=180,x radius=0.5cm,y radius=0.5cm]
    arc[start angle=360,end angle=180,x radius=0.5cm,y radius=0.5cm]
    --  node[right,pos=0.75] {$T$} (-5,1.5);
  \draw[<-] (-1.75,0) --node[above,midway] {$\mathrm{Cone}$} (-0.25,0);
  \foreach \x in {0.5,1.5,2.5,3.5}{
    \node[text=M,scale=1.5] at (\x,0) {$\ast$};
  }
  \draw[JY, name path=T1, line width=1pt]
    (1,1.5) --node[left,pos=0.25] {$T_1$} (1,0) to[out=-90,in=135] (2,-1.5);
  \draw[JK, name path=T2, line width=1pt]
    (3,1.5) to[out=-135,in=90] node[left,pos=0.67] {$T_2$} (2,0) to[out=-90,in=45] (1,-1.5);
  \draw[JY, name path=T3, line width=1pt]
    (2,1.5) to[out=-45,in=90] (3,0)--node[right,pos=0.75] {$T_3$} (3,-1.5);
  \fill[name intersections={of=T1 and T2}] (intersection-1) circle (2pt);
  \fill[name intersections={of=T3 and T2}] (intersection-1) circle (2pt);

  \draw[<->] (4.5,0) -- (6,0);
  \node at (9,0) {$T_2$};
  \draw[->,shorten <=10pt, shorten >=10pt, line width=0.7pt] (7,1) --  (9,0);
  \draw[->,shorten <=10pt, shorten >=10pt, line width=0.7pt] (7,-1) --  (9,0);
  \node[above, xshift=2pt] at (8,0.5) {$-$ $\tikzset{every path/.append style={-}}
  \begin{tikzpicture}[scale=1.5, line width=0.7pt]
  \foreach \x in {0.05,0.15,0.25,0.35} {
      \draw[M] (\x,-0.1) -- (\x,0.1);
  }
  \draw[black]
      (0.1,-0.1) -- (0.1,-0.08)
      to[out=90,in=-90] (0.2,0.08)
      -- (0.2,0.1);
  \end{tikzpicture}$};
  \node[below, xshift=9pt] at (8,-0.5) {$\tikzset{every path/.append style={-}}
  \begin{tikzpicture}[scale=1.5, line width=0.7pt]
  \foreach \x in {0.05,0.15,0.25,0.35} {
      \draw[M] (\x,-0.1) -- (\x,0.1);
  }
  \draw[black]
      (0.3,-0.1) -- (0.3,-0.08)
      to[out=90,in=-90] (0.2,0.08)
      -- (0.2,0.1);
  \end{tikzpicture}$};
  \node at (7,1) {$T_1$};
  \node at (7,0) {$\oplus$};
  \node at (7,-1) {$T_3$};
  \end{tikzpicture}
\end{equation}

Besides $T$-branes, there exists another family of Lagrangian submanifolds of particular interest, 
whose projections to the fiber, illustrated in Fig.~\ref{fig:Ifiber}. 
We refer to these objects as \emph{$I$-branes}. Note that $I$-branes can end on punctures naturally, where the Lagrangian approaches $y\to \infty$, while $T$-branes cannot. Denote the $I$-brane in (\ref{eq:Ii}) connecting punctures $x_i$ and $x_{i+1}$ as $I_i$. These objects are \emph{dual} generators of the brane algebra \cite{Mina} to the $T$-branes in the sense that
\begin{equation}
  \Hom (T_i,I_j)=\delta_{ij}\Bbbk =\Hom(I_i,T_j).
\end{equation}

\begin{figure}[H]
    \centering
        \begin{subfigure}[b]{0.49\textwidth}
    \centering
	  \begin{tikzpicture}[xscale=0.5, yscale=0.5, every node/.style={scale=1}, line width=0.8pt]
      \draw (-8.5,2.5) -- (0,2.5);
      \draw (-8.5,0) -- (0,0);
      \draw (-8.5,1.25) ellipse (0.25 cm and 1.25 cm);
      \draw (0,0) arc[
          start angle=-90,
          end angle=90,
          x radius=0.25cm,
          y radius=1.25cm
          ];
        \draw[dashed] (0,2.5) arc[
          start angle=90,
          end angle=270,
          x radius=0.25cm,
          y radius=1.25cm
          ];
      \draw[thick, JK, name path=T1] (.25,1) -- (-8.25,1);
      \fill[M] (0,2.5) circle (4pt);
      \node[above] at (0,3) {$y=\infty$};
      \node[above] at (-8.5,3) {$y=0$};
    \end{tikzpicture}
    \caption{$T$-fiber}
    \end{subfigure}
    \begin{subfigure}[b]{0.49\textwidth}
    \centering
    	  \begin{tikzpicture}[xscale=0.5, yscale=0.5, every node/.style={scale=1}, line width=0.8pt]
      \draw (-8.5,2.5) -- (0,2.5);
      \draw (-8.5,0) -- (0,0);
      \draw (-8.5,1.25) ellipse (0.25 cm and 1.25 cm);
      \draw (0,0) arc[
          start angle=-90,
          end angle=90,
          x radius=0.25cm,
          y radius=1.25cm
          ];
        \draw[dashed] (0,2.5) arc[
          start angle=90,
          end angle=270,
          x radius=0.25cm,
          y radius=1.25cm
          ];
      \fill[M] (0,2.5) circle (4pt);
      \draw[JK, thick, in=-90, out=180] (.25,1.25) to (-3.247,2.5);
      \draw[JK, thick, in=90, out=180] (.25,1) to (-3.247,0);
      \draw[JK, thick, dashed] (-3.247,2.5) arc[
				start angle=90,
				end angle=270,
				x radius=0.25cm,
				y radius=1.25cm
				];
      \node[above] at (0,3) {$y=\infty$};
      \node[above] at (-8.5,3) {$y=0$};
    \end{tikzpicture}
    \caption{$I$-fiber}
    \label{fig:Ifiber}
    \end{subfigure}
    \caption{$T$-fiber and $I$-fiber}
\end{figure}
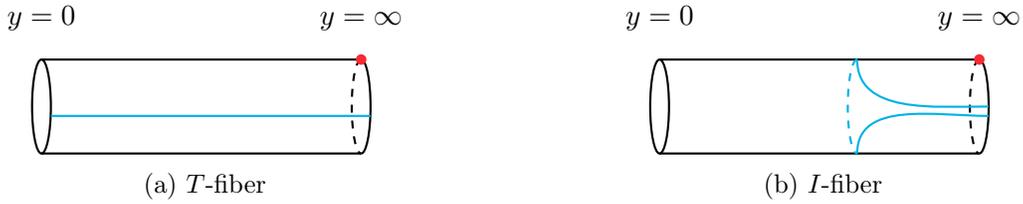

We recall some well-known results of $I$-branes as well.

\begin{equation}\label{eq:stdmodI}
  \begin{tikzpicture}[scale=0.8]
  \foreach \x in {0.5,1.5,2.5,3.5}{
    \node[text=M,scale=1.5] at (\x,0) {$\ast$};
  }
  \draw[JK,line width=1pt]
    (1.5,-1.5) --  node[right,midway] {$I$}  (1.5,0);
  \draw[<->] (4.5,0) -- (6,0);
  \node at (7,0) {$T_2$};
  \draw[->,shorten <=10pt, shorten >=10pt, line width=0.7pt] (7,0) --  (9,0);
  \node[above] at (8,0) {$\tikzset{every path/.append style={-}}
  \begin{tikzpicture}[scale=1.5, line width=0.7pt]
  \foreach \x in {0.05,0.15,0.25,0.35} {
      \draw[M] (\x,-0.1) -- (\x,0.1);
  }
  \draw[black]
      (0.2,-0.1) -- (0.2,-0.08)
      to[out=90,in=-90] (0.1,0.08)
      -- (0.1,0.1);
  \end{tikzpicture}$};
  \node at (9,0) {$T_1$};
  \end{tikzpicture}
\end{equation}

\begin{equation}
  \begin{tikzpicture}[scale=0.8]
  \foreach \x in {0.5,1.5,2.5,3.5}{
    \node[text=M,scale=1.5] at (\x,0) {$\ast$};
  }
  \draw[JK,line width=1pt]
    (1.5,0) --  node[above,midway] {$I$}  (2.5,0);
  \draw[<->] (4.5,0) -- (6,0);
  \node at (7,0) {$T_2$};
  \draw[->,shorten <=10pt, shorten >=10pt, line width=0.7pt] (7,0) --  (9,1);
  \draw[->,shorten <=10pt, shorten >=10pt, line width=0.7pt] (7,0) --  (9,-1);
  \node[above, xshift=-3pt] at (8,0.5) {$\tikzset{every path/.append style={-}}
  \begin{tikzpicture}[scale=1.5, line width=0.7pt]
  \foreach \x in {0.05,0.15,0.25,0.35} {
      \draw[M] (\x,-0.1) -- (\x,0.1);
  }
  \draw[black]
      (0.2,-0.1) -- (0.2,-0.08)
      to[out=90,in=-90] (0.1,0.08)
      -- (0.1,0.1);
  \end{tikzpicture}$};
  \node[below, xshift=-9pt] at (8,-0.5) {$-$ $\tikzset{every path/.append style={-}}
  \begin{tikzpicture}[scale=1.5, line width=0.7pt]
  \foreach \x in {0.05,0.15,0.25,0.35} {
      \draw[M] (\x,-0.1) -- (\x,0.1);
  }
  \draw[black]
      (0.2,-0.1) -- (0.2,-0.08)
      to[out=90,in=-90] (0.3,0.08)
      -- (0.3,0.1);
  \end{tikzpicture}$};
  \node at (9,1) {$T_1$};
  \node at (9,0) {$\oplus$};
  \node at (9,-1) {$T_3$};
  \draw[->,shorten <=10pt, shorten >=10pt, line width=0.7pt] (9,1) --  (11,0);
  \draw[->,shorten <=10pt, shorten >=10pt, line width=0.7pt] (9,-1) --  (11,0);
  \node[above, xshift=3pt] at (10,0.5) {$\tikzset{every path/.append style={-}}
  \begin{tikzpicture}[scale=1.5, line width=0.7pt]
  \foreach \x in {0.05,0.15,0.25,0.35} {
      \draw[M] (\x,-0.1) -- (\x,0.1);
  }
  \draw[black]
      (0.1,-0.1) -- (0.1,-0.08)
      to[out=90,in=-90] (0.2,0.08)
      -- (0.2,0.1);
  \end{tikzpicture}$};
  \node[below, xshift=3pt] at (10,-0.5) {$\tikzset{every path/.append style={-}}
  \begin{tikzpicture}[scale=1.5, line width=0.7pt]
  \foreach \x in {0.05,0.15,0.25,0.35} {
      \draw[M] (\x,-0.1) -- (\x,0.1);
  }
  \draw[black]
      (0.3,-0.1) -- (0.3,-0.08)
      to[out=90,in=-90] (0.2,0.08)
      -- (0.2,0.1);
  \end{tikzpicture}$};
  \node at (11,0) {$T_2$};
  \end{tikzpicture}
  \label{eq:Ii}
\end{equation}

\begin{remark}
  It is an interesting fact that (\ref{eq:stdmodT}) and (\ref{eq:stdmodI}) have the same resolution. The resulting complex is an example of \emph{standard modules} defined by Webster in \cite[\S 5]{Webster1}. This equivalence has both a disc-counting argument in \cite{Elise} and a geometric argument in \cite{op.213}.
\end{remark}

\section{Structure of the braiding functors}
Having established the structure of the category, we now proceed to discuss functors.

In general, an \emph{$A_\infty$-functor} between categories $\mathcal{A}$ and $\mathcal{B}$ consists of a map $F\colon\mathrm{Ob}\mathcal{A} \rightarrow \mathrm{Ob}\mathcal{B} $ and a series of maps $\{F^d\}$ for $d\ge 1$:
\begin{equation}
  F^d\colon \Hom_{\mathcal{A}}(X_{d-1},X_d)\otimes\cdots\otimes\Hom_{\mathcal{A}}(X_0,X_1)\to \Hom_{\mathcal{B}}(FX_0, FX_d)[1-d],
\end{equation}
where each $X_i\in\mathrm{Ob}\mathcal{A}$. The maps follow the constraints \cite[\S 1b]{Seidel-book}:

\begin{align}
&\sum_{r\ge1}
\sum_{\substack{i_1+\cdots+i_r=d}}
\mu_{\mathcal B}^r
\big(
  F^{i_r}(a_d,\dots,a_{d-i_r+1}),
  \dots,
  F^{i_1}(a_{i_1},\dots,a_1)
\big)\notag\\
&\qquad =
\sum_{m,n}
(-1)^{\maltese_n}
F^{d-m+1}
\big(
  a_d,\dots,a_{n+m+1},
  \mu_{\mathcal A}^m(a_{n+m},\dots,a_{n+1}),
  a_n,\dots,a_1
\big).\label{eq:functor}
\end{align}

Functors are composed by
\begin{align}
  &(G\comp F)^d(a_d,\dots,a_1)\notag\\
  &\qquad =\sum_{r\ge1}
\sum_{\substack{i_1+\cdots+i_r=d}}
G^r
\big(
  F^{i_r}(a_d,\dots,a_{d-i_r+1}),
  \dots,
  F^{i_1}(a_{i_1},\dots,a_1)
\big)\label{eq:funcomp}
\end{align}

Given an $A_\infty$-functor $F:\mathcal{A}\to\mathcal{B}$, one associates a 
$(\mathcal{B},\mathcal{A})$-bimodule by the Yoneda embedding of $F(X)$
\begin{equation}
  B_F(-,X)=\Hom_\mathcal{B}(-,F(X)).
\end{equation}
The structure maps $\mu_{B_F}^{0|1|s}$ coincide with the higher components $F^s$ of the functor, 
while the abstract realization of $F$ is given by the derived tensor functor
\[
F \simeq -\otimes^L_{\mathcal{A}} B_F.
\]

As mentioned in the introduction, given a set of punctures $\mathbf{x}$, the braid group $\mathrm{Br}_{|\mathbf{x}|}$ acts on $\M(\bullet, n)$ by braiding these punctures, which in turn induces an action on the corresponding Fukaya category.

\begin{equation}\label{eq:Br}
	\rho\colon \mathrm{Br}_{\left|\mathbf{x}\right|}\to \operatorname{Aut}(\mathrm{Fuk}_{|||}(\M (\Gamma,\vec{d}), \mathcal{W}_{\mathbf{x}}))
\end{equation}

The most essential part of this structure is the action of the generators of $\mathrm{Br}_{|\mathbf{x}|}$ and their inverses, and we can use \eqref{eq:funcomp} to get everything.
In what follows, we describe these functors in detail.

\subsection{Identity functor}

The simplest example of a braiding functor is the identity functor $\mathrm{id}=\rho(1)$
\begin{equation}
  \mathrm{id}\colon 
  \mathrm{Fuk}_{|||}(\M (\bullet,1),\mathcal{W}_{\mathbf{x}})
  \longrightarrow
  \mathrm{Fuk}_{|||}(\M (\bullet,1),\mathcal{W}_{\mathbf{x}}),
\end{equation}
which, by definition, acts trivially on the category. Explicitly,
on objects and morphisms we have
\begin{align}
  &\mathrm{id}(X)=X,\\
  &\mathrm{id}^1(a)=a,\\
  &\mathrm{id}^d=0,\qquad d\neq 1.
\end{align}
The $A_\infty$-functor equations~\eqref{eq:functor} are then automatically satisfied.

From the bimodule point of view, this functor corresponds to the 
\emph{diagonal bimodule} 
\begin{equation}
  \Delta(-,X)=\Hom (-,X),
\end{equation}
whose structure maps are the same as those of the category itself. 

\subsection{Negative braiding functor}

The \emph{negative braiding functor} $\beta_{i^-}=\rho(\sigma^{-1}_i)$ is the action of $\sigma^{-1}_i\in \mathrm{Br}_n$ under (\ref{eq:Br}), which clockwisely exchanges the punctures $x_i$ and $x_{i+1}$. We first specify its action on $\mathcal{C}$.

\subsubsection{Action on objects}
On the object level, as shown in Fig.~\ref{fig:beta-}, negative braiding $\beta_{i^-}$ twists $T_i$ into the object in (\ref{eq:beta-}), while leaving all other generators unchanged.

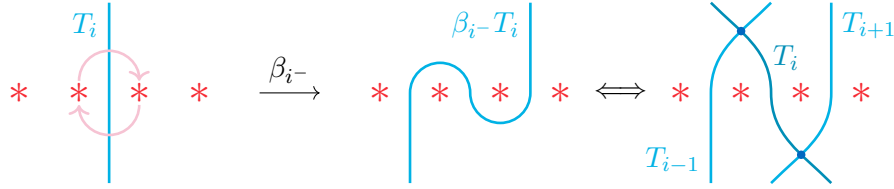
\begin{figure}[H]
  \centering
 \begin{tikzpicture}[scale=0.8]
  \foreach \x in {-5.5,-4.5,-3.5,-2.5,0.5,1.5,2.5,3.5}{
    \node[text=M,scale=1.5] at (\x,0) {$\ast$};
  }
  \draw[->] (-1.5,0) --node[above, midway]{$\beta_{i^-}$} (-0.5,0);
  \draw[JK,line width=1pt] (-4,-1.5)--node[left,pos=0.875] {$T_i$}(-4,1.5);
  \draw[JK,line width=1pt]
    (1,-1.5) -- (1,0)
    arc[start angle=180,end angle=0,x radius=0.5cm,y radius=0.5cm]
    arc[start angle=180,end angle=360,x radius=0.5cm,y radius=0.5cm] 
    --  node[left,pos=0.75] {$\beta_{i^-}T_i$} (3,1.5); 
  \draw[->,SA!50,line width=1pt] (-4.5,0.2) arc[start angle=180,end angle=0,x radius=0.5cm,y radius=0.5cm];
  \draw[->,SA!50,line width=1pt] (-3.5,-0.2) arc[start angle=0,end angle=-180,x radius=0.5cm,y radius=0.5cm];

  \node at (4.5,0) {$\iff$};
  \foreach \x in {5.5,6.5,7.5,8.5}{
    \node[text=M,scale=1.5] at (\x,0) {$\ast$};
  }
  \draw[JK, name path=T1, line width=1pt]
    (6,-1.5) --node[left,pos=0.25] {$T_{i-1}$} (6,0) to[out=90,in=-135] (7,1.5);
  \draw[JK!80!black, name path=T2, line width=1pt]
    (8,-1.5) to[out=135,in=-90]  (7,0) to[out=90,in=-45]node[right,pos=0.33] {$T_i$} (6,1.5);
  \draw[JK, name path=T3, line width=1pt]
    (7,-1.5) to[out=45,in=-90] (8,0)--node[right,pos=0.75] {$T_{i+1}$} (8,1.5);
  \fill[JO,name intersections={of=T1 and T2}] (intersection-1) circle (2pt);
  \fill[JO,name intersections={of=T3 and T2}] (intersection-1) circle (2pt);
 \end{tikzpicture}
 \caption{Negative braiding functor $\beta_{i^-}$ acting on $T_i$}
  \label{fig:beta-}
\end{figure}

\begin{theorem}\label{thm:obj}
  The negative braiding functor $\beta_{i^-}$ acts on generators $T_j$ by
  \begin{equation}
  \beta _{i^-}T_j=\begin{cases}
	T_j,&		i\ne j,\vspace{13pt}\\
	T_i\xrightarrow{\begin{bmatrix}
	\vcenter{\hbox{\tikzset{every path/.append style={-}}
  \begin{tikzpicture}[scale=1.5, line width=0.7pt]
  \foreach \x in {0.05,0.15,0.25,0.35} {
      \draw[red] (\x,-0.1) -- (\x,0.1);
  }
  \draw[black]
      (0.2,-0.1) -- (0.2,-0.08)
      to[out=90,in=-90] (0.1,0.08)
      -- (0.1,0.1);
  \end{tikzpicture}}}
	&-\vcenter{\hbox{\tikzset{every path/.append style={-}}
  \begin{tikzpicture}[scale=1.5, line width=0.7pt]
  \foreach \x in {0.05,0.15,0.25,0.35} {
      \draw[red] (\x,-0.1) -- (\x,0.1);
  }
  \draw[black]
      (0.2,-0.1) -- (0.2,-0.08)
      to[out=90,in=-90] (0.3,0.08)
      -- (0.3,0.1);
  \end{tikzpicture}}}
\end{bmatrix}}T_{i-1}\oplus T_{i+1}&		i=j.\\
\end{cases}
  \end{equation}
\end{theorem}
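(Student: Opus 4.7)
The plan is to split the theorem along the two cases stated and to combine, in each, the concrete description of the monodromy symplectomorphism with the Lagrangian-surgery decomposition already illustrated in equation~\eqref{eq:beta-}. The functor $\beta_{i^-}$ is implemented geometrically by the monodromy of the clockwise half-exchange of $x_i$ and $x_{i+1}$, and I will take a symplectomorphism representative compactly supported in a small tubular neighborhood of the segment from $x_i$ to $x_{i+1}$ in the base, with the induced rotation in each fiber dictated by the superpotential~\eqref{eq:Wx}.

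For $j\ne i$, the Lagrangian $T_j$ is a vertical segment between two consecutive punctures disjoint from this neighborhood, and its fiberwise lift sits in the portion of the cylinder left untouched by the fiber rotation; hence the symplectomorphism is literally the identity on $T_j$, and $\beta_{i^-}T_j=T_j$. For the case $j=i$, I would proceed in two steps. First, apply the monodromy to $T_i$ explicitly: the straight base segment gets dragged through the clockwise half-turn around $(x_i,x_{i+1})$, while the fiber profile acquires the compensating rotation around the stop forced by the superpotential, of exactly the kind used in the proof of Figure~\ref{fig:pq}. The result is the spiral Lagrangian drawn on the left side of Figure~\ref{fig:beta-}. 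Second, resolve the spiral using the Lagrangian-surgery procedure already recorded in~\eqref{eq:beta-}: stretching the spiral exhibits two transverse intersections, one with $T_{i-1}$ and one with $T_{i+1}$, and performing both surgeries decomposes the spiral as the twisted complex whose underlying objects are $T_i$ in one position and $T_{i-1}\oplus T_{i+1}$ in the next, with differential consisting of the two strand morphisms $a_{i-1,i}\in\Hom(T_i,T_{i-1})$ and $a_{i+1,i}\in\Hom(T_i,T_{i+1})$.

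The step I expect to be the most delicate is pinning down the relative sign in the differential. The two surgery intersection points arise from opposite sides of the spiral, so their contributions differ by an orientation sign; to match this with the $(-1)$ drawn in the theorem one must track the Seidel sign conventions for the additive enlargement recalled at the end of Section~\ref{sect:Ibrane}, in particular the $(-1)^{|a|}$ appearing in $\mu_\delta^1$. Since every morphism in $\mathcal{C}$ has degree $0$, these factors reduce to explicit combinatorial signs readable off the two local surgery pictures, yielding the claimed row $(a_{i-1,i},\,-a_{i+1,i})$. A useful consistency check is that no Maurer--Cartan condition is violated: the twisted complex has only two nonzero positions, so $\mu^2(\delta,\delta)=0$ is automatic, and it suffices to verify that the geometrically produced complex is the one written.
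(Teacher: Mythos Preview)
Your proposal is correct and follows essentially the same route as the paper. The paper does not give a separate formal proof of this theorem: it simply states that the monodromy leaves $T_j$ unchanged for $j\ne i$ and twists $T_i$ into the spiral of Figure~\ref{fig:beta-}, which was already resolved as the mapping-cone complex in~\eqref{eq:beta-}; your outline fills in precisely these two steps, with the added care of tracking the relative sign that the paper leaves implicit.
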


\subsubsection{Action on morphisms}
On the morphism level, $\beta_{i^-}^1$ acts trivially on $\Hom(T_j, T_k)$ unless $j=i$ or $k=i$,
\begin{itemize}
  \item For $a=a_{kj}s^\alpha\in\Hom(T_j,T_k)$ with $j,k\ne i$:
\end{itemize}
\begin{equation}\label{eq:beta1*}
  \beta_{i^-}^1(T_j\xrightarrow{s^\alpha}T_k)=T_j\xrightarrow{s^\alpha}T_k.
\end{equation}
where we write $s^\alpha$ on an arrow $T_i\to T_j$ to indicate the morphism contains $\alpha$ dots, i.e. $a_{ji}s^\alpha\in \Hom (T_i,T_j)$.

\begin{itemize}
  \item For $a=a_{ij}s^\alpha\in\Hom (T_j,T_i)$ with $j\ne i$:
\end{itemize}
\begin{figure}[H]
  \centering
 \begin{tikzpicture}[scale=0.8]
  \foreach \x in {-4.5,-3.5,-2.5,-1.5,1.5,2.5,3.5,4.5}{
    \node[text=M,scale=1.5] at (\x,0) {$\ast$};
  }
  \draw[->] (-0.5,0) --node[above, midway]{$\beta_{i^-}$} (0.5,0);
  \draw[JI, name path=Tj, line width=1pt] (-2,-1.5)node[below] {$T_j$}--(-2,0) to[out=90,in=-20](-4,1.5);
  \draw[JK, name path=Ti, line width=1pt] (-4,-1.5)node[below] {$T_i$}--(-4,0) to[out=90,in=-135](-3,1.5);
  \fill[name intersections={of=Tj and Ti}] (intersection-1) circle (2pt)node[yshift=7pt]{\scriptsize $a$};
  \draw[->,SA!50,line width=1pt] (-4.5,0.2) arc[start angle=180,end angle=0,x radius=0.5cm,y radius=0.5cm];
  \draw[->,SA!50,line width=1pt] (-3.5,-0.2) arc[start angle=0,end angle=-180,x radius=0.5cm,y radius=0.5cm];

  \draw[JK, name path=BTi, line width=1pt]
    (1,-1.5) --node[right,pos=-0.2] {$\beta_{i^-}T_i$} (1,0)
    arc[start angle=180,end angle=0,x radius=0.5cm,y radius=0.5cm]
    arc[start angle=180,end angle=360,x radius=0.5cm,y radius=0.5cm] 
    --   (3,1.5); 

  \draw[JI, name path=BTj, line width=1pt]
    (4,-1.5)--node[left,pos=-0.2] {$\beta_{i^-}T_j$}(4,0)to[out=90,in=-30](2.5,1.5);
  \fill[name intersections={of=BTj and BTi}] (intersection-1) circle (2pt)node[xshift=-15pt,yshift=-5pt]{\scriptsize $\beta_{i^-}^1a$};
  
  \node at (5.5,0) {$\iff$};
\begin{scope}[xshift=0.5cm]
  \foreach \x in {6.5,7.5,8.5,9.5}{
    \node[text=M,scale=1.5] at (\x,0) {$\ast$};
  }
  \draw[JK, name path=T1, line width=1pt]
    (6,-1.5) node[below] {$T_{i-1}$} -- (6,0) to[out=90,in=-135] (7,1.5);
  \draw[JK!80!black, name path=T2, line width=1pt]
    (8,-1.5) node[below] {$T_i$} to[out=135,in=-90] (7,0) to[out=90,in=-45] (6,1.5);
  \draw[JK, name path=T3, line width=1pt]
    (7,-1.5) node[below] {$T_{i+1}$} to[out=45,in=-90] (8,0) -- (8,1.5);
  \draw[JI, name path=T4, line width=1pt]
    (9,-1.5) node[below] {$T_j$} -- (9,0) to[out=90,in=-30] (7.5,1.5);
  \fill[JO,name intersections={of=T1 and T2}] (intersection-1) circle (2pt);
  \fill[JO,name intersections={of=T3 and T2}] (intersection-1) circle (2pt);
  \fill[name intersections={of=T4 and T3}] (intersection-1) circle (2pt)
       node[xshift=5pt,yshift=5pt]{\scriptsize $b$};
\end{scope}
 \end{tikzpicture}
 \caption{$\beta_{i^-}^1$ acting on $\Hom (T_j,T_i)$ with $j>i$}
 \label{fig:beta1a}
\end{figure}
The morphism $b$ in Fig.~\ref{fig:beta1a} have the same number of dots with $a$. Thus, we have $\beta_{i^-}^1(a)=b$ for $j>i$, which is

\begin{equation}\label{eq:TjTi}
  \begin{tikzpicture}
  \node[yshift=3] at (-3,-1) {$\beta_{i^-}^1(T_j\xrightarrow{s^\alpha}T_i)=$};
  \node at (0,-1) {$T_j$};
  \draw[->,shorten <=13pt, shorten >=13pt, line width=0.7pt] (0,-1) --node[above,pos=0.382]{\small $b=s^\alpha$}   (6,-1);
  \begin{scope}[xshift=-3cm]
  \node at (7,0) {$T_i$};
  \draw[->,shorten <=13pt, shorten >=13pt, line width=0.7pt] (7,0) --  (9,1);
  \draw[->,shorten <=13pt, shorten >=13pt, line width=0.7pt] (7,0) --  (9,-1);
  \node[above, xshift=-3pt] at (8,0.5) {$\tikzset{every path/.append style={-}}
  \begin{tikzpicture}[scale=1.5, line width=0.7pt]
  \foreach \x in {0.15,0.25} {
      \draw[M] (\x,-0.1) -- (\x,0.1);
  }
  \draw[black]
      (0.2,-0.1) -- (0.2,-0.08)
      to[out=90,in=-90] (0.1,0.08)
      -- (0.1,0.1);
  \end{tikzpicture}$};
  \node[below, xshift=-9pt] at (8,-0.5) {$-$ $\tikzset{every path/.append style={-}}
  \begin{tikzpicture}[scale=1.5, line width=0.7pt]
  \foreach \x in {0.15,0.25} {
      \draw[M] (\x,-0.1) -- (\x,0.1);
  }
  \draw[black]
      (0.2,-0.1) -- (0.2,-0.08)
      to[out=90,in=-90] (0.3,0.08)
      -- (0.3,0.1);
  \end{tikzpicture}$};
  \node at (9,1) {$T_{i-1}$};
  \node at (9,0) {$\oplus$};
  \node at (9,-1) {$T_{i+1}$};
  \draw[gray] (6.7,1.5)--(6.5,1.5)--(6.5,-1.5)--(6.7,-1.5);
  \draw[gray] (9.5,1.5)--(9.7,1.5)--(9.7,-1.5)--(9.5,-1.5);
  \end{scope}
  \end{tikzpicture}
\end{equation}

Similarly, we get for $j<i$,

\begin{equation}
  \begin{tikzpicture}
  \node[yshift=3] at (-3,1) {$\beta_{i^-}^1(T_j\xrightarrow{s^\alpha}T_i)=$};
  \node at (0,1) {$T_j$};
  \draw[->,shorten <=13pt, shorten >=13pt, line width=0.7pt] (0,1) --node[above,pos=0.382]{\small $s^\alpha$}   (6,1);

  \begin{scope}[xshift=-3cm]
  \node at (7,0) {$T_i$};
  \draw[->,shorten <=13pt, shorten >=13pt, line width=0.7pt] (7,0) --  (9,1);
  \draw[->,shorten <=13pt, shorten >=13pt, line width=0.7pt] (7,0) --  (9,-1);
  \node[above, xshift=-3pt] at (8,0.5) {$\tikzset{every path/.append style={-}}
  \begin{tikzpicture}[scale=1.5, line width=0.7pt]
  \foreach \x in {0.15,0.25} {
      \draw[M] (\x,-0.1) -- (\x,0.1);
  }
  \draw[black]
      (0.2,-0.1) -- (0.2,-0.08)
      to[out=90,in=-90] (0.1,0.08)
      -- (0.1,0.1);
  \end{tikzpicture}$};
  \node[below, xshift=-9pt] at (8,-0.5) {$-$ $\tikzset{every path/.append style={-}}
  \begin{tikzpicture}[scale=1.5, line width=0.7pt]
  \foreach \x in {0.15,0.25} {
      \draw[M] (\x,-0.1) -- (\x,0.1);
  }
  \draw[black]
      (0.2,-0.1) -- (0.2,-0.08)
      to[out=90,in=-90] (0.3,0.08)
      -- (0.3,0.1);
  \end{tikzpicture}$};
  \node at (9,1) {$T_{i-1}$};
  \node at (9,0) {$\oplus$};
  \node at (9,-1) {$T_{i+1}$};
  \draw[gray] (6.7,1.5)--(6.5,1.5)--(6.5,-1.5)--(6.7,-1.5);
  \draw[gray] (9.5,1.5)--(9.7,1.5)--(9.7,-1.5)--(9.5,-1.5);
  \end{scope}
  \end{tikzpicture}
\end{equation}

\begin{itemize}
  \item For $a=a_{ki}s^\alpha\in\Hom (T_i,T_k)$ with $k\ne i$:
\end{itemize}

\begin{figure}[H]
  \centering
 \begin{tikzpicture}[scale=0.8]
  \foreach \x in {-4.5,-3.5,-2.5,-1.5,1.5,2.5,3.5,4.5}{
    \node[text=M,scale=1.5] at (\x,0) {$\ast$};
  }
  \draw[->] (-0.5,0) --node[above, midway]{$\beta_{i^-}$} (0.5,0);
  \draw[JI, name path=Ti, line width=1pt] (-4,1.5)node[above] {$T_i$}--(-4,0) to[out=-90,in=160](-2,-2.5);
  \draw[JK, name path=Tj, line width=1pt] (-2,1.5)node[above] {$T_k$}--(-2,0) to[out=-90,in=45](-3,-2.5);
  \fill[name intersections={of=Tj and Ti}] (intersection-1) circle (2pt)node[yshift=7pt]{\scriptsize $a$};
  \draw[->,SA!50,line width=1pt] (-4.5,0.2) arc[start angle=180,end angle=0,x radius=0.5cm,y radius=0.5cm];
  \draw[->,SA!50,line width=1pt] (-3.5,-0.2) arc[start angle=0,end angle=-180,x radius=0.5cm,y radius=0.5cm];

  \draw[JI, name path=BTi, line width=1pt]
    (2,-2.5) to[out=135, in=-90](1,-1.0) -- (1,0)
    arc[start angle=180,end angle=0,x radius=0.5cm,y radius=0.5cm]
    arc[start angle=180,end angle=360,x radius=0.5cm,y radius=0.5cm] 
    -- node[left,pos=1.3] {$\beta_{i^-}T_i$}  (3,1.5); 

  \draw[JK, name path=BTj, line width=1pt]
    (1,-2.5)to[out=20,in=-90](4,0)--node[right,pos=1.3] {$\beta_{i^-}T_k$}(4,1.5);
  \fill[name intersections={of=BTj and BTi}] (intersection-1) circle (2pt)node[xshift=3pt, yshift=13pt]{\scriptsize $\beta_{i^-}^1a$};
  
  \node at (5.5,0) {$\iff$};
\begin{scope}[xshift=0.5cm]
  \foreach \x in {6.5,7.5,8.5,9.5}{
    \node[text=M,scale=1.5] at (\x,0) {$\ast$};
  }
  \draw[JI, name path=T1, line width=1pt]
    (6,-2.5)  -- (6,0) to[out=90,in=-135] (7,1.5)node[above] {$T_{i-1}$};
  \draw[JI+, name path=T2, line width=1pt]
    (8,-2.5)  to[out=135,in=-90] (7,-1)--(7,0) to[out=90,in=-45] (6,1.5)node[above] {$T_i$};
  \draw[JI, name path=T3, line width=1pt]
    (7,-2.5) to[out=45,in=-90] (8,-1) -- (8,1.5)node[above] {$T_{i+1}$};
  \draw[JK, name path=T4, line width=1pt]
    (5,-2.5) --(5,-2) to[out=90,in=-90] (9,0.3) -- (9,1.5)node[above] {$T_k$};
  \fill[JT,name intersections={of=T1 and T2}] (intersection-1) circle (2pt)node[yshift=-8pt]{\scriptsize $\delta$};
  \fill[JT,name intersections={of=T3 and T2}] (intersection-1) circle (2pt)node[yshift=8pt]{\scriptsize $\delta$};
  \fill[name intersections={of=T4 and T3}] (intersection-1) circle (2pt)
       node[xshift=8pt,yshift=-5pt]{\scriptsize $b_+$};
  \fill[name intersections={of=T4 and T1}] (intersection-1) circle (2pt)
       node[xshift=8pt,yshift=-5pt]{\scriptsize $b_-$};
  \fill[name intersections={of=T4 and T2}] (intersection-1) circle (2pt)
       node[xshift=7pt,yshift=6pt]{\scriptsize $b_0$};
\end{scope}
 \end{tikzpicture}
 \caption{$\beta_{i^-}^1$ acting on $\Hom (T_i,T_k)$ with $k>i$}
\end{figure}
We have $\beta_{i^-}^1(a)=[b_-, b_+]$. Here, $b_-$ is directly inherited from $a$, thus $b_-=a_{k,i-1}s^\alpha$. $b_+$ and $b_0$ are given by the pseudoholomorphic discs $b_0=b_-\cdot \delta =b_+\cdot \delta$, thus $b_+=a_{k,i+1}s^{\alpha +1}$ since one of the discs encloses a puncture.

For $k>i$,
\begin{equation}
  \begin{tikzpicture}
  \node at (4,0) {$\beta_{i^-}^1(T_i\xrightarrow{s^\alpha} T_k)=$};
  \draw[gray] (6.7,1.5)--(6.5,1.5)--(6.5,-1.5)--(6.7,-1.5);
  \draw[gray] (9.5,1.5)--(9.7,1.5)--(9.7,-1.5)--(9.5,-1.5);
  \node at (7,0) {$T_i$};
  \draw[->,shorten <=13pt, shorten >=13pt, line width=0.7pt] (7,0) --  (9,1);
  \draw[->,shorten <=13pt, shorten >=13pt, line width=0.7pt] (7,0) --  (9,-1);
  \node[above, xshift=-3pt] at (8,0.5) {$\tikzset{every path/.append style={-}}
  \begin{tikzpicture}[scale=1.5, line width=0.7pt]
  \foreach \x in {0.15,0.25} {
      \draw[M] (\x,-0.1) -- (\x,0.1);
  }
  \draw[black]
      (0.2,-0.1) -- (0.2,-0.08)
      to[out=90,in=-90] (0.1,0.08)
      -- (0.1,0.1);
  \end{tikzpicture}$};
  \node[below, xshift=-9pt] at (8,-0.5) {$-$ $\tikzset{every path/.append style={-}}
  \begin{tikzpicture}[scale=1.5, line width=0.7pt]
  \foreach \x in {0.15,0.25} {
      \draw[M] (\x,-0.1) -- (\x,0.1);
  }
  \draw[black]
      (0.2,-0.1) -- (0.2,-0.08)
      to[out=90,in=-90] (0.3,0.08)
      -- (0.3,0.1);
  \end{tikzpicture}$};
  \node at (9,1) {$T_{i-1}$};
  \node at (9,0) {$\oplus$};
  \node at (9,-1) {$T_{i+1}$};

  \node at (13,-1) {$T_k$};
  \draw[->,shorten <=13pt, shorten >=13pt, line width=0.7pt] (9,1)--node[above,midway,yshift=5pt]{\small $s^\alpha$}  (13,-1);
  \draw[->,shorten <=13pt, shorten >=13pt, line width=0.7pt] (9,-1)--node[above,midway]{\small $s^{\alpha+1}$}  (13,-1);
  \end{tikzpicture}
\end{equation}

Correspondingly, for $k<i$
\begin{equation}
  \begin{tikzpicture}
  \node at (4,0) {$\beta_{i^-}^1(T_i\xrightarrow{s^\alpha} T_k)=$};
  \draw[gray] (6.7,1.5)--(6.5,1.5)--(6.5,-1.5)--(6.7,-1.5);
  \draw[gray] (9.5,1.5)--(9.7,1.5)--(9.7,-1.5)--(9.5,-1.5);
  \node at (7,0) {$T_i$};
  \draw[->,shorten <=13pt, shorten >=13pt, line width=0.7pt] (7,0) --  (9,1);
  \draw[->,shorten <=13pt, shorten >=13pt, line width=0.7pt] (7,0) --  (9,-1);
  \node[above, xshift=-3pt] at (8,0.5) {$\tikzset{every path/.append style={-}}
  \begin{tikzpicture}[scale=1.5, line width=0.7pt]
  \foreach \x in {0.15,0.25} {
      \draw[M] (\x,-0.1) -- (\x,0.1);
  }
  \draw[black]
      (0.2,-0.1) -- (0.2,-0.08)
      to[out=90,in=-90] (0.1,0.08)
      -- (0.1,0.1);
  \end{tikzpicture}$};
  \node[below, xshift=-9pt] at (8,-0.5) {$-$ $\tikzset{every path/.append style={-}}
  \begin{tikzpicture}[scale=1.5, line width=0.7pt]
  \foreach \x in {0.15,0.25} {
      \draw[M] (\x,-0.1) -- (\x,0.1);
  }
  \draw[black]
      (0.2,-0.1) -- (0.2,-0.08)
      to[out=90,in=-90] (0.3,0.08)
      -- (0.3,0.1);
  \end{tikzpicture}$};
  \node at (9,1) {$T_{i-1}$};
  \node at (9,0) {$\oplus$};
  \node at (9,-1) {$T_{i+1}$};

  \node at (13,1) {$T_k$};
  \draw[->,shorten <=13pt, shorten >=13pt, line width=0.7pt] (9,1)--node[above,midway,yshift=5pt]{\small $s^{\alpha+1}$}  (13,1);
  \draw[->,shorten <=13pt, shorten >=13pt, line width=0.7pt] (9,-1)--node[above,midway]{\small $s^{\alpha}$}  (13,1);
  \end{tikzpicture}
\end{equation}

The only morphisms left are the ones in $\Hom (T_i,T_i)$, which can be computed using the same procedure above. To be concise, we write out the results directly.

\begin{itemize}
  \item For $a=s^\alpha_i\in \Hom(T_i,T_i)$:
\end{itemize}

\begin{equation}
  \begin{tikzpicture}
  \node at (-3,0) {$\beta_{i^-}^1(T_i\xrightarrow{s^\alpha}T_i)=$};
  \begin{scope}[xshift=-7cm]
  \node at (7,0) {$T_i$};
  \draw[->,shorten <=13pt, shorten >=13pt, line width=0.7pt] (7,0) --   (11,0);
  \draw[->,shorten <=13pt, shorten >=13pt, line width=0.7pt] (9,1) --   (13,1);
  \draw[->,shorten <=13pt, shorten >=13pt, line width=0.7pt] (9,-1) --   (13,-1);
  \node[above] at (10,0){\small $s^\alpha$};
  \node[above] at (10,1){\small $s^\alpha$};
  \node[above] at (10,-1){\small $s^\alpha$};
  \draw[->,shorten <=13pt, shorten >=13pt, line width=0.7pt] (7,0) --  (9,1);
  \draw[->,shorten <=13pt, shorten >=13pt, line width=0.7pt] (7,0) --  (9,-1);
  \node[above, xshift=-3pt] at (8,0.5) {$\tikzset{every path/.append style={-}}
  \begin{tikzpicture}[scale=1.5, line width=0.7pt]
  \foreach \x in {0.15,0.25} {
      \draw[M] (\x,-0.1) -- (\x,0.1);
  }
  \draw[black]
      (0.2,-0.1) -- (0.2,-0.08)
      to[out=90,in=-90] (0.1,0.08)
      -- (0.1,0.1);
  \end{tikzpicture}$};
  \node[below, xshift=-9pt] at (8,-0.5) {$-$ $\tikzset{every path/.append style={-}}
  \begin{tikzpicture}[scale=1.5, line width=0.7pt]
  \foreach \x in {0.15,0.25} {
      \draw[M] (\x,-0.1) -- (\x,0.1);
  }
  \draw[black]
      (0.2,-0.1) -- (0.2,-0.08)
      to[out=90,in=-90] (0.3,0.08)
      -- (0.3,0.1);
  \end{tikzpicture}$};
  \node at (9,1) {$T_{i-1}$};
  \node at (9,-0.3) {$\oplus$};
  \node at (9,-1) {$T_{i+1}$};
  \draw[gray] (6.7,1.5)--(6.5,1.5)--(6.5,-1.5)--(6.7,-1.5);
  \draw[gray] (9.5,1.5)--(9.7,1.5)--(9.7,-1.5)--(9.5,-1.5);
  \end{scope}
  \begin{scope}[xshift=-3cm]
  \node at (7,0) {$T_i$};
  \draw[->,shorten <=13pt, shorten >=13pt, line width=0.7pt] (7,0) --  (9,1);
  \draw[->,shorten <=13pt, shorten >=13pt, line width=0.7pt] (7,0) --  (9,-1);
  \node[above, xshift=-3pt] at (8,0.5) {$\tikzset{every path/.append style={-}}
  \begin{tikzpicture}[scale=1.5, line width=0.7pt]
  \foreach \x in {0.15,0.25} {
      \draw[M] (\x,-0.1) -- (\x,0.1);
  }
  \draw[black]
      (0.2,-0.1) -- (0.2,-0.08)
      to[out=90,in=-90] (0.1,0.08)
      -- (0.1,0.1);
  \end{tikzpicture}$};
  \node[below, xshift=-9pt] at (8,-0.5) {$-$ $\tikzset{every path/.append style={-}}
  \begin{tikzpicture}[scale=1.5, line width=0.7pt]
  \foreach \x in {0.15,0.25} {
      \draw[M] (\x,-0.1) -- (\x,0.1);
  }
  \draw[black]
      (0.2,-0.1) -- (0.2,-0.08)
      to[out=90,in=-90] (0.3,0.08)
      -- (0.3,0.1);
  \end{tikzpicture}$};
  \node at (9,1) {$T_{i-1}$};
  \node at (9,0) {$\oplus$};
  \node at (9,-1) {$T_{i+1}$};
  \draw[gray] (6.7,1.5)--(6.5,1.5)--(6.5,-1.5)--(6.7,-1.5);
  \draw[gray] (9.5,1.5)--(9.7,1.5)--(9.7,-1.5)--(9.5,-1.5);
  \end{scope}
  \end{tikzpicture}\label{eq:beta1.}
\end{equation}

\begin{theorem}
  \eqref{eq:beta1*}--\eqref{eq:beta1.} gives the action of $\beta_{i^-}^1$ on all morphisms $\Hom (T_j,T_k)$.
\end{theorem}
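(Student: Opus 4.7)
The plan is to recognize that this theorem collects the case-by-case computations carried out in the preceding discussion, and to supply the verification of completeness. Two things must be checked: (a) the four displayed formulas exhaust every pair $(j,k)$, and (b) each formula is established by an explicit intersection/disc count. Since $\beta_{i^-}^1$ is $\Bbbk$-linear and $\Hom(T_j,T_k)$ is spanned by $\{a_{kj}s^\alpha\}_{\alpha\ge 0}$, it suffices to specify the image of each basis element $a_{kj}s^\alpha$. The four mutually exclusive cases are: (i) $j,k\ne i$; (ii) $k=i$, $j\ne i$; (iii) $j=i$, $k\ne i$; (iv) $j=k=i$; and they are clearly exhaustive.

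Case (i) is immediate: $\beta_{i^-}$ acts trivially on both $T_j$ and $T_k$, so the wrapping picture of Fig.~\ref{fig:Mor} is unchanged and $\beta_{i^-}^1$ is the identity on this block of morphisms. Case (ii) follows from the resolution of $\beta_{i^-}T_i$ as the cone in (\ref{eq:beta-}): the positively wrapped $T_j$ meets only the outer strand of the resolution on the side of $T_j$, yielding a single morphism $b$ with dot count inherited from $a$, which is precisely (\ref{eq:TjTi}) and its $j<i$ analog. Case (iv) is handled by applying cases (ii) and (iii) componentwise to the $2\times 2$ matrix of morphisms between the two cones representing the source and target copies of $\beta_{i^-}T_i$; the entries either come from the formulas above or vanish for degree and dot-count reasons.

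The heart of the argument, and what I expect to be the main obstacle, is case (iii). Here the wrapped $T_k$ meets the resolution of $\beta_{i^-}T_i$ at three points $b_-,b_0,b_+$, and one must identify the two components of $\beta_{i^-}^1(a)$ as a morphism into the mapping cone. The $b_-$ component is read off directly from the intersection with the component of $\beta_{i^-}T_i$ closest to $T_k$, inheriting $s^\alpha$ from $a$. The $b_+$ component is then fixed by Maurer--Cartan consistency: two triangular discs with boundaries on $T_k$, the middle strand $T_i$, and one of $T_{i\pm 1}$, and with one vertex at $\delta$, give the relations $b_0 = b_-\cdot\delta = b_+\cdot\delta$, and exactly one of these discs winds once around a puncture. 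By the same rotation argument as after Fig.~\ref{fig:pq}, the winding forces the fiber boundary of that disc to undergo an additional full turn relative to the stop, which is recorded as an extra dot in $b_+$, yielding $s^{\alpha+1}$. The technical subtlety is tracking which of the two discs winds in each of the subcases $k>i$ and $k<i$, and verifying that the signs in the mapping-cone presentation (with the minus sign in $\delta$ of (\ref{eq:beta-})) are consistent with the Koszul conventions of Section~\ref{sect:ajisk}; once the wrapping geometry is drawn carefully, the answer is uniquely pinned down and matches the formulas in the statement.
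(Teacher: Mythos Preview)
Your proposal is correct and matches the paper's approach: the theorem in the paper is stated without a separate proof, as it simply collects the case-by-case computations carried out in the preceding discussion (the figures and disc counts you summarize), and your case exhaustion (i)--(iv) together with the geometric justifications is exactly that content. Your identification of case (iii) as the substantive one, with the relation $b_0=b_-\cdot\delta=b_+\cdot\delta$ and the extra dot arising from the puncture-winding argument, reproduces the paper's reasoning verbatim.
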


\begin{remark}
We can already verify the condition for a functor (\ref{eq:functor}) for $d=1$, which is
\begin{equation}
  \mu^1_{\delta}\beta_{i^-}^1(a)=\beta_{i^-}^1\mu^1_{\delta}(a).
\end{equation}
Since for morphisms between generators we have $\mu^1_{\delta}(a)=0$ and $|\beta^1_{i-}(a)|=0$, this is
\begin{align}
  \mu^1_{\delta}\beta_{i^-}^1(a) =\delta \beta_{i^-}^1(a)-\beta_{i^-}^1(a) \delta =0,
\end{align}
which is followed by direct computation.
\end{remark}

\subsubsection{Higher components}
There remains one piece of the puzzle to be clarified. 
According to \eqref{eq:functor}, the components $\bim^d$ carry a cohomological degree shift of $[1-d]$. 
Theorem~\ref{thm:obj} shows that the bimodule $\bim T_j$ is a complex concentrated in degree~$0$ for $j\neq i$, 
while for $j=i$ it has nonzero terms in degrees $0$ and $-1$. 
Consequently, the only potentially nontrivial components $\bim^d$ on $\mathcal{C}$ occur for $d=1$ and $d=2$,
\begin{equation}
  \bim^d=0,\quad d\ge 3.
\end{equation}
In particular, since $\bim^2$ has degree~$-1$, it can be nonzero only on composable morphisms of the form
\[
T_j \longrightarrow T_k \longrightarrow T_i.
\]
where $\bim^2$ gets a morphism in $\Hom^{-1}(\bim T_j,\bim T_i)$.

The component $\bim^2$ can be solved directly from \eqref{eq:functor} with $d=2$, which is
\begin{align}
  &\mu _{\delta}^{2}\left( \beta _{i^-}^{1}\left( a_2 \right) ,\beta _{i^-}^{1}\left( a_1 \right) \right) +\mu _{\delta}^{1}\left( \beta _{i^-}^{2}\left( a_2,a_1 \right) \right) =\beta _{i^-}^{1}\left( \mu ^2\left( a_2,a_1 \right) \right) 
  \\
  \implies &\delta \beta _{i^-}^{2}\left( a_2,a_1 \right) +\beta _{i^-}^{2}\left( a_2,a_1 \right) \delta =\beta _{i^-}^{1}\left( a_2 \right) \cdot\beta _{i^-}^{1}\left( a_1 \right)  -\beta _{i^-}^{1}\left( a_2\cdot a_1  \right) \label{eq:beta2cond}
\end{align}

\begin{theorem}\label{thm:Marco}
  \mathleft
  This was first computed by Marco David. For $a_{lk}s^\beta\otimes a_{kj}a^\alpha\in \Hom(T_k,T_l)\otimes\Hom(T_j,T_k)$, $\bim^2(a_{lk}s^\beta, a_{kj}s^\alpha)$ is given by:
  \begin{itemize}
    \item if $l\ne i$,\begin{equation}
      \bim^2(a_{lk}s^\beta, a_{kj}s^\alpha)=0;
    \end{equation}
    \item if $l=i$ and $j\ne i$,
    \begin{itemize}
      \item if $(i-j)(i-k)\ge 0$,\begin{equation}
      \bim^2(a_{ik}s^\beta, a_{kj}s^\alpha)=0;
    \end{equation}
      \item if $(i-j)(i-k)<0$,\begin{equation}
        \bim^2(a_{ik}s^\beta, a_{kj}s^\alpha)=\mathrm{sgn}(i-k) a_{ij}s^{\beta+\alpha+|i-k|-1},
      \end{equation}
      which is a map from $\bim T_j =T_j$ to the degree $-1$ component of $\bim T_i$;
    \end{itemize}
    \item if $l=j=i$,
    \begin{equation}
      \bim^2(a_{ik}s^\beta, a_{ki}s^\alpha)=\mathrm{sgn}(i-k) a_{i,i+\mathrm{sgn}(i-k)}s^{\beta+\alpha+|i-k|-1},
    \end{equation}
    which is a degree $-1$ map from $(\bim T_i)^0=T_{i-1}\oplus T_{i+1}$ to $(\bim T_i)^{-1}=T_i$.
  \end{itemize}
\end{theorem}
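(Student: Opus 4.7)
The strategy is to solve equation \eqref{eq:beta2cond}, which uniquely characterizes $\bim^2$ as a degree $-1$ chain homotopy between the two naive composites. I proceed by case analysis on the target $T_l$ of the composable triple $T_j \to T_k \to T_l$, exploiting the fact that $\bim T_l$ is concentrated in degree $0$ whenever $l \neq i$ and otherwise is the two-term complex $T_i \xrightarrow{\delta} T_{i-1}\oplus T_{i+1}$ with $\delta = (a_{i-1,i},\,-a_{i+1,i})$.

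When $l \neq i$, the target $\bim T_l = T_l$ admits no degree $-1$ morphism from any $\bim T_j$, forcing $\bim^2 = 0$. A direct check using the explicit $\bim^1$ formulas \eqref{eq:beta1*}--\eqref{eq:beta1.} confirms that the right-hand side $D := \bim^1(a_2)\cdot\bim^1(a_1) - \bim^1(a_2\cdot a_1)$ also vanishes identically, consistent with this conclusion.

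For $l = i$, $j \neq i$, one computes $D$ component-wise in $\bim T_i$. When $(i-j)(i-k)\ge 0$, both terms in $D$ land in the same summand of $\bim T_i$ with identical dot counts---this follows from the fact that the offset $\delta(\cdot,\cdot,\cdot)$ in \eqref{eq:deltaijk} vanishes whenever $k$ and $j$ lie on the same side of $i$---so $D = 0$ and $\bim^2 = 0$. When $(i-j)(i-k) < 0$, the two terms land in opposite summands $T_{i-1}$ and $T_{i+1}$ with dot counts differing by exactly one. Taking the ansatz $\bim^2(a_2,a_1) = c\cdot a_{ij}s^\gamma$ mapping $T_j$ into the degree $-1$ slot $T_i$ of $\bim T_i$ (the only degree $-1$ target), and computing $\delta\circ\bim^2$ via \eqref{eq:mu2}---using crucially that $\delta(j,i,i\pm 1)=1$ precisely when $j$ and $i\pm 1$ lie on opposite sides of $i$---one matches $D$ uniquely with $c = \mathrm{sgn}(i-k)$ and $\gamma = \beta + \alpha + |i-k| - 1$.

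The case $l = j = i$ is analogous, but now both source and target carry the cone structure, so both $\delta\bim^2$ and $\bim^2\delta$ contribute to the left-hand side of \eqref{eq:beta2cond}. Matching the degree $0 \to 0$ part of $D$ with $\delta\bim^2$, and the degree $-1 \to -1$ part $-s_i^{\alpha+\beta+|i-k|}$---which comes entirely from $-\bim^1(a_2\cdot a_1)$, since $\bim^1(a_2)\cdot\bim^1(a_1)$ has no degree $-1$ component---with $\bim^2\delta$, simultaneously selects the single target summand $T_{i+\mathrm{sgn}(i-k)}$ and fixes the dot count. The principal technical obstacle throughout is the dot-count bookkeeping: the exponent $|i-k|-1$ reflects that \eqref{eq:mu2} contributes $|i-k|$ dots while the cone differential absorbs exactly one, forcing $\bim^2$ to carry the remainder. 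Sign tracking in the Seidel convention becomes mechanical once the components of $\delta$ are fixed.
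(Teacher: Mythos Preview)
Your proposal is correct and follows essentially the same route as the paper: both establish the formulas by verifying equation~\eqref{eq:beta2cond} case by case. The paper's proof is terser---it simply states that the formulas can be checked directly and illustrates with two representative diagrams---whereas you give a systematic derivation, arguing (correctly, via the injectivity of left-composition with the cone differential $\delta$) that the degree constraints pin down $\bim^2$ uniquely once the discrepancy $D$ is known.

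One small correction: in the sub-case $(i-j)(i-k)\ge 0$ with $j,k\neq i$, your justification that ``the offset $\delta(\cdot,\cdot,\cdot)$ in \eqref{eq:deltaijk} vanishes whenever $k$ and $j$ lie on the same side of $i$'' is not accurate. For instance, when $i<j<k$ one has $\delta(j,k,i)=k-j\neq 0$. What actually forces $D=0$ is that $\delta(j,k,i)=\delta(j,k,i\pm 1)$ throughout this regime (the minimum in \eqref{eq:deltaijk}, when nonzero, is attained by the leg $|j-k|$ not involving $i$), so the dot counts of $\bim^1(a_2)\cdot\bim^1(a_1)$ and $\bim^1(a_2\cdot a_1)$ coincide. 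Your conclusion is unaffected.
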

\begin{proof}
  These can be checked directly to satisfy \eqref{eq:beta2cond}. For example, for $T_j\xrightarrow{s^\alpha}T_k\xrightarrow{s^\beta}T_i$ with $k<i<j$,
  \begin{equation}
        \begin{tikzpicture}
      \node at(0,0) {$T_j$};
      \node at(2,1) {$T_k$};
      \node at(6,0) {$T_i$};
      \node at(8,1) {$T_{i-1}$};
      \node at(8,-1){$T_{i+1}$};
      \draw[gray] (5.7,1.5)--(5.5,1.5)--(5.5,-1.5)--(5.7,-1.5);
      \draw[gray] (8.5,1.5)--(8.7,1.5)--(8.7,-1.5)--(8.5,-1.5);
      \draw[->,shorten <=9pt, shorten >=13pt, line width=0.7pt](6,0)--(8,1);
      \draw[->,shorten <=9pt, shorten >=13pt, line width=0.7pt](6,0)--node[below,midway]{$-$}(8,-1);
      \draw[->,JI,shorten <=13pt, shorten >=13pt, line width=0.7pt](0,0)--node[above,midway,JI+]{\small$s^\alpha$}(2,1);
      \draw[->,JI,shorten <=13pt, shorten >=13pt, line width=0.7pt](2,1)--(8,1);
      \node[above,JI+] at(4,1){\small$s^\beta$};
      \draw[->,JK,shorten <=13pt, shorten >=13pt, line width=0.7pt] (0,0) to[out=-45, in=180]  (4,-1)-- (8,-1);
      \node[above,JK] at(4,-1){\small$s^{\alpha+\beta+|i-k|}$};
      \draw[->,JY,shorten <=13pt, shorten >=9pt, line width=0.7pt] (0,0) -- (6,0);
      \node[above,JY] at(4,0){\small$s^{\alpha+\beta+|i-k|-1}$};
      \node[JI+] at(11,1){$\bim^1\bim^1$};
      \node[JY] at(11,0){$\bim^2$};
      \node[JK] at(11,-1){$\bim^1\mu^2$};
    \end{tikzpicture}
  \end{equation}
  for $T_i\xrightarrow{s^\alpha}T_k\xrightarrow{s^\beta}T_i$ with $k<i$,
  \begin{equation}
    \begin{tikzpicture}
      \begin{scope}[xshift=-7cm]
      \node at(6,0) {$T_i$};
      \node at(8,1) {$T_{i-1}$};
      \node at(8,-1){$T_{i+1}$};
      \draw[gray] (5.7,1.5)--(5.5,1.5)--(5.5,-1.5)--(5.7,-1.5);
      \draw[gray] (8.5,1.5)--(8.7,1.5)--(8.7,-1.5)--(8.5,-1.5);
      \draw[->,shorten <=9pt, shorten >=13pt, line width=0.7pt](6,0)--(8,1);
      \draw[->,shorten <=9pt, shorten >=13pt, line width=0.7pt](6,0)--node[below,midway]{$-$}(8,-1);
      \end{scope}
      \node at(5,2) {$T_k$};
      \node at(6,0) {$T_i$};
      \node at(8,1) {$T_{i-1}$};
      \node at(8,-1){$T_{i+1}$};
      \draw[gray] (5.7,1.5)--(5.5,1.5)--(5.5,-1.5)--(5.7,-1.5);
      \draw[gray] (8.5,1.5)--(8.7,1.5)--(8.7,-1.5)--(8.5,-1.5);
      \draw[->,shorten <=9pt, shorten >=13pt, line width=0.7pt](6,0)--(8,1);
      \draw[->,shorten <=9pt, shorten >=13pt, line width=0.7pt](6,0)--node[below,midway]{$-$}(8,-1);

      \draw[->,JI,shorten <=13pt, shorten >=13pt, line width=0.7pt](1,1)--node[JI+,above,pos=0.4]{\small$s^{\alpha+1}$}(5,2);
      \draw[->,JI,shorten <=13pt, shorten >=13pt, line width=0.7pt](1,-1)--node[JI+,above,pos=0.4]{\small $s^\alpha$}(5,2);
      \draw[->,JI,shorten <=13pt, shorten >=13pt, line width=0.7pt](5,2)--node[JI+,above,midway]{\small $s^\beta$}(8,1);
      \draw[->,JK,shorten <=13pt, shorten >=9pt, line width=0.7pt] (-1,0) -- (6,0);
      \draw[->,JK,shorten <=13pt, shorten >=13pt, line width=0.7pt] (1,-1) -- (8,-1);
      \draw[->,JK,shorten <=13pt, shorten >=13pt, line width=0.7pt] (1,1) -- (8,1);

      \node[above,JK] at(4,0){\small $s^{\alpha+\beta+|i-k|}$};
      \node[above,JK] at(4,1){\small $s^{\alpha+\beta+|i-k|}$};
      \node[above,JK] at(4,-1){\small $s^{\alpha+\beta+|i-k|}$};
      \draw[->,JY,shorten <=13pt, shorten >=9pt, line width=0.7pt] (1,-1) --node[above,pos=0.4] {\small $s^{\alpha+\beta+|i-k|-1}$} (6,0);
      
      \node[JI+] at(11,1){$\bim^1\bim^1$};
      \node[JY] at(11,0){$\bim^2$};
      \node[JK] at(11,-1){$\bim^1\mu^2$};
    \end{tikzpicture}
  \end{equation}
  
\end{proof}

Let us note that $\bim^2$ also has direct geometrical meaning. 
As an illustrative example, consider $a_2\otimes a_1\in\Hom(T_{i-1},T_i)\otimes \Hom(T_{i+1},T_{i-1})$ under braiding,

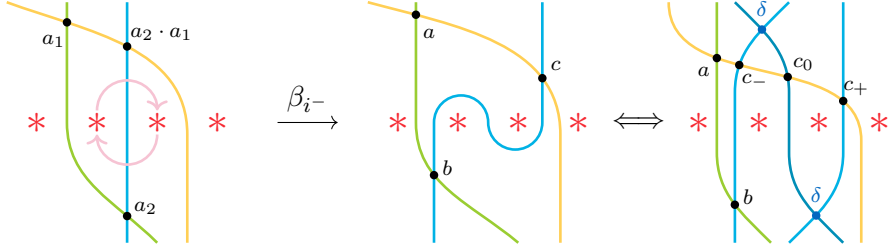
\begin{figure}[H]
  \centering
 \begin{tikzpicture}[scale=0.8]
  \foreach \x in {-5.5,-4.5,-3.5,-2.5}{
    \node[text=M,scale=1.5] at (\x,0) {$\ast$};
  }
  \draw[JI,line width=1pt, name path=T3] (-3,-2)--(-3,-0.2) to[out=90,in=-20] (-6,2);
  \draw[JY,line width=1pt, name path=T1] (-3.5,-2) to[out=135,in=-90] (-5,0)--(-5,2);
  \draw[JK,line width=1pt, name path=T2] (-4,-2)--(-4,2);
  \draw[->] (-1.5,0) --node[above, midway]{$\beta_{i^-}$} (-0.5,0);
  \fill[name intersections={of=T3 and T1}] (intersection-1) circle (2pt) node[xshift=-5pt,yshift=-7pt]{\scriptsize $a_1$};
  \fill[name intersections={of=T1 and T2}] (intersection-1) circle (2pt) node[xshift=7pt,yshift= 3pt]{\scriptsize $a_2$};
  \fill[name intersections={of=T2 and T3}] (intersection-1) circle (2pt) node[xshift=13pt,yshift= 5pt]{\scriptsize $a_2\cdot a_1$};

  \begin{scope}[xshift=6cm]
  \foreach \x in {-5.5,-4.5,-3.5,-2.5}{
    \node[text=M,scale=1.5] at (\x,0) {$\ast$};
  }
  \draw[JI,line width=1pt, name path=BT3] (-2.8,-2)--(-2.8,-0.2) to[out=90,in=-15] (-6,2);
  \draw[JY,line width=1pt, name path=BT1] (-3.5,-2) to[out=140,in=-90] (-5.2,0)--(-5.2,2);
  \draw[JK,line width=1pt, name path=BT2]
    (-4.9,-2) -- (-4.9,0)
    arc[start angle=180,end angle=0,x radius=0.45cm,y radius=0.45cm]
    arc[start angle=180,end angle=360,x radius=0.45cm,y radius=0.45cm] 
    -- (-3.1,2); 
  \fill[name intersections={of=BT3 and BT1}] (intersection-1) circle (2pt) node[xshift=5pt,yshift=-7pt]{\scriptsize $a$};
  \fill[name intersections={of=BT1 and BT2}] (intersection-1) circle (2pt) node[xshift=5pt,yshift= 3pt]{\scriptsize $b$};
  \fill[name intersections={of=BT2 and BT3}] (intersection-1) circle (2pt) node[xshift=5pt,yshift= 5pt]{\scriptsize $c$};
  \end{scope}
  \begin{scope}[xshift=11cm]
  \foreach \x in {-5.5,-4.5,-3.5,-2.5}{
    \node[text=M,scale=1.5] at (\x,0) {$\ast$};
  }
  \draw[JI, name path=T4, line width=1pt]
    (-2.8,-2) --(-2.8,-0.3) to[out=90,in=-90] (-6,2);
  \draw[JY,line width=1pt, name path=T0] (-4.3,-2) to[out=135,in=-90] (-5.2,-0.5)--(-5.2,2);
  \draw[JK,line width=1pt, name path=T1]
    (-4.9,-2) -- (-4.9,0.5) to[out=90,in=-135](-4,2);
  \draw[JK!80!black, name path=T2, line width=1pt]
    (-3.1,-2) to[out=135,in=-90]  (-4,-0.5)--(-4,0.5) to[out=90,in=-45] (-4.9,2);
  \draw[JK,line width=1pt, name path=T3]
    (-3.1,2) -- (-3.1,-0.5) to[out=-90,in=45](-4,-2);
  \fill[name intersections={of=T4 and T0}] (intersection-1) circle (2pt) node[xshift=-5pt,yshift=-5pt]{\scriptsize $a$};
  \fill[name intersections={of=T0 and T1}] (intersection-1) circle (2pt) node[xshift=5pt,yshift= 3pt]{\scriptsize $b$};
  \fill[JO,name intersections={of=T1 and T2}] (intersection-1) circle (2pt)node[yshift=8pt]{\scriptsize $\delta$};
  \fill[JO,name intersections={of=T3 and T2}] (intersection-1) circle (2pt)node[yshift=8pt]{\scriptsize $\delta$};

  \fill[name intersections={of=T4 and T3}] (intersection-1) circle (2pt)
       node[xshift=5pt,yshift=5pt]{\scriptsize $c_+$};
  \fill[name intersections={of=T4 and T1}] (intersection-1) circle (2pt)
       node[xshift=6pt,yshift=-6pt]{\scriptsize $c_-$};
  \fill[name intersections={of=T4 and T2}] (intersection-1) circle (2pt)
       node[xshift=6pt,yshift=5pt]{\scriptsize $c_0$};
  \end{scope}

  \draw[->,SA!50,line width=1pt] (-4.5,0.2) arc[start angle=180,end angle=0,x radius=0.5cm,y radius=0.5cm];
  \draw[->,SA!50,line width=1pt] (-3.5,-0.2) arc[start angle=0,end angle=-180,x radius=0.5cm,y radius=0.5cm];
  \node at (4.5,0) {$\iff$};
 \end{tikzpicture}
 \caption{Action of $\beta_{i^-}$ on $\Hom(T_{i-1},T_i)\otimes \Hom(T_{i+1},T_{i-1})$}
\end{figure}

Before resolving $\bim T_i$, we have a disc $c=b\cdot a$. After resolving $\bim T_i$, the disc $c=ba$ breaks into three smaller discs, we have $c_-=b\cdot a=\bim^1(a_2)\cdot \bim^1(a_1)$ and $c_+=\bim^1(a_2\cdot a_1)$, thus $\bim^1(a_2)\cdot \bim^1(a_1)\ne \bim^1(a_2\cdot a_1)$. Nevertheless, $c_-$ and $c_+$ are cohomologous since $\delta c_0=c_--c_+$ is a coboundary. Thus $c_0=\bim^2(a_2,a_1)$ gives the information of chain homotopy.

\subsubsection{Extension to the full category}

One can check that the $d>2$ equations of \eqref{eq:functor} are satisfied.
Thus, we have constructed a functor\begin{equation}
  \bim \colon \mathcal{C}\to\mathrm{Tw}^b(\mathcal{C}).
\end{equation}
where $\mathrm{Tw}^b(\mathcal{C})\cong\mathrm{Fuk}_{|||}(\M (\bullet,1),\mathcal{W}_{\mathbf{x}})$.
We are then able to extend it canonically to a functor
\begin{equation}
  \bim \colon \mathrm{Tw}^b(\mathcal{C})\to\mathrm{Tw}^b(\mathcal{C}).
\end{equation}
using the construction in \cite[\S 3m]{Seidel-book}.

\begin{remark}\label{rmk:beta}
  In fact, the $A_\infty$-functor $\bim$ that acts on the dg-category of $\mathrm{Perf}(\mathcal{C})$ is only uniquely defined up to contractible choices, there are other totally good choices. For instance, in \eqref{eq:TjTi}, whenever $\alpha\ge 1$, the value of $\bim^1$ on $a_{ij}s^\alpha$ may be chosen as
  \begin{equation}
    \bim^1(a_{ij}s^\alpha)=\left[\lambda a_{i-1,j}s^{\alpha-1},\;(1-\lambda) a_{i+1,j}s^{\alpha}\right]
  \end{equation}
  for any $\lambda\in\Bbbk$. Geometrically, this corresponds to stretching $T_j$ across $T_i$ and $T_{i-1}$ in Fig.~\ref{fig:beta1a}, thereby producing the corresponding discs. Different choices of $\lambda$ lead to different representatives of $\bim^1$, but these ambiguities are compensated by appropriate corrections in $\bim^2$, so that the resulting $A_\infty$-functor remains homotopy equivalent. Throughout, we use the simplest such representative.
\end{remark}

\subsubsection{Realization as a bimodule}

The $(\mathcal{C},\mathcal{C})$-bimodule $\mathfrak{B}_{i^-}$ corresponding to the functor $\beta_{i^-}$ admits a diagrammatic description as the crossing of the red strands associated with the punctures $x_i$ and $x_{i+1}$ \cite{Webster1, Webster2}.
An element in the bimodule is a strand diagram, considered up to isotopy and the relations in Figs.~\ref{fig:KLRW} and \ref{fig:Crossing}. The bimodule action is given by vertical stacking of KLRW diagrams on the top and bottom.

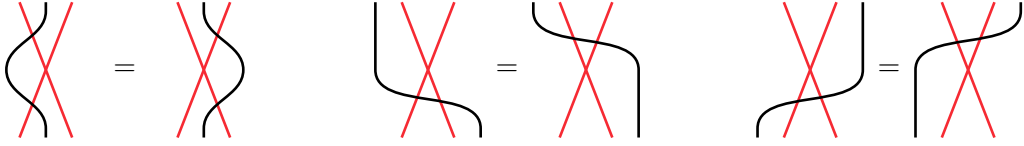
\begin{figure}[H]
  \centering
  \begin{subfigure}[b]{0.3\textwidth}
  \centering
	\begin{tikzpicture}[xscale=0.7, yscale=0.6]
  \draw[M,line width=1pt] (-0.5,-1.5)--(0.5,1.5);
  \draw[M,line width=1pt] (0.5,-1.5)--(-0.5,1.5);
  \draw[black,line width=1pt]
    (0,-1.5) -- (0,-1.3) to[out=90,in=-90] (-0.75,0)
    to[out=90,in=-90] (0,1.3) -- (0,1.5);
  \node at (1.5,0) {$=$};
  \draw[M,line width=1pt] (2.5,-1.5)--(3.5,1.5);
  \draw[M,line width=1pt] (3.5,-1.5)--(2.5,1.5);
  \draw[black,line width=1pt]
    (3,-1.5) -- (3,-1.3) to[out=90,in=-90] (3.75,0)
    to[out=90,in=-90] (3,1.3) -- (3,1.5);
  \end{tikzpicture}
  \end{subfigure}
  \begin{subfigure}[b]{0.3\textwidth}
  \centering
	\begin{tikzpicture}[xscale=0.7, yscale=0.6]
  \draw[M,line width=1pt] (-0.5,-1.5)--(0.5,1.5);
  \draw[M,line width=1pt] (0.5,-1.5)--(-0.5,1.5);
  \draw[black,line width=1pt]
    (1,-1.5) -- (1,-1.3) to[out=90,in=-90] (-1,0) -- (-1,1.5);
  \node at (1.5,0) {$=$};
  \draw[M,line width=1pt] (2.5,-1.5)--(3.5,1.5);
  \draw[M,line width=1pt] (3.5,-1.5)--(2.5,1.5);
  \draw[black,line width=1pt]
    (4,-1.5) -- (4,0) to[out=90,in=-90] (2,1.3)  -- (2,1.5);
  \end{tikzpicture}
  \end{subfigure}
  \begin{subfigure}[b]{0.3\textwidth}
  \centering
	\begin{tikzpicture}[xscale=0.7, yscale=0.6]
    \draw[M,line width=1pt] (-0.5,-1.5)--(0.5,1.5);
    \draw[M,line width=1pt] (0.5,-1.5)--(-0.5,1.5);
  \draw[black,line width=1pt]
    (-1,-1.5) -- (-1,-1.3) to[out=90,in=-90] (1,0) -- (1,1.5);
  \node at (1.5,0) {$=$};
  \draw[M,line width=1pt] (2.5,-1.5)--(3.5,1.5);
  \draw[M,line width=1pt] (3.5,-1.5)--(2.5,1.5);
  \draw[black,line width=1pt]
    (2,-1.5) -- (2,0) to[out=90,in=-90] (4,1.3) -- (4,1.5);
  \end{tikzpicture}
  \end{subfigure}
  \caption{Additional relations for braiding bimodules}
  \label{fig:Crossing}
\end{figure}

To see why this gives negative braiding, consider $\mathfrak{B}_{i^-}(-,T_i)$ as a one-side module, resolving this gives
\begin{equation}
  \begin{tikzpicture}[line width=0.7pt]
  \node at (0,0) {$\oplus$};
  \node at (-4,0) {\tikz[line width=0.7pt,scale=0.5]{
  \draw[gray] (0,1) rectangle (4,-1);
    \foreach \x in {0.5,1.5,2.5,3.5}{
    \draw[M] (\x,-1)--(\x,1);
  }
  \fill (2,1) circle (3pt);}
  };
  \node at (0,2) {\tikz[line width=0.7pt,scale=0.5]{
  \draw[gray] (0,1) rectangle (4,-1);
    \foreach \x in {0.5,1.5,2.5,3.5}{
    \draw[M] (\x,-1)--(\x,1);
  }
  \fill (1,1) circle (3pt);}
  };
  \node at (0,-2) {\tikz[line width=0.7pt,scale=0.5]{
  \draw[gray] (0,1) rectangle (4,-1);
    \foreach \x in {0.5,1.5,2.5,3.5}{
    \draw[M] (\x,-1)--(\x,1);
  }
  \fill (3,1) circle (3pt);}
  };
  \node at (6,0) {\tikz[line width=0.7pt,scale=0.5]{
  \draw[gray] (0,1) rectangle (4,-1);
    \foreach \x in {0.5,3.5}{
    \draw[M] (\x,-1)--(\x,1);
  }
    \draw[M] (1.5,-1)--(2.5,1);
    \draw[M] (2.5,-1)--(1.5,1);
  \fill (2,1) circle (3pt);}};
\draw[->,shorten <=50pt, shorten >=50pt, line width=0.7pt] (-5,0) --  (0,2.5);
\draw[->,shorten <=50pt, shorten >=50pt, line width=0.7pt] (-5,0) --  (0,-2.5);
\draw[->,shorten <=50pt, shorten >=65pt, line width=0.7pt] (0,2.5) --  (7,0);
\draw[->,shorten <=50pt, shorten >=65pt, line width=0.7pt] (0,-2.5) --  (7,0);

  \node at (-3,1.5) {\tikz[scale=2, line width=0.7pt]{
  \foreach \x in {0.05,0.15,0.25,0.35} {
      \draw[M] (\x,-0.1) -- (\x,0.1);
  }
  \draw[black]
      (0.2,-0.1) -- (0.2,-0.08)
      to[out=90,in=-90] (0.1,0.08)
      -- (0.1,0.1);}};
  \node[xshift=-5] at (-3,-1.5) {$-$\tikz[scale=2, line width=0.7pt]{
  \foreach \x in {0.05,0.15,0.25,0.35} {
      \draw[M] (\x,-0.1) -- (\x,0.1);
  }
  \draw[black]
      (0.2,-0.1) -- (0.2,-0.08)
      to[out=90,in=-90] (0.3,0.08)
      -- (0.3,0.1);}};
  \node at (4,1.5) {\tikz[scale=2, line width=0.7pt]{
  \foreach \x in {0.05,0.35} {
      \draw[M] (\x,-0.1) -- (\x,0.1);
  }
      \draw[M] (0.15,-0.1) -- (0.25,0.1);
      \draw[M] (0.25,-0.1) -- (0.15,0.1);
  \draw[black]
      (0.1,-0.1) -- (0.1,-0.06)
      to[out=90,in=-90] (0.2,0.1);}};
  \node at (4,-1.5) {\tikz[scale=2, line width=0.7pt]{
  \foreach \x in {0.05,0.35} {
      \draw[M] (\x,-0.1) -- (\x,0.1);
  }
      \draw[M] (0.15,-0.1) -- (0.25,0.1);
      \draw[M] (0.25,-0.1) -- (0.15,0.1);
  \draw[black]
      (0.3,-0.1) -- (0.3,-0.06)
      to[out=90,in=-90] (0.2,0.1);}};
  \draw[gray] (-5.5,3)--(-6,3)--(-6,-3)--(-5.5,-3);
  \draw[gray] (2,3)--(2.5,3)--(2.5,-3)--(2,-3);
  \end{tikzpicture}
\end{equation}
where the boxes mean all possible diagrams with the specified ends, and the maps mean appending the diagrams on the top. The resolution is precisely the Yoneda embedding $\Hom (-,\beta_{i^-}T_i)$. Therefore,
\begin{equation}
  \mathfrak{B}_{i^-}(-,T_i)=\Hom(-,\beta_{i^-}T_i)
\end{equation}
which is expected. Here, $\mathfrak{B}_{i^-}$ is treated as an object in the abelian category of $\mathcal{C}$-bimodules. The higher component of braiding functors can be realized by resolving the corresponding one-side modules. Here in the resolution, one also finds the same choices encountered in Remark~\ref{rmk:beta}.

There is a monomorphism of bimodules

\begin{equation}
  \begin{tikzcd}
    \iota_i:&\mathfrak{B}_{i^-}\ar[r,hookrightarrow]&\Delta\\
    &\tikz[scale=4, line width=0.7pt]{
  \foreach \x in {0.05,0.35} {
      \draw[M] (\x,-0.15) -- (\x,0.15);
  }
      \draw[M] (0.15,-0.15) -- (0.25,0.15);
      \draw[M] (0.25,-0.15) -- (0.15,0.15);
  \draw[black]
      (0.2,-0.15) -- (0.2,-0.13)
      to[out=90,in=-90] (0.275,0)
      to[out=90,in=-90] (0.2,0.13)--(0.2,0.15);}\ar[r,mapsto]
  &\tikz[scale=4, line width=0.7pt]{
  \foreach \x in {0.05,0.15,0.25,0.35} {
      \draw[M] (\x,-0.15) -- (\x,0.15);
  }
  \draw[black]
      (0.2,-0.15) --(0.2,0.15);
  \fill (0.2,0) circle (0.5pt);}
  \end{tikzcd}
\end{equation}
and it acts on other elements in the obvious way. The cokernel of this monomorphism
\begin{equation}
  \begin{tikzcd}
    0\ar[r]&\mathfrak{B}_{i^-}\ar[r,"\iota_i"]&\Delta\ar[r,"\pi_i"]&\mathfrak{S}_i\ar[r]&0
  \end{tikzcd}\label{eq:coker}
\end{equation}
is generated by only a single diagram $\textit{\k{e}}_i$ in Fig.~\ref{fig:ogonek}.

\begin{figure}[H]
  \centering
  \begin{tikzpicture}[scale=.7]
  \foreach \x in {0.5,3.5} {
    \draw[M,line width=1pt] (\x,-1.5) -- (\x,1.5);
  }
  \draw[M,line width=1pt](1.5,-1.5)--(1.5,-0.75)
  arc[start angle=180,end angle=0,x radius=0.5cm,y radius=0.5cm]
  --(2.5,-1.5);
  \draw[M,line width=1pt](1.5,1.5)--(1.5,0.75)
  arc[start angle=180,end angle=360,x radius=0.5cm,y radius=0.5cm]
  --(2.5,1.5);
  \draw[black,line width=1pt](2,-1.5)--(2,-0.25);
  \draw[black,line width=1pt](2,1.5)--(2,0.25);
  \end{tikzpicture}
  \caption{The only diagram $\textit{\k{e}}_i$ in $\mathfrak{S}_i=\operatorname{coker}\iota_i$}
  \label{fig:ogonek}
\end{figure}

Multiplication in $\mathfrak{S}_i$ is given by
\begin{equation}
  a_{ki}s^\beta\cdot \textit{\k{e}}_i\cdot a_{ij}s^{\alpha}=\begin{cases}
    \textit{\k{e}}_i,&j=k=i,\; \alpha =\beta =0,\\
    0,& \text{else}. 
  \end{cases}
\end{equation}
This can be summarized diagrammatically as Fig.~\ref{fig:coker} \cite{Webster2}.

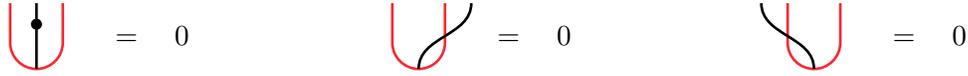
\begin{figure}[H]
  \centering
  \begin{subfigure}[b]{0.3\textwidth}
  \centering
	\begin{tikzpicture}[scale=0.7]
  \draw[M,line width=1pt](-0.5,1.5)--(-0.5,0.75)
  arc[start angle=180,end angle=360,x radius=0.5cm,y radius=0.5cm]
  --(0.5,1.5);
  \draw[black,line width=1pt]
    (0,1.5) -- (0,0.25);
  \fill[black] (0,1.1) circle (3pt);
  \node at (2.2,.875) {$=\quad 0$};
  \end{tikzpicture}
  \end{subfigure}
  \begin{subfigure}[b]{0.3\textwidth}
  \centering
	\begin{tikzpicture}[scale=0.7]
  \draw[M,line width=1pt](-0.5,1.5)--(-0.5,0.75)
  arc[start angle=180,end angle=360,x radius=0.5cm,y radius=0.5cm]
  --(0.5,1.5);
  \draw[black,line width=1pt]
    (1,1.5)  to[out=-90,in=90] (0,0.25);
  \node at (2.2,.875) {$=\quad 0$};
  \end{tikzpicture}
  \end{subfigure}
  \begin{subfigure}[b]{0.3\textwidth}
  \centering
	\begin{tikzpicture}[scale=0.7]
  \draw[M,line width=1pt](-0.5,1.5)--(-0.5,0.75)
  arc[start angle=180,end angle=360,x radius=0.5cm,y radius=0.5cm]
  --(0.5,1.5);
  \draw[black,line width=1pt]
    (-1,1.5) to[out=-90,in=90] (0,0.25);
  \node at (2.2,.875) {$=\quad 0$};
  \end{tikzpicture}
  \end{subfigure}
  \caption{Relations for $\mathfrak{S}_i$}
  \label{fig:coker}
\end{figure}

The epimorphism $\pi_i$ is given by
\begin{equation}
  \begin{tikzcd}
    \pi_i:&\Delta\ar[r, two heads]& \mathfrak{S}_i\\
    &\tikz[scale=4, line width=0.7pt]{
  \foreach \x in {0.05,0.15,0.25,0.35} {
      \draw[M] (\x,-0.15) -- (\x,0.15);
  }
  \draw[black]
      (0.2,-0.15) --(0.2,0.15);}\ar[r,mapsto]
  &\tikz[scale=4, line width=0.7pt]{\foreach \x in {0.05,0.35} {
    \draw[M] (\x,-0.15) -- (\x,0.15);
  }
  \draw[M](.15,-.15)--(.15,-0.075)
  arc[start angle=180,end angle=0,x radius=0.05cm,y radius=0.05cm]
  --(.25,-.15);
  \draw[M](.15,.15)--(.15,0.075)
  arc[start angle=180,end angle=360,x radius=0.05cm,y radius=0.05cm]
  --(.25,.15);
  \draw[black](.2,-.15)--(.2,-0.025);
  \draw[black](.2,.15)--(.2,0.025);}
  \end{tikzcd}
\end{equation}
and it maps everything else to $0$.

\begin{remark}
  This is not a coincidence. In general, the situation can be described as follows. 
  Suppose we have a Lefschetz fibration $f\colon Y \to \C$ with a singular value at $0$. 
  On the Fukaya category of a regular fiber $\mathrm{Fuk}\,(f^{-1}(1))$, there are two twisting exact triangles \cite{Seidel-Thomas}:
  \[
    S \otimes \Hom(S, -) \longrightarrow \mathrm{id} \longrightarrow T_{\circlearrowleft} \xrightarrow{[1]}
  \]
  and
  \[
    T_{\circlearrowright} \longrightarrow \mathrm{id} \longrightarrow S \otimes \Hom(-, S)^{\vee} \xrightarrow{[1]} .
  \]
  Here $T_{\circlearrowleft}$ denotes the monodromy obtained by going around $0$ counterclockwise, and $S \subset f^{-1}(1)$ is the vanishing cycle.
  The second triangle can be viewed as applying the clockwise monodromy $T_{\circlearrowright}$ to the first one, together with the usual identification 
  $\Hom(-, S)^{\vee} \simeq \Hom(S, -)[\pm1]$.

  In Aganagic's setup, one considers a one-parameter family of spaces where the puncture positions $\mathbf{x}=\{x_i\}$ vary with a parameter $t$. Consider
  \[
    (x - x_i)(x - x_{i+1}) = x^2 - t,
  \]
  while other punctures stay fixed.  
  The resulting family defines a generalized Lefschetz fibration 
  $\tilde{\pi} : \tilde{Y} \to \mathbb{C}_t$,
  applying the twisting exact triangles gives \eqref{eq:coker}, where the vanishing cycle is $I_i$ in \eqref{eq:Ii}.
\end{remark}

The construction of the positive braiding functors $\beta_{i^+}$ proceeds identically. For the moment, we omit it here.

\section{$A_\infty$-natural transformations and Hochschild cohomology}

In this section we recall the definition of $A_\infty$-natural transformations, describe their differential, explain their interpretation in terms of Hochschild cohomology and bimodules, and clarify our notation conventions.

For two $A_\infty$ categories $\mathcal{A}$ and $\mathcal{B}$, the functors from $\mathcal{A}$ to $\mathcal{B}$ form another $A_\infty$-category $\Fun(\mathcal{A},\mathcal{B})$. A morphism $\eta\in \Hom^g_{\Fun(\mathcal{A},\mathcal{B})}(F,G)$ is called a \emph{pre-natural transformation}, which consists of a series of multilinear maps $\{\eta^d\}$ for $d\ge 0$:
\begin{equation}
  \eta^d\colon \Hom_{\mathcal{A}}(X_{d-1},X_d)\otimes\cdots\otimes\Hom_{\mathcal{A}}(X_0,X_1)\to\Hom_{\mathcal{B}}(FX_0,GX_d)[g-d],
\end{equation}
where each $X_i\in\mathrm{Ob}\mathcal{A}$.

There are $A_\infty$-structure maps $\mu^k_{\Fun(\mathcal{A},\mathcal{B})}$. In particular, the differential $\mu^1_{\Fun(\mathcal{A},\mathcal{B})}$ acts on $\eta$ by
\begin{equation}\label{eq:mu1-natural}
\begin{aligned}
&\mu_{\Fun(\mathcal{A},\mathcal{B})}^1(\eta)^d(a_d,\dots,a_1)\\
&\quad=
\sum_{r,i}
\sum_{s_1+\cdots+s_r=d}
(-1)^{\dagger}
\mu_\mathcal{B}^r\left(\begin{aligned}
  &G^{s_r}(a_d,\dots,a_{d-s_r+1}),
  \dots,
  G^{s_{i+1}}(\dots,a_{s_i+\cdots+s_i+1}),
  \\
  &\quad\eta^{s_i}(a_{s_i+\cdots+s_i},\dots,a_{s_{i-1}+1}),\\
  &\qquad F^{s_{i-1}}(a_{s_1+\cdots+s_{i-1}},\dots),\dots,
  F^{s_1}(a_{s_1},\dots,a_1)
\end{aligned}
\right) \\
&\qquad -
\sum_{m,n}
(-1)^{\maltese_n+|\eta|-1}
\eta^{d-m+1}(
  a_d,\dots,a_{n+m+1},
  \mu_\mathcal{A}^m(a_{n+m},\dots,a_{n+1}),
  a_n,\dots,a_1
),
\end{aligned}
\end{equation}
where $\dagger=(|\eta|-1)(|a_1|+\cdots+|a_{s_1+\cdots+s_{i-1}}|-s_1-\cdots -s_{i-1})$.
\begin{definition}
  A degree $g$ $A_\infty$-natural transformation $\eta\colon F\Rightarrow G$ is a pre-natural transformation $\eta\in\Hom^g_{\Fun(\mathcal{A},\mathcal{B})}(F,G)$ satisfying the cocycle condition
  \begin{equation}
    \mu_{\Fun(\mathcal{A},\mathcal{B})}^1(\eta)=0.
  \end{equation}
\end{definition}

When $\mathcal{B}=\mathcal{A}$ and $F=\mathrm{id}_\mathcal{A}$, the cochain complex $(\Hom^\bullet_{\Fun(\mathcal{A},
\mathcal{A})}(\mathrm{id},G),\mu_{\Fun(\mathcal{A},\mathcal{A})}^1)$ is the Hochschild cochain $\mathrm{CC}^\bullet(G)$, and natural transformations are the Hochschild cocycles.

The Hochschild cohomology $\mathrm{HH}^\bullet(G)=H^\bullet\Hom (\mathrm{id},G)$ captures the essential structure of the space of natural transformations. In the following sections, we will first determine $\mathrm{HH}^\bullet(\mathrm{id})$ and $\mathrm{HH}^\bullet(\bim)$, and then write out the details of the natural transformations.

By Morita equivalence, we have
\begin{equation}
  \Hom_{\Fun(\mathrm{Tw}^b(\mathcal{C}),\mathrm{Tw}^b(\mathcal{C}))}(F,G)
  \simeq 
  \mathbf{RHom}_{\mathcal{C}\text{-mod-}\mathcal{C}}(B_F,B_G).  
\end{equation}

Hence the Hochschild cohomology may be computed as the derived Hom complex from the diagonal bimodule to the target bimodule.
To carry out this computation, one needs a projective resolution of the diagonal bimodule.
The Hochschild complex is defined canonically on the bar resolution:
\begin{definition}\label{def:Bar}
The \emph{bar resolution} of the diagonal bimodule 
$\Delta_{\mathcal{C}}$ is the $A_\infty$-bimodule complex
\begin{equation}
  \mathrm{Bar}_d(\mathcal{C})
  = (\mathcal{C}[1])^{\otimes d},
\end{equation}
whose elements are written as tuples $(a_d,\dots,a_1)$.
The differential
\begin{equation}
  \partial_{\mathrm{Bar}}\colon \mathrm{Bar}_d(\mathcal{C}) \to \mathrm{Bar}_{d-1}(\mathcal{C})
\end{equation}
is defined by inserting the $A_\infty$-structure maps $\mu_{\mathcal{C}}^k$ 
into all consecutive subsequences:
\begin{equation}\label{eq:bar-diff}
\begin{aligned}
  \partial_{\mathrm{Bar}}(a_d,\dots,a_1)
  &= 
  \sum_{m,n}
  (-1)^{\maltese_n}
  (a_d,\dots,a_{m+n+1},
  \mu_{\mathcal{C}}^m(a_{m+n},\dots,a_{n+1}),
  a_n,\dots,a_1),
\end{aligned}
\end{equation}
and
$\mathrm{Bar}_\bullet(\mathcal{C}) \xrightarrow{\ \epsilon\ }\Delta_{\mathcal{C}}$
is a projective resolution of the diagonal bimodule.
\end{definition}

Computing the Hochschild cohomology is known to be a hard problem because, although the bar resolution is always available in principle, its size $\mathrm{Bar}_n=\mathcal{C}\otimes(\mathcal{C}[1])^{\otimes n}\otimes \mathcal{C}$ grows exponentially with $n$, making it computationally infeasible in practice.  
The main drawback of the bar resolution is that it ignores the specific algebraic relations present in $\mathcal{C}$.
To obtain a more tractable model for actual calculations, one seeks a projective resolution of the diagonal bimodule that makes better use of the intrinsic structure of $\mathcal{C}$.

\section{The Chouhy--Solotar resolution}
In 1997, M.~Bardzell constructed a projective resolution for monomial algebras~\cite{B}. 
Later, in 2014, S.~Chouhy and A.~Solotar extended this construction to quiver algebras with relations~\cite{CS}.
As we will see momentarily, for our category $\mathcal{C}$, the main advantage of this resolution is that the size of each projective bimodule stabilizes as $n$ increases, making the computation of Hochschild cohomology feasible in all degrees.
We summarize the Chouhy--Solotar resolution below.

\subsection{Reduction system}
Let $Q$ be a finite quiver with $Q_0$ the set of vertices and $Q_1$ the set of arrows. Denote by $Q_n$ the set of paths of length $n$. To be consist with the notations in $A_\infty$-categories, our convention for a path $u=u_{k-1}\dots u_1u_0\in Q_k$ where $u_i\in Q_1$ is composing from the right to left, i.e. $\begin{tikzcd}[column sep=small]
	\bullet&\bullet\cdots\bullet\ar[l,"u_{k-1}"']&\bullet\ar[l,"u_1"']&\bullet\ar[l,"u_0"']
\end{tikzcd}$.
We write $vu$ to denote the concatenation of paths, that is, 
the path obtained by connecting the target of $u$ with the source of $v$. 
We write $v \subset u$ if $v$ is a subpath of $u$.

\begin{definition}
The \emph{path algebra} $\Bbbk Q$ of a quiver $Q$ over a field $\Bbbk$ 
is the tensor algebra of the vector space $\Bbbk Q_1$ of arrows over the semisimple algebra 
$\Bbbk Q_0$ spanned by the vertices:
\begin{equation}
  \Bbbk Q
  \coloneqq  T_{\Bbbk Q_0}(\Bbbk Q_1)
  = \Bbbk Q_0 \oplus
  \bigoplus_{n\ge 1}(\Bbbk Q_1)^{\otimes_{\Bbbk Q_0} n}
  =: \bigoplus_{n\ge 0}\Bbbk Q_n.
\end{equation}
Each $\Bbbk Q_n$ is the $\Bbbk$-span of paths of length $n$, and multiplication is given by concatenation of paths whenever it is defined.
\end{definition}

Let $I\subset \Bbbk Q$ be a two-sided ideal, referred to as the \emph{relations}.  
The quotient $A=\Bbbk Q/I$ is called the \emph{path algebra of a quiver with relations}.  
We denote by $\pi:\Bbbk Q\to A$ the canonical projection and by $i:A\hookrightarrow \Bbbk Q$ the inclusion of a chosen section.

\begin{definition}
A \emph{reduction system} $R$ for $\Bbbk Q$ consists of a collection of pairs
\begin{equation}
  R=\{(s,\varphi_s)\mid s\in S,\ \varphi_s\in \Bbbk Q\},
\end{equation}
where
\begin{itemize}
  \item $S$ is a subset of paths of length at least $2$, 
  and distinct elements of $S$ share no common subpaths.
  \item For each $s\in S$, the elements $s$ and $\varphi_s$ are \emph{parallel}, meaning they have the same source and target.
  \item Each $\varphi_s$ is \emph{irreducible}, that is, a $\Bbbk$-linear combination of irreducible paths.
\end{itemize}
\end{definition}

Here, a path is called \emph{irreducible} if it does not contain any element of $S$ as a subpath.
We denote by
\[
\mathrm{Irr}_S(Q)
= Q_\bullet \setminus Q_\bullet S Q_\bullet
\]
the set of all such irreducible paths.
Each element of $\mathrm{Irr}_S(Q)$ serves as a representative of a coset in the quotient algebra
$A=\Bbbk Q/I$.
Hence, as a $\Bbbk$-vector space we have the natural isomorphism
\begin{equation}\label{cong}
  \Bbbk\mathrm{Irr}_S(Q)\cong A=\Bbbk Q/I.
\end{equation}

Given a reduction system $R$, we associate to it the ideal of relations
\[
I=\langle s-\varphi_s \mid s\in S\rangle
\]
and the corresponding path algebra with relations
$A=\Bbbk Q/I$.

\begin{remark}
Following \cite{BW}, we write $\otimes$ for $\otimes_{\Bbbk Q_0}$ and
$\mathrm{Hom}$ for $\mathrm{Hom}_{\Bbbk Q_0^{\mathrm e}}$, 
where $\Bbbk Q_0^{\mathrm e}=\Bbbk Q_0\otimes_{\Bbbk}\Bbbk Q_0^{\mathrm{op}}$.
\end{remark}

\begin{definition}
Finally, we define a relation~$\preceq$ on the set
\[
  \Bbbk^\times Q_{\ge 0} \coloneqq  \{\lambda p \mid \lambda\in\Bbbk^\times,\; p\in Q_{\ge 0}\}\cup\{0\}.
\]
We define~$\preceq$ as the least reflexive and transitive relation satisfying that
$\lambda p \preceq \mu q$ whenever the $\mu q$ can be reduced, through a finite (including length 0) sequence of applications of the reduction system, that is, by successively replacing a subpath $s \subset q$ with $\varphi_s$ for some $s \in S$, to a linear combination $\lambda p + x$ in which $p$ does not appear among the terms of $x$.
In addition, we write $0 \preceq \lambda p$ for all $\lambda p\in \Bbbk^\times Q_{\ge 0}$.
\end{definition}

\subsection{Ambiguities}

\begin{definition}
A \emph{(right) $n$-ambiguity} is a path $u=u_{n+1}\cdots u_1u_0$ satisfying the following conditions:
\begin{itemize}
  \item For every $i$, the concatenation $u_{i+1}u_i$ belongs to $S$;
  \item For every factorization $u_{i+1}=rq$ with $r\ne 0$, one has $qu_i \notin S$.
\end{itemize}
\end{definition}

\begin{remark}
Left $n$-ambiguities are defined analogously.  
It can be shown that left and right ambiguities are equivalent notions.
\end{remark}

Such paths are called \emph{ambiguities} because they admit multiple possible reductions.  
A reduction system is said to be \emph{reduction unique} if all possible reduction sequences of a path yield the same irreducible result.  
In what follows, all quivers under consideration are assumed to be reduction unique.

For convenience, we set $S_0=Q_0$, $S_1=Q_1$, $S_2=S$, and for $n>2$, 
let $S_n$ denote the set of $(n-2)$-ambiguities.

\begin{definition}
For each $n\ge 0$, define a $\Bbbk$-linear map
\[
  \mathrm{split}_n:\Bbbk Q\longrightarrow A\otimes \Bbbk S_n\otimes A
\]
by the formula
\begin{equation}
  \mathrm{split}_n(w)
  \coloneqq  \sum_{\substack{w=urv\\r\in S_n}}
  \pi(u)\otimes r\otimes \pi(v),
\end{equation}
where the sum runs over all decompositions of $w$ through a subpath $r\in S_n$. In addition, define
\begin{align}
  &\mathrm{split}^\mathrm{R}(w)=\pi(u_{\mathrm{R}})\otimes r_{\mathrm{R}}\otimes \pi(v_{\mathrm{R}})\\
  &\mathrm{split}^\mathrm{L}(w)=\pi(u_{\mathrm{L}})\otimes r_{\mathrm{L}}\otimes \pi(v_{\mathrm{L}})
\end{align}
where $u_\mathrm{R}r_\mathrm{R}v_\mathrm{R}=u_{\mathrm{L}}r_\mathrm{L}v_{\mathrm{L}}=w$, and $r_\mathrm{R}$ (resp. $r_\mathrm{L}$) is the rightmost (resp. leftmost) subpath of $w$ which lies in $S$.

\end{definition}

\subsection{The projective resolution}\label{1.3}
\label{sect:CSresolution}

We now describe the Chouhy--Solotar projective resolution of $A=\Bbbk Q/I$.
It takes the form of a chain complex
\begin{equation}\label{eq:CScplx}
  \cdots \xrightarrow{\partial_{n+1}} 
  P_n \xrightarrow{\partial_n} 
  P_{n-1} \xrightarrow{\partial_{n-1}} \cdots
  \xrightarrow{\partial_2} 
  P_1 \xrightarrow{\partial_1} 
  P_0 \xrightarrow{\partial_0} 
  A \to 0,
\end{equation}
where $P_n=A\otimes \Bbbk S_n\otimes A$.  
In particular,
\[
  P_0
  =A\otimes \Bbbk S_0\otimes A
  =A\otimes_{\Bbbk Q_0}\Bbbk Q_0\otimes_{\Bbbk Q_0}A
  =A\otimes_{\Bbbk Q_0}A.
\]
Each $P_n$ is an $(A,A)$-bimodule, with left and right actions given by
\begin{equation}
  x\cdot(u\otimes w\otimes v)\cdot y
  \coloneqq  \pi(xu)\otimes w\otimes \pi(vy),
\end{equation}
for $x,y,u,v\in A$ and $w\in \Bbbk S_n$.

\begin{definition}
  \mathleft
  Define maps $\delta_n: P_n \longrightarrow P_{n-1}$ by 
\begin{gather}
  \delta_0(x\otimes y)
  =\pi(xy),\\
  \delta_1(x\otimes w\otimes y)
  =x\cdot (1\otimes w - w\otimes 1)\cdot y
  =x\otimes \pi(wy)-\pi(xw)\otimes y,\\
  \delta_n(x\otimes w\otimes y)
  =
  \begin{cases}
  x\cdot (\mathrm{split}_{n-1}^{\mathrm L}(w)
   -\mathrm{split}_{n-1}^{\mathrm R}(w))\cdot y,
  & n\ge 2\text{ odd},\\[4pt]
  x\cdot \mathrm{split}_{n-1}(w)\cdot y,
  & n\ge 2\text{ even}.
  \end{cases}\label{delta}
\end{gather}
\end{definition}
\begin{definition}
  For $\mu w\in \Bbbk^\times Q_{\ge 0}$, define subsets of $P_n$
  \begin{equation}
    \bar{\mathcal{L}}^{\prec}_n(\mu w)\coloneqq \{
      \lambda u\otimes r\otimes v\mid u,v\in A,\, \lambda urv\prec\mu w
    \}.
  \end{equation}
\end{definition}

One of the main theorem of \cite{CS} is,
\begin{theorem}\label{thm:4.1}
If $\partial_\bullet$ satisfies the following conditions
\begin{itemize}
  \item $\partial_{i-1}\comp\partial_i=0$, $\forall i\in \Z_{\ge 0}$;
  \item $(\partial_i-\delta_i)(1\otimes w\otimes 1)\in\langle\bar{\mathcal{L}}^{\prec}_{i-1}(w)\rangle_{\Bbbk}$, $\forall i\in \Z_{\ge 0}$ and $\forall w\in S_i$;
\end{itemize}
then the complex \eqref{eq:CScplx} is exact.
\end{theorem}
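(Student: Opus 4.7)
The plan is a filtration argument that reduces the theorem to Bardzell's resolution~\cite{B} for the associated monomial algebra.

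First, I would equip each $P_n$ with an increasing filtration indexed by $\preceq$. For $w\in Q_{\ge 0}$ set
$$F^{\preceq w}P_n:=\operatorname{span}_{\Bbbk}\bar{\mathcal{L}}^{\preceq}_n(w),$$
which is an $(A,A)$-subbimodule. Hypothesis (ii) says $(\partial_i-\delta_i)(1\otimes w\otimes 1)\in F^{\prec w}P_{i-1}$, so $\partial_i$ preserves the filtration with associated-graded piece equal to $\delta_i$. Hypothesis (i), $\partial^2=0$, then makes $(P_\bullet,\partial_\bullet)$ into an honest filtered complex.

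Next, I would identify the associated graded. The leading term (in $\preceq$) of each defining relation $s-\varphi_s$ is the monomial $s$, so the associated graded of $A=\Bbbk Q/I$ is the monomial algebra $A^{\mathrm{gr}}=\Bbbk Q/\langle S\rangle$. A combinatorial check, using the tight-overlap condition built into the definition of an $(n-2)$-ambiguity $w=u_{n-1}\cdots u_0$, shows that $(P_\bullet^{\mathrm{gr}},\delta_\bullet)$ coincides with Bardzell's bimodule resolution of $A^{\mathrm{gr}}$ built on the ambiguity sets $S_n$.

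By Bardzell's theorem~\cite{B}, the associated graded complex is exact. Finally, I would run the spectral sequence of the filtration. Reduction-uniqueness guarantees that every reduction sequence of a fixed path terminates in finitely many steps, so $\preceq$ is well-founded on paths of bounded length; consequently, on each $P_n$ the filtration is exhaustive and bounded below. The spectral sequence therefore converges, and $E_1$-exactness from Bardzell lifts to exactness of $(P_\bullet,\partial_\bullet)$, which is what we need.

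The main obstacle is the second step: matching $\delta_\bullet$ with Bardzell's differential combinatorially. In odd degrees, $\delta_n=\mathrm{split}^{\mathrm{L}}_{n-1}-\mathrm{split}^{\mathrm{R}}_{n-1}$ extracts only the leftmost and rightmost $S_{n-1}$-subpaths, whereas an $(n-2)$-ambiguity may in principle contain several overlapping $S_{n-1}$-subpaths. The ambiguity axiom---every consecutive pair $u_{i+1}u_i$ lies in $S$, yet no proper suffix of $u_{i+1}$ juxtaposed with $u_i$ lies in $S$---is tailored precisely so that interior $S_{n-1}$-subpaths either cancel in pairs or are forbidden, leaving exactly the two boundary contributions. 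Checking this cancellation, together with the analogous even-degree matching $\delta_n=\mathrm{split}_{n-1}$, is the main combinatorial content of the proof; once it is in place, the rest of the argument is formal.
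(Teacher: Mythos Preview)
The paper does not prove this theorem at all: it is quoted as ``one of the main theorems of~\cite{CS}'' and immediately followed by a sketch of the Chouhy--Solotar construction, in which exactness is established by building explicit contracting homotopies $\rho_{n-1}\colon P_{n-1}\to P_n$ recursively as a geometric series $\rho_{n-1}=\gamma_{n-1}+\sum_{i\ge1}\gamma_{n-1}(\delta_n\gamma_{n-1}-\partial_n\gamma_{n-1})^i$. So the comparison is between your approach and that of~\cite{CS} as summarized in the paper.

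Your filtration/spectral-sequence route is a genuinely different argument. The CS proof is constructive: it produces a contracting homotopy directly, and that homotopy is reused later in the paper (Section~8) to build the deformation retract $G_\bullet$ from the bar resolution. Your approach is more conceptual---it explains exactness as a deformation of Bardzell's monomial resolution---but it does not hand you $\rho_\bullet$, so for the purposes of this paper it would need to be supplemented by the CS construction anyway. Regarding the ``main obstacle'' you flag: the tight-overlap axiom in the definition of an $(n-2)$-ambiguity $w=u_{n-1}\cdots u_0$ forces $w$ to contain exactly two $S_{n-1}$-subpaths, namely $u_{n-1}\cdots u_1$ and $u_{n-2}\cdots u_0$, so $\mathrm{split}^{\mathrm L}_{n-1}$ and $\mathrm{split}^{\mathrm R}_{n-1}$ already exhaust them and no further cancellation is needed; this is less delicate than you suggest. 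One point you should tighten: $\preceq$ is only a preorder on $\Bbbk^\times Q_{\ge0}$, so the filtration must be indexed by its quotient poset, and you need reduction-uniqueness (assumed in the paper) both for $A\cong\Bbbk\,\mathrm{Irr}_S(Q)$ as vector spaces and for well-foundedness of the filtration.
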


The paper \cite{CS} further proved for a reduction system satisfying some conditions, such $\partial_\bullet$ always exists and gave an explicit construction, where the differentials $\partial_n$ are constructed from the auxiliary maps $\delta_n$ and $\gamma_{n-1},\rho_{n-1}: P_{n-1}\to P_n$, where $\gamma_{n-1}$ is defined by
\mathleft
\begin{gather}
  \gamma_{-1}(x)=1\otimes x,\\
  \gamma_{n-1}(x\otimes w\otimes y)
  =(-1)^n\,  \mathrm{split}_n(xw)\cdot y.
\end{gather}

And the differentials $\partial_n$ and the homotopies $\rho_{n-1}$ are then defined recursively by
\begin{gather}
  \partial_{-1}=0,\qquad \rho_{-2}=0,\\
  \partial_n(x\otimes w\otimes y)
  =x\cdot
  \big((\mathrm{id}-\rho_{n-2}\partial_{n-1})
  \delta_n\big)(1\otimes w\otimes 1)
  \cdot y,\label{par}\\
  \rho_{n-1}
  =\gamma_{n-1}
  +\sum_{i\ge1}
  \gamma_{n-1}
  (\delta_n\gamma_{n-1}-\partial_n\gamma_{n-1})^i.\label{rho}
\end{gather}

\begin{proposition}
For $n=0$, equation~\eqref{par} yields
\begin{align*}
  \partial_0(x\otimes w\otimes y)
  &=x\cdot 
  \big((\mathrm{id}-\rho_{-2}\partial_{-1})
  \delta_0\big)
  (1\otimes w\otimes 1)\cdot y
  \\
  &=x\cdot \delta_0(1\otimes w\otimes 1)\cdot y
  =\pi(xy),
\end{align*}
where $w\in Q_0$ is a vertex of the quiver (the target of $x$ and the source of $y$).  
Thus we have
\begin{equation}\label{par0}
  \partial_0(x\otimes y)
  =\delta_0(x\otimes y)
  =\pi(xy).
\end{equation}
\end{proposition}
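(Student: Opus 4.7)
The plan is to unwind the recursive definition (\ref{par}) in the base case $n=0$; this proposition is essentially a sanity check that the recursion reproduces the expected formula (\ref{par0}) at the beginning of the Chouhy--Solotar complex, so the proof is a short unfolding of definitions with no genuine content beyond bookkeeping.

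First I would invoke the initialization conventions stated just before (\ref{par}): namely $\partial_{-1}=0$ and $\rho_{-2}=0$. This immediately kills the correction term in (\ref{par}) at $n=0$, since $\rho_{-2}\partial_{-1}=0$ as a map, so $(\mathrm{id}-\rho_{-2}\partial_{-1})\delta_0=\delta_0$. Thus (\ref{par}) collapses to $\partial_0(x\otimes w\otimes y)=x\cdot\delta_0(1\otimes w\otimes 1)\cdot y$, which is the first equality displayed.

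Next I would apply the definition of $\delta_0$ to the argument $1\otimes w\otimes 1$. Here $w\in S_0=Q_0$ is a vertex, and under the identification $P_0=A\otimes\Bbbk Q_0\otimes A\cong A\otimes_{\Bbbk Q_0} A$ the element $1\otimes w\otimes 1$ corresponds to the vertex idempotent $e_w\otimes e_w$. By the defining formula $\delta_0(x\otimes y)=\pi(xy)$ this yields $\delta_0(1\otimes w\otimes 1)=\pi(e_w)=e_w$, and the bimodule action produces $x\cdot e_w\cdot y=xy$ inside $A$, using that the target of $x$ and the source of $y$ both equal $w$ by composability. Reassembling gives $\partial_0(x\otimes w\otimes y)=\pi(xy)$, matching $\delta_0$ as claimed.

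There is no real obstacle; the only point worth verifying carefully is the convention that the symbols $1$ flanking $w$ stand for the idempotent $e_w$ under the relevant $\Bbbk Q_0$-balanced tensor structure, which is exactly what makes $P_0$ agree with $A\otimes_{\Bbbk Q_0}A$. Once that identification is in place, the proposition is immediate from (\ref{par}) and the base-case vanishing $\partial_{-1}=0$.
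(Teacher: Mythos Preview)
Your proposal is correct and follows exactly the same approach as the paper: the proof in the paper is embedded in the proposition statement itself and consists of nothing more than invoking the initializations $\partial_{-1}=0$, $\rho_{-2}=0$ to collapse \eqref{par} to $\delta_0$, then reading off $\delta_0(x\otimes y)=\pi(xy)$. Your additional remarks about the identification $P_0\cong A\otimes_{\Bbbk Q_0}A$ and the role of the idempotent $e_w$ are careful elaborations of what the paper leaves implicit, but the argument is the same.
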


\medskip

For detailed proofs and further properties of this construction, we refer to~\cite{CS}.  

\begin{remark}
Briefly, the resolution extends the 1997 Bardzell resolution for monomial algebras~\cite{B}.  
The maps $\delta$ and $\gamma$ coincide with those in the monomial case.  
Indeed, if the algebra is monomial and $S\subset Q_2$, then $\delta^2=0$.  
Specifically,
\begin{align*}
  \delta&_{n-1}\delta_n(1\otimes w\otimes 1)\\
  &=\mathrm{split}_{n-2}^{\mathrm L}\mathrm{split}_{n-1}^{\mathrm L}(w)
   \pm\mathrm{split}_{n-2}^{\mathrm L}\mathrm{split}_{n-1}^{\mathrm R}(w)
   \mp\mathrm{split}_{n-2}^{\mathrm R}\mathrm{split}_{n-1}^{\mathrm L}(w)
   +\mathrm{split}_{n-2}^{\mathrm R}\mathrm{split}_{n-1}^{\mathrm R}(w)\\
  &=\mathrm{split}_{n-2}^{\mathrm L}\mathrm{split}_{n-1}^{\mathrm L}(w)
   +\mathrm{split}_{n-2}^{\mathrm R}\mathrm{split}_{n-1}^{\mathrm R}(w)
   \\
   &=0,
\end{align*}
since the middle two terms cancel, and both the first and last terms factor through monomials in $S$, which vanish in $A$.  
For non-monomial algebras, however, $\delta^2\ne0$, and the differentials must be corrected to $\partial_n$ as above so that $\partial^2=0$.  
\end{remark}

\mathcenter

\section{Projective resolution of the diagonal bimodule}
\subsection{Category $\mathcal{C}$ as a quiver with relations}
We can consider our KLRW category $\mathcal{C}$ of $T_i$ branes as a quiver with relations, where the vertices are the idempotents $e_i$, and the arrows are taken to be $s_i$, $p_i$ and $q_i$ defined in Definition~\ref{def:spq}:

\begin{figure}[H]
  \centering
	\begin{tikzcd}[column sep=large]
		\underset{e_0}{\bullet}\ar[r,bend left=10, "p_1"]  \ar[looseness=8, out=120, in=60,"s_0"near end] 
		& \underset{e_1}{\bullet} \ar[l,bend left=10,"q_0"] \ar[r,bend left=10, "p_2"]\ar[looseness=8, out=120, in=60,"s_1"near end]
		&\underset{e_2}{\bullet} \ar[l,bend left=10,"q_1"] \ar[r,bend left=10, "p_3"] \ar[looseness=8, out=120, in=60,"s_2"near end]
		&\underset{e_3}{\bullet} \ar[l,bend left=10,"q_2"] \ar[r,bend left=10, "p_4"] \ar[looseness=8, out=120, in=60,"s_3"near end]
		&\underset{e_4}{\bullet} \ar[l,bend left=10,"q_3"] \ar[looseness=8, out=120, in=60,"s_4"near end]
	\end{tikzcd}  
  \caption{Category $\mathcal{C}$ as a quiver $Q$}
\end{figure}
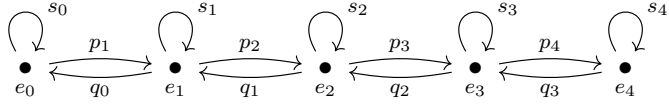
\begin{align}
	Q_0&=\left\{ e_i \right\} \\
	Q_1&=\left\{ p_i, q_i, s_i \right\} 
\end{align}

The reduction system corresponding to the KLRW algebra comes from the relations in Fig.~\ref{fig:KLRW}, where we need to specify one diagram in each relation to be irreducible:
\begin{equation}
  R=\left\{
    \left( q_{i}p_{i+1},s_i \right),\;
    \left( p_{i}q_{i-1},s_i \right),\;
    \left( s_ip_i,p_is_{i-1} \right),\;
    \left( s_iq_i,q_is_{i+1} \right)
  \right\}.
\end{equation}

We will refer to the first two relations as \emph{type I}, and the others as \emph{type II}.
Here we have adopted the convention that a path is considered \emph{irreducible} if its corresponding strand diagram
has all dots placed at the bottom and each black strand is monotone in the horizontal direction,
that is, it never turns back after crossing a red strand. The set $S$ is therefore
\begin{equation}
	S=\left\{ 
    q_{i}p_{i+1},\; p_{i}q_{i-1},\; s_ip_i,\; s_iq_i
	 \right\} 
\end{equation}

Since $S \subset Q_2$, any $n$-ambiguity $u \in S_{n+2}$ must arise from consecutive overlaps of minimal reducible subpaths without intermediate gaps. 
In other words, such a path has the form $u = u_{n+1}\dots u_1 u_0$ with $u_i \in Q_1$ and $ u_{i+1}u_i \in S$, hence $S_n \subset Q_n$. 
Moreover, note that the dots $s_i$ can only terminate a path in $S$. 
Consequently, a dot can appear only at the end of an $n$-ambiguity. 
There are therefore exactly two distinct families of $n$-ambiguities:

\begin{itemize}
  \item \textbf{Type~I ambiguities}: paths oscillating back and forth around a fixed red strand,
  \begin{itemize}
    \item which are in the form of $p_{i+1}q_{i}p_{i+1}q_{i}p_{i+1}q_{i}\dots$, or $q_{i}p_{i+1}q_{i}p_{i+1}q_{i}p_{i+1}\dots$;
  \end{itemize}

  \item \textbf{Type~II ambiguities}: sequences ending at a dot after oscillating around a fixed red strand,
  \begin{itemize}
    \item which are in the form of $s_{i+1}p_{i+1}q_ip_{i+1}q_ip_{i+1}\dots$, or $s_iq_{i}p_{i+1}q_{i}p_{i+1}q_{i}\dots$.
  \end{itemize}
\end{itemize}

\begin{figure}[H]
  \centering
  \begin{subfigure}[b]{0.45\textwidth}
  \centering
	\begin{tikzpicture}[scale=.7]
  \draw[M,line width=1pt] (0.5,-1.5) -- (0.5,6.7);

  \draw[black,line width=1pt]
    (1,-1.5) -- (1,-1.3) 
    to[out=90,in=-90] (0,0)
    to[out=90,in=-90] (1,1.3)
    to[out=90,in=-90] (0,2.6)
    to[out=90,in=-90] (1,3.9)
    to[out=90,in=-90] (0,5.2)
    to[out=90,in=-90] (1,6.5) -- (1,6.7);
  \end{tikzpicture}
  \caption{Type I ambiguity}
  \end{subfigure}
  \begin{subfigure}[b]{0.45\textwidth}
  \centering
	\begin{tikzpicture}[scale=.7]
  \draw[M,line width=1pt] (0.5,-1.5) -- (0.5,6.7);
  \draw[black,line width=1pt]
    (1,-1.5) -- (1,-1.3) 
    to[out=90,in=-90] (0,0)
    to[out=90,in=-90] (1,1.3)
    to[out=90,in=-90] (0,2.6)
    to[out=90,in=-90] (1,3.9)
    to[out=90,in=-90] (0,5.2) -- (0,6.7);
    \fill (0,5.85) circle (3pt);
  \end{tikzpicture}
  \caption{Type II ambiguity}
  \end{subfigure}
  \caption{Two 4-ambiguities in $S_6$}
  \label{fig:ambigue}
\end{figure}
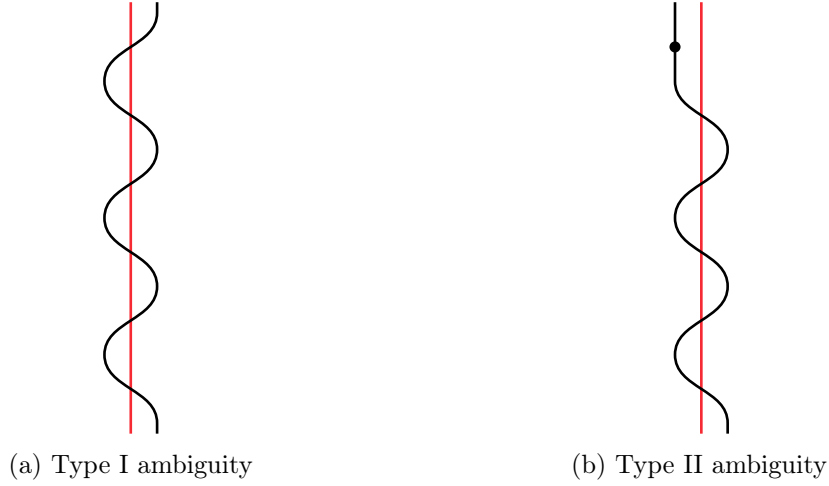

\begin{definition}
  To streamline notation, we denote the ambiguities in $S_n$ as
  \begin{itemize}
    \item Type I ambiguities:\begin{align}
      P^n_{i+1}&\coloneqq p_{i+1}q_{i}p_{i+1}q_{i}p_{i+1}q_{i}\dots\\
      Q^n_{i}&\coloneqq q_{i}p_{i+1}q_{i}p_{i+1}q_{i}p_{i+1}\dots
    \end{align}
    \item Type II ambiguities:\begin{align}
      sP^n_{i+1}&\coloneqq s_{i+1}p_{i+1}q_ip_{i+1}q_ip_{i+1}\dots\\
      sQ^n_{i}&\coloneqq s_iq_{i}p_{i+1}q_{i}p_{i+1}q_{i}\dots
    \end{align}
  \end{itemize}
  This should not cause any confusion with the projective module $P_n$ or the set of paths $Q_n$, as the ambiguities carry two indices.
\end{definition}

We have for the Coulomb branch with $|\mathbf{x}|$ punctures

\begin{equation}
  |S_n| =
  \begin{cases}
    |\mathbf{x}|, & n = 0,\\[4pt]
    3|\mathbf{x}|+1, & n = 1,\\[4pt]
    4|\mathbf{x}|, & n \ge 2.
  \end{cases}
  \label{28}
\end{equation}
For example, in the 4-puncture case, we have $|S_0|=5$, $|S_1|=13$, and $|S_{\ge 2}|=16$.

\begin{remark}
Even before actually starting the computation of the projective resolution, two key advantage of the Chouhy--Solotar resolution already becomes apparent:
\begin{itemize}
  \item The size of each projective bimodule $P_n = A \otimes \Bbbk S_n \otimes A$ remains constant for all $n \ge 2$, 
  whereas in the bar resolution, the size of $P_n$ grows exponentially with $n$. 
  Consequently, all Hochschild cohomology groups can be computed once and for all, since the resolution size does not grow with the cohomological degree $n$.

  \item Moreover, since all ambiguities involve only local paths between two adjacent vertices, 
  the resolution naturally decomposes into smaller, independent components for $n \ge 3$. 
  As a result, computations for arbitrarily many punctures $|\mathbf{x}|$ are uniformly tractable.
\end{itemize}
\end{remark}

\begin{definition}
  To be consistent with the notations in Section~\ref{sect:ajisk}, we denote paths $a_{ji}\in Q_{|i-j|}$
\begin{equation}
  a_{ji}\coloneqq \begin{cases}
    p_{j-1}p_{j-2}\cdots p_{i+1}p_i, & i<j,\\
    e_i, & i=j,\\
    q_{j+1}q_{j+2}\cdots q_{i-1}q_i, & i>j.
  \end{cases}
\end{equation}
\end{definition}

Thus, all irreducible paths have the form of $a_{ji}s^\alpha_i$. The algebra $A=\Bbbk Q/I$, is generated by these irreducible paths, and has multiplication parallel to $\mu^2$ in (\ref{eq:mu2})
\begin{equation}
  a_{kj} s^\beta_j\cdot a_{ji}s^{\alpha}_i = a_{ki} s^{\beta+\alpha +\delta (i,j,k)}_i
\end{equation}
where $\delta(i,j,k)$ is given in (\ref{eq:deltaijk}).

\subsection{Result from the Chouhy--Solotar resolution}

\begin{theorem}
The result of Chouhy--Solotar resolution for our quiver algebra is
	\begin{equation}\label{eq:resolution}
			\cdots \xrightarrow{\partial _{n+1}}P_n\xrightarrow{\partial _n}P_{n-1}\xrightarrow{\partial _{n-1}}\cdots \xrightarrow{\partial _2}P_1\xrightarrow{\partial _1}P_0\xrightarrow{\partial _0}A\rightarrow 0
	\end{equation}
	where $P_n=A\otimes\Bbbk S_n\otimes A$, and $\partial_n$ act by
  \mathleft
  \begin{itemize}
    \item $0\le n\le 1$,
  	\begin{gather}
    \partial _0\left( 1\otimes 1 \right) =1
    \\
    \partial _1\left( 1\otimes w\otimes 1 \right) =1\otimes w-w\otimes 1,\quad \forall w\in Q_1
    \end{gather}
    \item $n\ge 2$ even,
    \begin{gather}
    \partial _{n}\left( 1\otimes Q^{n}_{i}\otimes 1 \right) \label{eq:71}
    \\
    \qquad =1\otimes Q^{n-1}_{i}\otimes p_{i+1}+q_{i}\otimes P^{n-1}_{i+1}\otimes 1-1\otimes sQ^{n-1}_i\otimes 1\notag
    \\
    \partial _{n}( 1\otimes sP^{n}_{i+1}\otimes 1 )
    \\
    \qquad =1\otimes sP^{n-1}_{i+1} \otimes p_{i+1}+s_{i+1}\otimes P_{i+1}^{n-1}\otimes 1 -p_{i+1}\otimes sQ^{n-1}_i\otimes 1-1\otimes P_{i+1}^{n-1}\otimes s_i\notag
    \end{gather}
      \item $n\ge 2$ odd,
    \begin{gather}    
    \partial _{n}\left( 1\otimes Q^{n}_{i}\otimes 1 \right) 
    \\
    \qquad =1\otimes Q^{n-1}_{i}\otimes q_{i}-q_{i}\otimes  P^{n-1}_{i+1}\otimes 1+1\otimes sQ^{n-1}_i\otimes 1\notag
    \\
    \partial _{n}\left( 1\otimes sP^{n}_{i+1}\otimes 1 \right) \label{eq:74}
    \\
    \qquad =1\otimes sP^{n-1}_{i+1}\otimes q_i-s_{i+1}\otimes P^{n-1}_{i+1}\otimes 1 +p_{i+1}\otimes sQ^{n-1}_i\otimes 1+1\otimes P_{i+1}^{n-1}\otimes s_{i+1}\notag
	\end{gather}
  \end{itemize}
  and the equations from \eqref{eq:71}--\eqref{eq:74} by the replacement $P\leftrightarrow Q$, $i\leftrightarrow i+1$.
\end{theorem}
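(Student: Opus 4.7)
The plan is to invoke Theorem~\ref{thm:4.1}, which reduces the task to verifying two conditions on the proposed differentials: (i) $\partial_{n-1}\comp\partial_n=0$ for all $n\ge 1$, and (ii) for each generator $w\in S_n$, the correction $(\partial_n-\delta_n)(1\otimes w\otimes 1)$ lies in $\langle\bar{\mathcal{L}}^{\prec}_{n-1}(w)\rangle_\Bbbk$. This avoids executing the full recursive construction of \eqref{par}--\eqref{rho} and leverages the explicit combinatorial description of $S_n$ already obtained. By the symmetry $P\leftrightarrow Q$, $i\leftrightarrow i+1$ built into the statement, it suffices to treat the $Q^n_i$ and $sP^n_{i+1}$ families for each parity of $n$.

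First I would compute $\delta_n$ directly from its definition \eqref{delta}. For $w=Q^n_i=q_ip_{i+1}q_ip_{i+1}\cdots$, the subpaths of length $n-1$ that lie in $S_{n-1}$ are exactly two: dropping the trailing letter gives $Q^{n-1}_i$ (left split) and dropping the leading $q_i$ gives $P^{n-1}_{i+1}$ (right split), with respective complementary letters $p_{i+1}$ (on the right) and $q_i$ (on the left, for even $n$) or $q_i$ and $q_i$ (for odd $n$). This immediately produces the first two summands of \eqref{eq:71} with the signs dictated by the even/odd dichotomy in \eqref{delta}. The same reading applied to $sP^n_{i+1}$, whose length-$(n-1)$ subpaths in $S_{n-1}$ are precisely $sP^{n-1}_{i+1}$ (left) and $P^{n-1}_{i+1}$ (right), yields the first two summands of the $sP$-formulas.

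Next I would verify condition (ii) for the remaining correction terms. For $w=Q^n_i$, the single reduction $q_ip_{i+1}\mapsto s_i$ applied to the leftmost two letters rewrites $Q^n_i$ as $sQ^{n-1}_i$, so $sQ^{n-1}_i\prec Q^n_i$ and the correction $\mp 1\otimes sQ^{n-1}_i\otimes 1$ lies in the required submodule. For $w=sP^n_{i+1}$ the two corrections $\mp p_{i+1}\otimes sQ^{n-1}_i\otimes 1$ and $\mp 1\otimes P^{n-1}_{i+1}\otimes s_{i\,\text{or}\,i+1}$ arise by pushing the leading dot through the string via $s_{i+1}p_{i+1}\mapsto p_{i+1}s_i$ and $s_iq_i\mapsto q_is_{i+1}$: the first term is produced after one reduction step, and the second is the fully reduced form after $n-1$ steps (the parity of $n$ determining whether the dot lands as $s_i$ or $s_{i+1}$). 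Both are therefore $\prec sP^n_{i+1}$.

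The hard part is condition (i). Upon applying $\partial_{n-1}$ to each summand of $\partial_n(1\otimes w\otimes 1)$ and reducing the resulting length-two outer tensor factors using the relations $q_ip_{i+1}=p_iq_{i-1}=s_i$, $s_ip_i=p_is_{i-1}$, $s_iq_i=q_is_{i+1}$, I expect the terms to organize into cancelling pairs. For instance, for $w=Q^n_i$ with $n$ even, the term $1\otimes Q^{n-2}_i\otimes q_ip_{i+1}$ coming from $\partial_{n-1}(1\otimes Q^{n-1}_i\otimes p_{i+1})$ reduces to $1\otimes Q^{n-2}_i\otimes s_i$, which must annihilate against the contribution $-\partial_{n-1}(1\otimes sQ^{n-1}_i\otimes 1)$; analogous pairings handle the $q_i\otimes P^{n-2}_{i+1}\otimes\cdots$ and $sP$-branches. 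The delicate point is tracking the sign alternation between even and odd $n$ in \eqref{eq:71}--\eqref{eq:74} and ensuring the type~I and type~II terms pair correctly across parities, especially the fact that in the $sP$-row the reduced boundary dot is $s_i$ for even $n$ but $s_{i+1}$ for odd $n$. Once this bookkeeping is carried out for one family, the remaining three follow by the stated symmetry, and Theorem~\ref{thm:4.1} then yields the exactness of \eqref{eq:resolution}.
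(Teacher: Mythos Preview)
Your approach is essentially identical to the paper's: both invoke Theorem~\ref{thm:4.1}, noting that the correction terms satisfy the $\prec$ condition (the paper asserts this via~\eqref{eq:prec}, while you supply the explicit reductions producing $sQ^{n-1}_i$ and the dot-pushed forms), and then defer $\partial^2=0$ to direct computation. Your write-up is in fact more detailed than the paper's two-sentence proof, which simply states that condition~\eqref{eq:prec} handles the second hypothesis and that $\partial^2=0$ is ``straightforward.''
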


This might seem confusing at first glance, but the structure becomes simple when written diagrammatically. 
For $w\in S_n$, we have $\partial_n(1\otimes w\otimes 1)\in P_{n-1}=A\otimes \Bbbk S_{n-1}\otimes A$. 
Although the detailed construction of the resolution in Section~\ref{sect:CSresolution} is rather intricate, 
the operations involved in defining $\partial_n$ essentially consist of only splitting and reduction operations. 
Formally, we may write
\begin{equation}
  \partial_n(1\otimes w\otimes 1)=\sum_i \lambda_iu_i\otimes  r_i\otimes v_i,
\end{equation}
where $\lambda_i\in\Bbbk$, $u_i,v_i\in A$, and $r_i\in S_{n-1}$. 
Then we have
\begin{equation}\label{eq:prec}
  u_i r_i v_i \preceq w \quad \text{for all } i.
\end{equation}
It is easy to find out that the possible forms of $u_i r_i v_i$ are quite limited—namely, there are only three in the case of type~I ambiguities.

\begin{equation}\label{eq:partialI}
  \begin{tikzpicture}[scale=0.6]
  \draw[M,line width=1pt] (0.5,0) -- (0.5,7);
  \draw[gray] (-0.3,0.5)--(1.3,0.5);
  \draw[gray] (-0.3,6.5)--(1.3,6.5);
  \draw[black,line width=1pt]
    (1,0) -- (1,0.5) 
    to[out=90,in=-90] (0,1.5)
    to[out=90,in=-90] (1,2.5)
    to[out=90,in=-90] (0,3.5)
    to[out=90,in=-90] (1,4.5)
    to[out=90,in=-90] (0,5.5)
    to[out=90,in=-90] (1,6.5) -- (1,7);
  \draw[->,line width=1pt] (2,3.5) -- node[above, midway] {$\partial_n$} (4,3.5);

  \draw[M,line width=1pt] (5.5,0) -- (5.5,7);
  \draw[gray] (4.7,1)--(6.3,1);
  \draw[gray] (4.7,6)--(6.3,6);
  \draw[black,line width=1pt]
    (6,0)
    to[out=90,in=-90] (5,1)
    to[out=90,in=-90] (6,2)
    to[out=90,in=-90] (5,3)
    to[out=90,in=-90] (6,4)
    to[out=90,in=-90] (5,5)
    to[out=90,in=-90] (6,6) -- (6,7);
  \node at (7.5,3.5) {$+(-1)^n$};
  \draw[M,line width=1pt] (9.5,0) -- (9.5,7);
  \draw[gray] (8.7,1)--(10.3,1);
  \draw[gray] (8.7,6)--(10.3,6);
  \draw[black,line width=1pt]
    (10,0)--(10,1)
    to[out=90,in=-90] (9,2)
    to[out=90,in=-90] (10,3)
    to[out=90,in=-90] (9,4)
    to[out=90,in=-90] (10,5)
    to[out=90,in=-90] (9,6)
    to[out=90,in=-90] (10,7);
  \node at (11.5,3.5) {$-(-1)^n$};
  \draw[M,line width=1pt] (13.5,0) -- (13.5,7);
  \draw[gray] (12.7,1)--(14.3,1);
  \draw[gray] (12.7,6)--(14.3,6);
  \draw[black,line width=1pt]
    (14,0)--(14,1)
    to[out=90,in=-90] (13,2)
    to[out=90,in=-90] (14,3)
    to[out=90,in=-90] (13,4)
    to[out=90,in=-90] (14,5)-- (14,7);
  \fill[black] (14,5.5) circle (3pt);
  \node[text=gray] at (-2,0){$A$};
  \node[text=gray] at (-2,0.7){$\otimes$};
  \node[text=gray] at (-2,3.5){$\Bbbk S_n$};
  \node[text=gray] at (-2,6.3){$\otimes$};
  \node[text=gray] at (-2,7){$A$};
  \node[text=gray] at (16,0){$A$};
  \node[text=gray] at (16,1){$\otimes$};
  \node[text=gray] at (16,3.5){$\Bbbk S_{n-1}$};
  \node[text=gray] at (16,6){$\otimes$};
  \node[text=gray] at (16,7){$A$};
  \node[text=gray] at (20,3.5){$\phantom{\Bbbk S_{n-1}}$};
  \end{tikzpicture}
\end{equation}
and four options for type II ambiguities:
\begin{equation}\label{eq:partialII}
  \begin{tikzpicture}[scale=0.6]
  \draw[M,line width=1pt] (0.5,0) -- (0.5,7);
  \draw[gray] (-0.3,0.5)--(1.3,0.5);
  \draw[gray] (-0.3,6.5)--(1.3,6.5);
  \draw[black,line width=1pt]
    (1,0) -- (1,0.5) 
    to[out=90,in=-90] (0,1.5)
    to[out=90,in=-90] (1,2.5)
    to[out=90,in=-90] (0,3.5)
    to[out=90,in=-90] (1,4.5)
    to[out=90,in=-90] (0,5.5) -- (0,7);
  \fill[black] (0,6) circle (3pt);

  \draw[->,line width=1pt] (2,3.5) -- node[above, midway] {$\partial_n$} (4,3.5);

  \draw[M,line width=1pt] (5.5,0) -- (5.5,7);
  \draw[gray] (4.7,1)--(6.3,1);
  \draw[gray] (4.7,6)--(6.3,6);
  \draw[black,line width=1pt]
    (6,0)
    to[out=90,in=-90] (5,1)
    to[out=90,in=-90] (6,2)
    to[out=90,in=-90] (5,3)
    to[out=90,in=-90] (6,4)
    to[out=90,in=-90] (5,5) -- (5,7);
  \fill[black] (5,5.5) circle (3pt);

  \node at (7.5,3.5) {$+(-1)^n$};
  \draw[M,line width=1pt] (9.5,0) -- (9.5,7);
  \draw[gray] (8.7,1)--(10.3,1);
  \draw[gray] (8.7,6)--(10.3,6);
  \draw[black,line width=1pt]
    (10,0)--(10,1)
    to[out=90,in=-90] (9,2)
    to[out=90,in=-90] (10,3)
    to[out=90,in=-90] (9,4)
    to[out=90,in=-90] (10,5)
    to[out=90,in=-90] (9,6)--(9,7);
  \fill[black] (9,6.5) circle (3pt);

  \node at (11.5,3.5) {$-(-1)^n$};
  \draw[M,line width=1pt] (13.5,0) -- (13.5,7);
  \draw[gray] (12.7,1)--(14.3,1);
  \draw[gray] (12.7,6)--(14.3,6);
  \draw[black,line width=1pt]
    (14,0)--(14,1)
    to[out=90,in=-90] (13,2)
    to[out=90,in=-90] (14,3)
    to[out=90,in=-90] (13,4)
    to[out=90,in=-90] (14,5)-- (14,6)
    to[out=90,in=-90] (13,7);
  \fill[black] (14,5.5) circle (3pt);

  \node at (15.5,3.5) {$-(-1)^n$};
  \draw[M,line width=1pt] (17.5,0) -- (17.5,7);
  \draw[gray] (16.7,1)--(18.3,1);
  \draw[gray] (16.7,6)--(18.3,6);
  \draw[black,line width=1pt]
    (18,0)--(18,1)
    to[out=90,in=-90] (17,2)
    to[out=90,in=-90] (18,3)
    to[out=90,in=-90] (17,4)
    to[out=90,in=-90] (18,5)
    to[out=90,in=-90] (17,6)--(17,7);
  \fill[black] (18,0.5) circle (3pt);

  \node[text=gray] at (-2,0){$A$};
  \node[text=gray] at (-2,0.7){$\otimes$};
  \node[text=gray] at (-2,3.5){$\Bbbk S_n$};
  \node[text=gray] at (-2,6.3){$\otimes$};
  \node[text=gray] at (-2,7){$A$};
  \node[text=gray] at (20,0){$A$};
  \node[text=gray] at (20,1){$\otimes$};
  \node[text=gray] at (20,3.5){$\Bbbk S_{n-1}$};
  \node[text=gray] at (20,6){$\otimes$};
  \node[text=gray] at (20,7){$A$};
  \end{tikzpicture}
\end{equation}

The first two terms in both (\ref{eq:partialI}) and (\ref{eq:partialII}) are the na\"ive terms, where $u_iv_ir_i=w$. The others are nontrivial, where $u_iv_ir_i\prec w$. Writing (\ref{eq:partialI}) and (\ref{eq:partialII}) in formulae, we get precisely (\ref{eq:71}) -- (\ref{eq:74}).

\begin{proof}
According to Theorem~\ref{thm:4.1}, since our resolution satisfies condition~(\ref{eq:prec}), the second assumption is satisfied. The only remaining point to check is that $\partial^2=0$, which can be verified by a straightforward computation.  
\end{proof}

\section{Hochschild cohomology of the braiding functors}

We now proceed to derive the Hochschild cohomology of braiding functors. For a general braiding functor $\beta$, its Hochschild cohomology is isomorphic to the complex $(\Hom^{\bullet -d}(P_d,B_\beta),\d_{\bullet})$, where $B_\beta$ is the corresponding bimodule and $\d_\bullet$ is the induced differential.

\subsection{$H^\bullet\Hom(\mathrm{id},\mathrm{id})$}

For the identity functor, on $\mathcal{C}$ we can only have degree $0$ morphisms, thus the cochain complex is given by
\begin{equation}
  \cdots\to\Hom (P_{n-1},\Delta)\xrightarrow{\d_{n-1}}\Hom (P_n,\Delta)\xrightarrow{\d_n}\Hom (P_{n+1},\Delta)\to\cdots
\end{equation}
where $\d_n\phi=\phi\comp\partial_{n+1}$.

\subsubsection{The cochain complex}

Since $P_n=A\otimes \Bbbk S_n\otimes A$ is a free bimodule, to define a homomorphism $\phi \colon P_n\to \Delta$, it suffices to specify the images of the generators $S_n$. For a generator $g\in S$ whose target $T_i$ and source $T_j=T_i,T_{i\pm 1}$, the element $\phi(g)$ may be an arbitrary morphism in $\Hom (T_j, T_i)$.
Using the basis $\left\{a_{ij}s^{\ell}\right\}_{\ell\ge 0}$ of this morphism space, we may write
\begin{equation}
  \phi(g)=\sum_{\ell=0}^\infty \phi_{\ell}(g)a_{ij}s^{\ell}
\end{equation}
where each $\phi_{\ell}(g)\in \Bbbk$. Thus the data of a bimodule homomorphism \[\phi\colon P_n\to \Delta\] is equivalent to a family of maps
\begin{equation}
   \phi_\ell\colon S_n\to \Bbbk, \quad \ell\ge 0.
\end{equation}

\begin{proposition}
  \begin{align}
    \Hom (P_n,\Delta)&\cong \Hom_{\mathsf{Set}}(\Z_{\ge 0}, \Hom_{\mathsf{Set}}( S_n,\Bbbk))
    \\
    &\cong \Hom_{\mathsf{Set}}(\Z_{\ge 0}\times S_n,\Bbbk) = \Bbbk^{\Z_{\ge 0}\times S_n}
  \end{align}
\end{proposition}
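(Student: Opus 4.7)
The plan is to exploit the fact that $P_n = A \otimes \Bbbk S_n \otimes A$ is a free $(A,A)$-bimodule on the generating set $S_n$; the source and target vertices of each $g\in S_n$ are built into the tensor product over $\Bbbk Q_0$, so a bimodule homomorphism $\phi\colon P_n\to\Delta$ is determined by, and freely so, its values on the generators $1\otimes g\otimes 1$. This is a universal-property argument and the main work is to unpack what ``value on a generator'' means concretely.

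First I would observe that each ambiguity $g\in S_n$ is a path with well-defined source $T_{s(g)}$ and target $T_{t(g)}$, so that $e_{t(g)}\cdot(1\otimes g\otimes 1)\cdot e_{s(g)} = 1\otimes g\otimes 1$. Hence $\phi(1\otimes g\otimes 1)$ must lie in the subspace $e_{t(g)}\,\Delta\, e_{s(g)} \cong \Hom_{\mathcal{C}}(T_{s(g)}, T_{t(g)})$. By the theorem of Section~\ref{sect:ajisk} this morphism space has the explicit basis $\{a_{t(g),\,s(g)}\,s^\ell \mid \ell\in\Z_{\ge 0}\}$, so writing
\begin{equation*}
  \phi(1\otimes g\otimes 1) \;=\; \sum_{\ell\ge 0} \phi_\ell(g)\, a_{t(g),\,s(g)}\, s^\ell, \qquad \phi_\ell(g)\in\Bbbk,
\end{equation*}
specifies $\phi$ completely and freely. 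Assembling over all $g\in S_n$ gives
\begin{equation*}
  \Hom(P_n,\Delta) \;\cong\; \prod_{g\in S_n} \Hom_{\mathcal{C}}(T_{s(g)}, T_{t(g)}) \;\cong\; \prod_{g\in S_n} \Bbbk^{\Z_{\ge 0}} \;\cong\; \Bbbk^{\Z_{\ge 0}\times S_n},
\end{equation*}
and the equivalence $\Hom_{\mathsf{Set}}(\Z_{\ge 0},\Hom_{\mathsf{Set}}(S_n,\Bbbk)) \cong \Hom_{\mathsf{Set}}(\Z_{\ge 0}\times S_n,\Bbbk)$ is just the currying adjunction for set-valued functions of two variables.

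The proof is essentially formal, so I do not expect a serious obstacle: it is a tautological consequence of the freeness of $P_n$ and of the already-established basis of $\Hom(T_i,T_j)$. The only notational point requiring care is the reading of $\Bbbk^{\Z_{\ge 0}}$ --- whether as the algebraic direct sum $\bigoplus_\ell \Bbbk\, a_{ij}s^\ell$ or as the full product allowing infinitely many nonzero coefficients --- which I would resolve by consistently following the convention adopted earlier in the paper, so that $\phi(g)$ is treated as a (possibly formal) series in $s$ in line with the later cochain enumeration.
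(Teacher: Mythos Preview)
Your proposal is correct and follows essentially the same argument as the paper: the text immediately preceding the proposition already spells out that $P_n$ is free on $S_n$, that $\phi(g)\in\Hom(T_{s(g)},T_{t(g)})$, and that expanding in the basis $\{a_{ij}s^\ell\}$ yields the coefficient maps $\phi_\ell\colon S_n\to\Bbbk$, which is precisely your reasoning. The proposition is then stated as a summary without further proof, so your write-up matches the paper's approach exactly.
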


\begin{theorem}\label{thm:dn}
\mathleft
Let $\phi\in\Hom(P_n,\Delta)$ have coefficient maps 
$\{\phi_\ell\}_{\ell\ge 0}$.  
The coefficient maps of $d_n\phi$ are given as follows:

\begin{itemize}
	\item $n=0$,
\begin{gather}
( \mathrm{d}_0\phi ) _{\ell}( p_{i+1} ) =\phi _{\ell}( e_{i+1} ) -\phi _{\ell}( e_i ) 
\\
( \mathrm{d}_0\phi ) _{\ell}( q_i ) =\phi _{\ell}( e_i ) -\phi _{\ell}( e_{i+1} ) 
\\
( \mathrm{d}_0\phi ) _{\ell}( s_i ) =0
\end{gather}
\item $n=1$,
\begin{gather}
( \mathrm{d}_1\phi ) _{\ell}( p_{i+1}q_i ) =\phi _{\ell-1}( p_{i+1} ) +\phi _{\ell-1}( q_i ) -\phi _{\ell}( s_{i+1} ) 
\\
( \mathrm{d}_1\phi ) _{\ell}( q_ip_{i+1} ) =\phi _{\ell-1}( q_i ) +\phi _{\ell-1}( p_{i+1} ) -\phi _{\ell}( s_i ) 
\\
( \mathrm{d}_1\phi ) _{\ell}( s_{i+1}p_{i+1} ) =\phi _{\ell}( s_{i+1} ) -\phi _{\ell}( s_i ) 
\\
( \mathrm{d}_1\phi ) _{\ell}( s_iq_i ) =\phi _{\ell}( s_i ) -\phi _{\ell}( s_{i+1} ) 
\end{gather}
\item $n\ge 2$ even,
\begin{gather}
( \mathrm{d}_n\phi ) _{\ell}( P^{n+1}_{i+1} ) 
=\phi _{\ell}( P_{i+1}^n ) -\phi _{\ell}( Q_i^n ) +\phi _{\ell}( sP_{i+1}^n ) 
\\
( \mathrm{d}_n\phi ) _{\ell}( Q_i^{n+1} ) 
=\phi _{\ell}( Q_i^n ) -\phi _{\ell}( P_{i+1}^n ) +\phi _{\ell}( sQ_i^n )  
\\
( \mathrm{d}_n\phi ) _{\ell}( sP_{i+1}^{n+1} ) 
=\phi _{\ell-1}( sP_{i+1}^n ) +\phi _{\ell-1}( sQ_i^n )
\\
( \mathrm{d}_n\phi ) _{\ell}( sQ_i^{n+1} ) 
=\phi _{\ell-1}( sQ_i^n ) +\phi _{\ell-1}( sP_{i+1}^n ) 
\end{gather}
\item $n\ge 2$ odd,
\begin{gather}
( \mathrm{d}_n\phi ) _{\ell}( P^{n+1}_{i+1} ) 
=\phi _{\ell-1}( P_{i+1}^n ) +\phi _{\ell-1}( Q_i^n ) -\phi _{\ell}( sP_{i+1}^n ) 
\\
( \mathrm{d}_n\phi ) _{\ell}( Q_i^{n+1} ) 
=\phi _{\ell-1}( Q_i^n ) +\phi _{\ell-1}( P_{i+1}^n ) -\phi _{\ell}( sQ_i^n ) 
\\
( \mathrm{d}_n\phi ) _{\ell}( sP_{i+1}^{n+1} )
 =\phi _{\ell}( sP_{i+1}^n ) -\phi _{\ell}( sQ_i^n )
\\
( \mathrm{d}_n\phi ) _{\ell}( sQ_i^{n+1} ) 
=\phi _{\ell}( sQ_i^n ) -\phi _{\ell}( sP_{i+1}^n )  
\end{gather}
where we take all $\phi_{-1}\coloneqq 0$.
\end{itemize}
\end{theorem}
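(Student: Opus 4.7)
By definition, $\mathrm{d}_n\phi=\phi\comp\partial_{n+1}$, so every formula follows by substituting the expression for $\partial_{n+1}(1\otimes w\otimes 1)$ from the previous section (equations \eqref{eq:71}--\eqref{eq:74} together with their $P\leftrightarrow Q$, $i\leftrightarrow i+1$ swap) and then expanding in the basis $\{a_{ba}s^\ell\}_{\ell\ge 0}$ of the relevant morphism spaces of $\Delta$. The plan is to do this case by case: for each parity of $n$ and each of the (up to four) types of generators in $S_{n+1}$, apply $\partial_{n+1}$, apply $\phi$ to each resulting term $u\otimes w'\otimes v\in P_n$ to obtain $u\cdot\phi(w')\cdot v\in\Delta$, and read off the coefficient of each basis vector.

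The only computational input is the product rule \eqref{eq:mu2} with $\delta(i,j,k)$ given by \eqref{eq:deltaijk}. Concretely, left or right multiplication by an $s_j$ always increments the dot count by one, while multiplication by a $p$ or $q$ leaves the dot count unchanged when the composition stays on one side of a puncture ($\delta=0$), and increments it by one when the composition wraps around a puncture ($\delta=1$), as in the key identities $q_is^\ell\cdot p_{i+1}=s_i^{\ell+1}$ and $p_{i+1}\cdot q_is^\ell=s_{i+1}^{\ell+1}$. The index shifts $\ell\to\ell-1$ on the right-hand sides of the $n=1$ case, the $n\ge 2$ even $sP,sQ$ cases, and the $n\ge 2$ odd $P,Q$ cases arise precisely from these dot-producing products.

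Before computing, one also needs to know the source and target of each ambiguity in $S_n$, since these determine which basis $\{a_{ba}s^\ell\}$ is used to expand $\phi(w')$. For example $Q^n_i$ always ends at $e_i$ but starts at $e_i$ or $e_{i+1}$ according as $n$ is even or odd, so $\phi(Q^n_i)$ lies in $\Hom(T_i,T_i)$ or $\Hom(T_{i+1},T_i)$ respectively; similar statements hold for the other three families. With this bookkeeping, each of the three or four terms of $\partial_{n+1}$ (cf.\ the diagrammatic descriptions \eqref{eq:partialI}--\eqref{eq:partialII}), carrying the sign $\pm(-1)^n$, contributes a single basis element with a $\phi_\ell$ or $\phi_{\ell-1}$ coefficient of a specific generator of $S_n$, and summing these gives the stated formulas. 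The $n=0$ case is the cleanest warm-up, since $\partial_1(1\otimes w\otimes 1)=1\otimes w-w\otimes 1$ has no non-na\"ive terms and no $\delta=1$ can occur.

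The main obstacle is purely bookkeeping. The comparatively short right-hand sides in the $sP$ and $sQ$ formulas arise from genuine cancellations: for instance in $(\mathrm{d}_n\phi)_\ell(sP^{n+1}_{i+1})$ with $n$ even, the two terms $-s_{i+1}\otimes P^n_{i+1}\otimes 1$ and $+1\otimes P^n_{i+1}\otimes s_{i+1}$ of $\partial_{n+1}$ each contribute $\mp\phi_{\ell-1}(P^n_{i+1})\,s_{i+1}^\ell$ after using $s_{i+1}\cdot s_{i+1}^{\ell-1}=s_{i+1}^\ell=s_{i+1}^{\ell-1}\cdot s_{i+1}$, and these two contributions cancel exactly, leaving only the $\phi_{\ell-1}(sP^n_{i+1})+\phi_{\ell-1}(sQ^n_i)$ terms displayed in the theorem. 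Verifying this and the analogous cancellations, together with correctly assembling the sign $(-1)^n$ in each parity, is the delicate part; once carried out the four formulas in each of the four cases of the theorem follow directly.
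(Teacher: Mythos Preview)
Your proposal is correct and is exactly what the paper does: its entire proof is the phrase ``Direct computation,'' and you have supplied that computation, including the correct identification of the cancellation in the $sP$ and $sQ$ cases coming from the pair of terms $\mp s_{i+1}\otimes P^n_{i+1}\otimes 1$ and $\pm 1\otimes P^n_{i+1}\otimes s_{\bullet}$ in $\partial_{n+1}$.
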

\begin{proof}
  Direct computation.
\end{proof}

\subsubsection{The cohomology}
\label{sect:HHid}
\begin{theorem}
  \begin{align}
    H^n\Hom (\mathrm{id},\mathrm{id})=\mathrm{HH}^n(\Delta)=\begin{cases}
      \Bbbk^{\mathbb{Z}_{\ge 0}}, & 0\le x\le 1,\\
      \Bbbk^{|\mathbf{x}|-1}, & n=2,\\
      0, & n\ge 3.
    \end{cases}
  \end{align}
\end{theorem}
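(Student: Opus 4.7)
The plan is to unpack the cochain complex $(\Hom(P_\bullet, \Delta), \mathrm{d}_\bullet)$ coming from the Chouhy--Solotar resolution. Identifying $\phi \in \Hom(P_n, \Delta)$ with its coefficient maps $\{\phi_\ell \colon S_n \to \Bbbk\}_{\ell \ge 0}$, Theorem~\ref{thm:dn} turns the problem into pure linear algebra, and I would treat the ranges $n=0,1$, $n=2$, and $n\ge 3$ separately.

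In degree zero, the cocycle condition forces $\phi_\ell(e_i)$ to be independent of $i$, leaving one parameter per $\ell$ and giving $H^0 = \Bbbk^{\Z_{\ge 0}}$. In degree one, the last two cocycle equations force $\phi_\ell(s_i)$ to be constant in $i$, call it $\phi_\ell(s)$ with $\phi_0(s) = 0$, while the first two impose $\phi_{\ell-1}(p_{i+1}) + \phi_{\ell-1}(q_i) = \phi_\ell(s)$. Using $\operatorname{im}\mathrm{d}_0$ one kills all $\phi_\ell(p_{i+1})$ by absorbing discrete gradients $\psi_\ell(e_{i+1}) - \psi_\ell(e_i)$, so that $\phi_\ell(q_i)$ becomes constant in $i$ and only the sequence $\ell \mapsto \phi_\ell(s)$ for $\ell \ge 1$ survives, giving $H^1 \cong \Bbbk^{\Z_{\ge 0}}$.

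For $n=2$, the cocycle condition yields $\phi_\ell(sP^2_{i+1}) = -\phi_\ell(sQ^2_i)$ and $\phi_\ell(P^2_{i+1}) - \phi_\ell(Q^2_i) = \phi_\ell(sQ^2_i)$. The image of $\mathrm{d}_1$ allows one first to kill $\phi_\ell(sQ^2_i)$ by a telescoping choice of $\psi_\ell(s_i)$, and then, for $\ell \ge 1$, to kill $\phi_\ell(P^2_{i+1})$ via $\psi_{\ell-1}(p_{i+1})$. At $\ell = 0$ the only remaining gauge is the single free constant $\psi_0(s_0)$, which shifts $\phi_0(P^2_{i+1})$ by the same value for every $i$, so exactly the $|\mathbf{x}|-1$ independent differences $\phi_0(P^2_{i+1}) - \phi_0(P^2_1)$ survive, yielding $H^2 = \Bbbk^{|\mathbf{x}|-1}$.

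For $n \ge 3$, the key structural observation is that every ambiguity in $S_{\ge 2}$ oscillates around a single red strand and the differentials in Theorem~\ref{thm:dn} preserve that index, so $(\Hom(P_{\ge 2}, \Delta), \mathrm{d}_{\ge 2})$ decomposes as $\bigoplus_{i} C^\bullet_i$ and it suffices to show each $C^\bullet_i$ is acyclic in degrees $\ge 3$. Passing to the coordinates
\[
  A^n = \phi(P^n)+\phi(Q^n),\quad
  B^n = \phi(P^n)-\phi(Q^n),\quad
  C^n = \phi(sP^n)+\phi(sQ^n),\quad
  D^n = \phi(sP^n)-\phi(sQ^n),
\]
the differential takes the triangular form $(A,B,C,D) \mapsto (C,\,2B+D,\,2SC,\,0)$ for even $n$ and $(A,B,C,D) \mapsto (2SA-C,\,-D,\,0,\,2D)$ for odd $n$, where $(Sf)_\ell := f_{\ell-1}$ with $f_{-1} := 0$. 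A direct component-by-component match then shows $\ker\mathrm{d}_n = \operatorname{im}\mathrm{d}_{n-1}$ for every $n \ge 3$. The main obstacle is the $\ell=0$ bookkeeping in $H^2$, where the telescoping of $\psi_0(s_i)$ around the single unfixed constant $\psi_0(s_0)$ has to be tracked carefully to isolate exactly the $|\mathbf{x}|-1$ surviving classes; once that is cleared, the triangular form renders acyclicity for $n \ge 3$ straightforward.
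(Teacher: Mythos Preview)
Your proposal is correct and follows essentially the same route as the paper: both use Theorem~\ref{thm:dn} to reduce to linear algebra, exploit the block decomposition over the index $i$ for $n\ge 2$, and then handle the $\ell=0$ boundary effect in degree~2 separately. The only presentational difference is that for $n\ge 3$ you pass to the sum/difference coordinates $(A,B,C,D)$, which puts the differential in triangular form and makes the verification of $\ker=\operatorname{im}$ immediate, whereas the paper computes eigenvectors of the $4\times 4$ matrices directly; similarly, for $n=2$ you gauge-fix sequentially while the paper does a direct dimension count of $\ker\mathrm{d}_2$ versus $\operatorname{im}\,\mathrm{d}_1$. Both arrive at the same answers with the same ingredients.
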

\begin{proof}
For $n\ge 3$, the resolution of the quiver algebra 
$A=\Bbbk Q/I$ decomposes into blocks of the shape
\begin{equation}
  Q^{(i)}=\begin{tikzcd}[column sep=large]
    \underset{e_i}{\bullet}\ar[r,bend left=10,"p_{i+1}"]
    \ar[looseness=8,out=120,in=60,"s_i"near end]
    & \underset{e_{i+1}}{\bullet}\ar[l,bend left=10,"q_i"]
    \ar[looseness=8,out=120,in=60,"s_{i+1}"near end]
  \end{tikzcd}
\end{equation}
with only four ambiguities in each block $Q^{(i)}$. Written explicitly, for $n\ge 2$ we have $|Q^{(i)}_n\cap S_n|=4$ and
\begin{align}
  &Q^{(i)}_n\cap S_n=\{P^n_{i+1},\,Q^n_i,\,sP^n_{i+1},\,sQ^n_i\},\\
  &(Q^{(i)}_n\cap S_n)\cap(Q^{(j)}_n\cap S_n)=\varnothing,\quad i\ne j.\label{eq:blocks}
\end{align}
Hence the matrix for $\d_n$ is block diagonal, we may treat all $Q^{(i)}$ separately, 
representing the Hochschild differential 
as a family of $4\times4$ matrices 
parametrized by $0\le i<|\mathbf{x}|$.

\smallskip
\noindent
\textbf{Case $n\ge 3$ even.}

\begin{itemize}
\item If $\phi\in \ker \d_{n}$, i.e. $\mathrm{d}_n\phi=0$, then by Theorem~\ref{thm:dn} for all $\ell\ge 0$ and $0\le i<|\mathbf{x}|$ we have
\begin{equation}
  0=\left[ \begin{array}{l}
(\mathrm{d}_n\phi )_{\ell}(P_{i+1}^{n+1})\\
(\mathrm{d}_n\phi )_{\ell}(Q_{i}^{n+1})\\
(\mathrm{d}_n\phi )_{\ell +1}(sP_{i+1}^{n+1})\\
(\mathrm{d}_n\phi )_{\ell +1}(sQ_{i}^{n+1})\\
\end{array} \right] =\left[ \begin{matrix}
1&		-1&		1&		0\\
-1&		1&		0&		1\\
0&		0&		1&		1\\
0&		0&		1&		1\\
\end{matrix} \right] \left[ \begin{array}{l}
\phi _{\ell}(P_{i+1}^{n})\\
\phi _{\ell}(Q_{i}^{n})\\
\phi _{\ell}(sP_{i+1}^{n})\\
\phi _{\ell}(sQ_{i}^{n})\\
\end{array} \right] .\label{eq:kereven}
\end{equation}
This matrix has rank~$2$, eigenvalues $\{2,2,0,0\}$, and its null space is spanned by the zero-eigenvalue eigenvectors $V^{n,\ell}_{i,P}\coloneqq [1,0,-1,1]^{\mathsf{T}}$ and $V^{n,\ell}_{i,Q}\coloneqq [0,1,1,-1]^{\mathsf{T}}$.
\item If $\phi\in \mathrm{im}\,\d_{n-1}$, i.e. $\phi=\d_{n-1}\psi$, then for all $\ell\ge 0$ and $0\le i<|\mathbf{x}|$ we have
\begin{equation}
  \left[ \begin{array}{l}
	\phi _{\ell}(P_{i+1}^{n})\\
	\phi _{\ell}(Q_{i}^{n})\\
	\phi _{\ell}(sP_{i+1}^{n})\\
	\phi _{\ell}(sQ_{i}^{n})\\
\end{array} \right] =\left[ \begin{array}{l}
	(\mathrm{d}_{n-1}\psi )_{\ell}(P_{i+1}^{n-1})\\
	(\mathrm{d}_{n-1}\psi )_{\ell}(Q_{i}^{n-1})\\
	(\mathrm{d}_{n-1}\psi )_{\ell}(sP_{i+1}^{n-1})\\
	(\mathrm{d}_{n-1}\psi )_{\ell}(sQ_{i}^{n-1})\\
\end{array} \right] =\left[ \begin{matrix}
	1&		1&		-1&		0\\
	1&		1&		0&		-1\\
	0&		0&		1&		-1\\
	0&		0&		-1&		1\\
\end{matrix} \right] \left[ \begin{array}{l}
	\psi _{\ell -1}(P_{i+1}^{n-1})\\
	\psi _{\ell -1}(Q_{i}^{n-1})\\
	\psi _{\ell}(sP_{i+1}^{n-1})\\
	\psi _{\ell}(sQ_{i}^{n-1})\\
\end{array} \right]. \label{eq:imeven}
\end{equation} 
\begin{itemize}
  \item For $\ell \ge 1$, all the components of the $\psi$ vector on the RHS of \eqref{eq:imeven} can be chosen freely. The matrix in \eqref{eq:imeven} has rank $2$, eigenvalues $\{2,2,0,0\}$, with the image spanned by the nonzero-eigenvalue eigenvectors, which are also $V^{n,\ell}_{i,P}\coloneqq [1,0,-1,1]^{\mathsf{T}}$ and $V^{n,\ell}_{i,Q}\coloneqq [0,1,1,-1]^{\mathsf{T}}$;
  \item For $\ell =0$, the components $\psi _{ -1}(P_{i+1}^{n-1})$ and	$\psi _{ -1}(Q_{i}^{n-1})$ have to be fix to zero, since there are no diagrams with $-1$ dot. Thus \eqref{eq:imeven} is reduced to
  \begin{equation}
  \left[ \begin{array}{l}
	\phi _{0}(P_{i+1}^{n})\\
	\phi _{0}(Q_{i}^{n})\\
	\phi _{0}(sP_{i+1}^{n})\\
	\phi _{0}(sQ_{i}^{n})\\
\end{array} \right] =\left[ \begin{array}{l}
	(\mathrm{d}_{n-1}\psi )_{0}(P_{i+1}^{n-1})\\
	(\mathrm{d}_{n-1}\psi )_{0}(Q_{i}^{n-1})\\
	(\mathrm{d}_{n-1}\psi )_{0}(sP_{i+1}^{n-1})\\
	(\mathrm{d}_{n-1}\psi )_{0}(sQ_{i}^{n-1})\\
\end{array} \right] =\left[ \begin{matrix}
	-1&		0\\
	0&		-1\\
	1&		-1\\
	-1&		1\\
\end{matrix} \right] \left[ \begin{array}{l}
	\psi _{0}(sP_{i+1}^{n-1})\\
	\psi _{0}(sQ_{i}^{n-1})\\
\end{array} \right].
\end{equation} 
Still, this matrix has rank $2$, with the image spanned by the colomn vectors $[-1,0,1,-1]^{\mathsf{T}}$, $[0,-1,-1,1]^{\mathsf{T}}$, which is the same subspace as $\ell \ge 1$.
\end{itemize}
Thus for all $\ell$, we have $\ker \d_n=\mathrm{im}\, \d_{n-1}$.
\begin{equation}
  \mathrm{HH}^n(\Delta)=0,\quad n\ge 3\;\text{even}.
\end{equation}
\end{itemize}

\smallskip
\noindent
\textbf{Case $n\ge 3$ odd.}

\begin{itemize}
\item If $\phi\in \ker \d_{n}$, i.e. $\mathrm{d}_n\phi=0$, then for all $\ell\ge 0$ and $0\le i<|\mathbf{x}|$ we have
\begin{equation}
  0=\left[ \begin{array}{l}
(\mathrm{d}_n\phi )_{\ell}(P_{i+1}^{n+1})\\
(\mathrm{d}_n\phi )_{\ell}(Q_{i}^{n+1})\\
(\mathrm{d}_n\phi )_{\ell}(sP_{i+1}^{n+1})\\
(\mathrm{d}_n\phi )_{\ell}(sQ_{i}^{n+1})\\
\end{array} \right] =\left[ \begin{matrix}
1&		1&		-1&		0\\
1&		1&		0&		-1\\
0&		0&		1&		-1\\
0&		0&		-1&		1\\
\end{matrix} \right] \left[ \begin{array}{l}
\phi _{\ell-1}(P_{i+1}^{n})\\
\phi _{\ell-1}(Q_{i}^{n})\\
\phi _{\ell}(sP_{i+1}^{n})\\
\phi _{\ell}(sQ_{i}^{n})\\
\end{array} \right] .\label{eq:kerodd}
\end{equation}
\item If $\phi\in \mathrm{im}\,\d_{n-1}$, i.e. $\phi=\d_{n-1}\psi$, then for all $\ell\ge 0$ and $0\le i<|\mathbf{x}|$ we have
\begin{equation}
  \left[ \begin{array}{l}
	\phi _{\ell-1}(P_{i+1}^{n})\\
	\phi _{\ell-1}(Q_{i}^{n})\\
	\phi _{\ell}(sP_{i+1}^{n})\\
	\phi _{\ell}(sQ_{i}^{n})\\
\end{array} \right] =\left[ \begin{array}{l}
	(\mathrm{d}_{n-1}\psi )_{\ell-1}(P_{i+1}^{n-1})\\
	(\mathrm{d}_{n-1}\psi )_{\ell-1}(Q_{i}^{n-1})\\
	(\mathrm{d}_{n-1}\psi )_{\ell}(sP_{i+1}^{n-1})\\
	(\mathrm{d}_{n-1}\psi )_{\ell}(sQ_{i}^{n-1})\\
\end{array} \right] =\left[ \begin{matrix}
	1&		-1&		1&		0\\
	-1&		1&		0&		1\\
	0&		0&		1&		1\\
	0&		0&		1&		1\\
\end{matrix} \right] \left[ \begin{array}{l}
	\psi _{\ell -1}(P_{i+1}^{n-1})\\
	\psi _{\ell -1}(Q_{i}^{n-1})\\
	\psi _{\ell -1}(sP_{i+1}^{n-1})\\
	\psi _{\ell -1}(sQ_{i}^{n-1})\\
\end{array} \right]. \label{eq:imodd}
\end{equation} 
\begin{itemize}
  \item For $\ell \ge 1$, all the components of the vectors on the RHS of both \eqref{eq:kerodd} and \eqref{eq:imodd} can be chosen freely. The two matrices are the same as the even case with the order reversed. Thus, the kernel of the matrix in \eqref{eq:kerodd} and the image of the matrix in \eqref{eq:imodd} are both spanned by the other eigenvectors in the $n\ge 3$ even case, which are both $\mathrm{Span}\,\{V^{n,\ell-1}_{i,P}\coloneqq (1,0,1,1)^\mathsf{T},\,V^{n,\ell-1}_{i,Q}\coloneqq (0,1,1,1)^\mathsf{T}\}$.
  \item For $\ell =0$, \eqref{eq:kerodd} and \eqref{eq:imodd} are reduced to
  \begin{align}\label{eq:kerodd0}
  0=\left[ \begin{array}{l}
(\mathrm{d}_n\phi )_{0}(P_{i+1}^{n+1})\\
(\mathrm{d}_n\phi )_{0}(Q_{i}^{n+1})\\
(\mathrm{d}_n\phi )_{0}(sP_{i+1}^{n+1})\\
(\mathrm{d}_n\phi )_{0}(sQ_{i}^{n+1})\\
\end{array} \right] &=\left[ \begin{matrix}
-1&		0\\
0&		-1\\
1&		-1\\
-1&		1\\
\end{matrix} \right] \left[ \begin{array}{l}
\phi _{0}(sP_{i+1}^{n})\\
\phi _{0}(sQ_{i}^{n})\\
\end{array} \right] \\
\left[ \begin{array}{l}
	\phi _{0}(sP_{i+1}^{n})\\
	\phi _{0}(sQ_{i}^{n})\\
\end{array} \right] =\left[ \begin{array}{l}
	(\mathrm{d}_{n-1}\psi )_{0}(sP_{i+1}^{n-1})\\
	(\mathrm{d}_{n-1}\psi )_{0}(sQ_{i}^{n-1})\\
\end{array} \right] &=0
\end{align} 

The only solution to \eqref{eq:kerodd0} is  $\left[ 
\phi _{0}(sP_{i+1}^{n}),\;
\phi _{0}(sQ_{i}^{n}) \right]^\mathsf{T}=\left[ 0,\;0 \right]^\mathsf{T}$, thus the kernel and the image are both $0$.
\end{itemize}
Thus for all $\ell$, we have $\ker \d_n=\mathrm{im}\, \d_{n-1}$.
\begin{equation}
  \mathrm{HH}^n(\Delta)=0,\quad n\ge 3\;\text{odd}.
\end{equation}
\end{itemize}

\smallskip
\noindent
\textbf{Case $n=2$.}
This is where things become different, since
\begin{equation}
  (Q^{(i)}_1\cap S_1)\cap (Q^{(i-1)}_1\cap S_1)=\{s_{i}\}\ne \varnothing,
\end{equation}
\eqref{eq:blocks} is not true for $n=1$. Thus, $\mathrm{im}\,\d_1$ begins to relate different blocks $Q^{(i)}$ to each other

\begin{itemize}
\item If $\phi\in \ker \d_{2}$, i.e. $\mathrm{d}_2\phi=0$, the situation is the same as \eqref{eq:kereven}. Here, we change the order of components as

\begin{equation}\label{eq:kerd2}
0=\left[ \begin{array}{l}
	(\mathrm{d}_2\phi )_{\ell +1}(sQ_{i}^{3})\\
	(\mathrm{d}_2\phi )_{\ell}(Q_{i}^{3})\\
	(\mathrm{d}_2\phi )_{\ell}(P_{i+1}^{3})\\
	(\mathrm{d}_2\phi )_{\ell +1}(sP_{i+1}^{3})\\
\end{array} \right] =\left[ \begin{matrix}
	1&		0&		0&		1\\
	1&		1&		-1&		0\\
	0&		-1&		1&		1\\
	1&		0&		0&		1\\
\end{matrix} \right] \left[ \begin{array}{l}
	\phi _{\ell}(s_iq_i)\\
	\phi _{\ell}(q_ip_{i+1})\\
	\phi _{\ell}(p_{i+1}q_i)\\
	\phi _{\ell}(s_{i+1}p_{i+1})\\
\end{array} \right] .
\end{equation}

The kernel is the same, spanned by $V^{2,\ell}_{i,Q}\coloneqq [-1,1,0,1]^{\mathsf{T}}$ and $V^{2,\ell}_{i,P}\coloneqq [1,0,1,-1]^{\mathsf{T}}$, where $0\le i<|\mathbf{x}|$.
\item If $\phi\in \mathrm{im}\,\d_{n-1}$, i.e. $\phi=\d_{n-1}\psi$, then for all $\ell\ge 0$ and $0\le i<|\mathbf{x}|$ we have on block $Q^{(i)}$:

\begin{equation}
  \left[ \begin{array}{l}
	\phi _{\ell}(s_iq_i)\\
	\phi _{\ell}(q_ip_{i+1})\\
	\phi _{\ell}(p_{i+1}q_i)\\
	\phi _{\ell}(s_{i+1}p_{i+1})\\
\end{array} \right] =\left[ \begin{array}{l}
	(\mathrm{d}_1\psi )_{\ell}(s_iq_i)\\
	(\mathrm{d}_1\psi )_{\ell}(q_ip_{i+1})\\
	(\mathrm{d}_1\psi )_{\ell}(p_{i+1}q_i)\\
	(\mathrm{d}_1\psi )_{\ell}(s_{i+1}p_{i+1})\\
\end{array} \right] =\left[ \begin{matrix}
	1&		0&		0&		-1\\
	-1&		1&		1&		0\\
	0&		1&		1&		-1\\
	-1&		0&		0&		1\\
\end{matrix} \right] \left[ \begin{array}{l}
	\psi _{\ell}(s_i)\\
	\psi _{\ell -1}(q_i)\\
	\psi _{\ell -1}(p_{i+1})\\
	\psi _{\ell}(s_{i+1})\\
\end{array} \right] 
\end{equation}

However, now $s_i$ belongs to both blocks $Q^{(i)}$ and $Q^{(i-1)}$, the total matrix is no longer block diagonal:

\begin{align}
&\left[ \begin{array}{l}
	\vdots\\
	\phi _{\ell}(s_{i-1}q_{i-1})\\
	\phi _{\ell}(q_{i-1}p_i)\\
	\phi _{\ell}(p_iq_{i-1})\\
	\phi _{\ell}(s_ip_i)\\
	\phi _{\ell}(s_iq_i)\\
	\phi _{\ell}(q_ip_{i+1})\\
	\phi _{\ell}(p_{i+1}q_i)\\
	\phi _{\ell}(s_{i+1}p_{i+1})\\
	\vdots\\
\end{array} \right] =\left[ \begin{matrix}
	\ddots&		&		&		&		&		&		&		&		\\
	&		1&		0&		0&		-1&		&		&		&		\\
	&		-1&		1&		1&		0&		&		&		&		\\
	&		0&		1&		1&		-1&		&		&		&		\\
	&		-1&		0&		0&		1&		&		&		&		\\
	&		&		&		&		1&		0&		0&		-1&		\\
	&		&		&		&		-1&		1&		1&		0&		\\
	&		&		&		&		0&		1&		1&		-1&		\\
	&		&		&		&		-1&		0&		0&		1&		\\
	&		&		&		&		&		&		&		&		\ddots\\
\end{matrix} \right] \left[ \begin{array}{l}
	\vdots\\
	\psi _{\ell}(s_{i-1})\\
	\psi _{\ell -1}(q_{i-1})\\
	\psi _{\ell -1}(p_i)\\
	\psi _{\ell}(s_i)\\
	\psi _{\ell -1}(q_i)\\
	\psi _{\ell -1}(p_{i+1})\\
	\psi _{\ell}(s_{i+1})\\
	\vdots\\
\end{array} \right]
\label{eq:imd1}
\\
&=:\left[ 
	\cdots	J_{\ell}(s_{i-1})
	,\,J _{\ell -1}(q_{i-1})
	,\,J _{\ell -1}(p_i)
	,\,J _{\ell}(s_i)
	,\,J _{\ell -1}(q_i)
	,\,J _{\ell -1}(p_{i+1})
	,\,J _{\ell}(s_{i+1}) 	\cdots \right] \left[ \begin{array}{l}
	\vdots\\
	\psi _{\ell}(s_{i-1})\\
	\psi _{\ell -1}(q_{i-1})\\
	\psi _{\ell -1}(p_i)\\
	\psi _{\ell}(s_i)\\
	\psi _{\ell -1}(q_i)\\
	\psi _{\ell -1}(p_{i+1})\\
	\psi _{\ell}(s_{i+1})\\
	\vdots\\
\end{array} \right]
\end{align}
where for $w\in Q_1$ we denote by $J_\ell (w)$ the colomn vector in the large matrix corresponding to $\psi_\ell (w)$.
\begin{itemize}
  \item For $\ell \ge 1$, all the components of the $\psi$ vector on the RHS of \eqref{eq:imd1} can be chosen freely. The image of $\d_1$ is given by the linear span of all column vector of the matrix. We can see that
  \begin{align}
    V^{2,\ell}_{0,Q}&=-J_{\ell}(s_0)\\
    V^{2,\ell}_{0,P}&=-V^{2,\ell}_{0,Q}+J_{\ell-1}(q_0)=J_{\ell}(s_0)+J_{\ell-1}(q_0)\\
    V^{2,\ell}_{1,Q}&=-V^{2,\ell}_{0,P}-J_{\ell}(s_1)=-J_{\ell}(s_0)-J_{\ell-1}(q_0)-J_{\ell}(s_1)\\
    V^{2,\ell}_{1,P}&=-V^{2,\ell}_{1,Q}+J_{\ell-1}(q_1)=J_{\ell}(s_0)+J_{\ell-1}(q_0)+J_{\ell}(s_1)+J_{\ell-1}(q_1)\\
    \cdots&\cdots\notag\\
    V^{2,\ell}_{i,Q}&=-V^{2,\ell}_{i-1,P}-J_{\ell}(s_i)\label{eq:148}\\
    V^{2,\ell}_{i,P}&=-V^{2,\ell}_{i,Q}+J_{\ell-1}(q_i)\label{eq:144}\\
    \cdots&\cdots\notag
  \end{align}
  Thus, all the null eigenvectors of $\d_2$ can be represented by a linear combination of column vectors of $\d_1$. Thus the $\ell\ge 1$ part is exact.
  \item For $\ell =0$, the components $\psi _{ -1}(q_j)$ and $\psi _{ -1}(p_{j+1})$ must be set to zero. This means we can no longer use $J_{-1}(q_j)$ and $J_{-1}(p_{j+1})$ in the linear combination:
  \begin{align}
\left[ \begin{array}{l}
	\vdots\\
	\phi _{0}(s_{i-1}q_{i-1})\\
	\phi _{0}(q_{i-1}p_i)\\
	\phi _{0}(p_iq_{i-1})\\
	\phi _{0}(s_ip_i)\\
	\phi _{0}(s_iq_i)\\
	\phi _{0}(q_ip_{i+1})\\
	\phi _{0}(p_{i+1}q_i)\\
	\phi _{0}(s_{i+1}p_{i+1})\\
	\vdots\\
\end{array} \right] &=\left[ \begin{matrix}
	\ddots&		&		&		&		\\
	&		1&				-1&			&		\\
	&		-1&			0&		&		\\
	&		0&			-1&		&		\\
	&		-1&			1&		&		\\
	&			&		1&		-1&		\\
	&			&		-1&	0&		\\
	&			&		0&	-1&		\\
	&			&		-1&			1&		\\
	&		&		&					&		\ddots\\
\end{matrix} \right] \left[ \begin{array}{l}
	\vdots\\
	\psi _{0}(s_{i-1})\\
	\psi _{0}(s_i)\\
	\psi _{0}(s_{i+1})\\
	\vdots\\
\end{array} \right]\label{eq:145}
\\
&=:\left[ 
	\cdots	J_{0}(s_{i-1})
	,\,J _{0}(s_i)
	,\,J _{0}(s_{i+1}) 	\cdots \right] \left[ \begin{array}{l}
	\vdots\\
	\psi _{0}(s_{i-1})\\
	\psi _{0}(s_i)\\
	\psi _{0}(s_{i+1})\\
	\vdots\\
\end{array} \right]
\end{align}

In this case, the term $J_{\ell-1}(q_i)$ in \eqref{eq:144} is no longer available. Therefore, the $\ell=0$ part is no longer exact.

Each $J_0(s_i)$ is linearly independent, since the entry corresponding to 
$\phi_0(q_{i}p_{i+1})$ is nonzero only for $J_0(s_i)$. 
Consequently, the matrix in \eqref{eq:145} has rank $|\mathbf{x}|+1$, which is the dimension of the $\ell=0$ part of $\operatorname{im}\d_1$.

As shown earlier, the $\ell=0$ component of $\ker d_2$ 
is spanned by $2|\mathbf{x}|$ linearly independent vectors 
$\{V^{2,0}_{i,P}, V^{2,0}_{i,Q}\}_{0\le i<|\mathbf{x}|}$.
Hence,
\begin{equation}
\dim^{\ell=0}  \ker d_2= 2|\mathbf{x}|,
\qquad
\dim^{\ell=0} \operatorname{im} d_1 = |\mathbf{x}|+1,
\end{equation}

and
\begin{equation}
\dim \mathrm{HH}^2(\Delta)
= \dim^{\ell=0}\ker d_2 - \dim^{\ell=0}\operatorname{im} d_1
= |\mathbf{x}| - 1.
\end{equation}
\end{itemize}
\end{itemize}
Therefore,
\begin{equation}
  \mathrm{HH}^2(\Delta)=\Bbbk^{|\mathbf{x}|-1}.
\end{equation}

The vectors $J_\ell$ span the image and the vectors $V^{2,\ell}$ span the kernel.
Thus, we can take the representatives of the Hochschild cohomology as
\begin{equation}
  \mathfrak{V}_i\coloneqq V^{2,\ell}_{i-1,P}-V^{2,\ell}_{i,Q}=[0,0,\dots,1,0,1,-1,1,-1,0,-1,\dots,0,0]^\mathsf{T},\quad 0< i < |\mathbf{x}|.
\end{equation}

\smallskip
\noindent
\textbf{Case $n=1$.}

\begin{itemize}
  \item The condition for $\phi\in \ker \d_{1}$ follows from \eqref{eq:imd1}
  \begin{equation}
  0=\left[ \begin{array}{l}
	\vdots\\
	(\d_1\phi) _{\ell}(s_{i-1}q_{i-1})\\
	(\d_1\phi) _{\ell}(q_{i-1}p_i)\\
	(\d_1\phi) _{\ell}(p_iq_{i-1})\\
	(\d_1\phi) _{\ell}(s_ip_i)\\
	(\d_1\phi) _{\ell}(s_iq_i)\\
	(\d_1\phi) _{\ell}(q_ip_{i+1})\\
	(\d_1\phi) _{\ell}(p_{i+1}q_i)\\
	(\d_1\phi) _{\ell}(s_{i+1}p_{i+1})\\
	\vdots\\
\end{array} \right] =\left[ \begin{matrix}
	\ddots&		&		&		&		&		&		&		&		\\
	&	\color{JO}	1&	\color{JO}	0&	\color{JO}	0&		\color{JO}-1&		&		&		&		\\
	&		-1&		1&		1&		0&		&		&		&		\\
	&		0&		1&		1&		-1&		&		&		&		\\
	&	\color{JO}	-1&	\color{JO}	0&	\color{JO}	0&	\color{JO}	1&		&		&		&		\\
	&		&		&		&		\color{JO}1&	\color{JO}	0&	\color{JO}	0&	\color{JO}	-1&		\\
	&		&		&		&		-1&		1&		1&		0&		\\
	&		&		&		&		0&		1&		1&		-1&		\\
	&		&		&		&	\color{JO}	-1&	\color{JO}	0&	\color{JO}	0&	\color{JO}	1&		\\
	&		&		&		&		&		&		&		&		\ddots\\
\end{matrix} \right] \left[ \begin{array}{l}
	\vdots\\
	\phi _{\ell}(s_{i-1})\\
	\phi _{\ell -1}(q_{i-1})\\
	\phi _{\ell -1}(p_i)\\
	\phi _{\ell}(s_i)\\
	\phi _{\ell -1}(q_i)\\
	\phi _{\ell -1}(p_{i+1})\\
	\phi _{\ell}(s_{i+1})\\
	\vdots\\
\end{array} \right]\label{eq:kerd1}
\end{equation}
This can be summarized concisely as
\begin{equation}
\begin{cases}
\phi_0(s_{i})=0, &\\
\phi _{\ell}(s_{i})=:\sigma_{\ell-1}, & \ell> 0,\; \forall \, 0\le i\le |\mathbf{x}|,\\
\phi _{\ell}(q_{i}) + \phi _{\ell}(p_{i+1})=\sigma _{\ell},& \ell\ge 0,\;\forall\, 0\le i <|\mathbf{x}|.\\
\end{cases}
\end{equation}
where the middle line comes from the blue rows of \eqref{eq:kerd1}, while the first and third lines come from the black rows.

\item The condition for $\phi=\d_0\psi\in \operatorname{im} \d_{0}$ is, by Theorem~\ref{thm:dn}
\begin{align}
\phi_{\ell}( p_{i+1} ) &=\psi _{\ell}( e_{i+1} ) -\psi _{\ell}( e_i ) 
\\
\phi_{\ell}( q_i ) &=\psi _{\ell}( e_i ) -\psi _{\ell}( e_{i+1} ) 
\\
\phi_{\ell}( s_i ) &=0
\end{align}
which is equivalent to
\begin{equation}
\begin{cases}
\phi _{\ell}(s_{i})=0, & \forall \, \ell\ge 0,\; 0\le i\le |\mathbf{x}|,\\
\phi _{\ell}(q_{i}) + \phi _{\ell}(p_{i+1})=0,& \forall\, \ell\ge 0,\;0\le i <|\mathbf{x}|.\\
\end{cases}
\end{equation}
\end{itemize}

Therefore, $\{\sigma_\ell\}_{\ell\ge 0}$ are precisely the parameters of the Hochschild cohomology:
\begin{equation}
  \mathrm{HH}^1(\Delta)=\Bbbk^{\Z_{\ge 0}}.
\end{equation}

\smallskip
\noindent
\textbf{Case $n=0$.}

\begin{itemize}
  \item The condition for $\phi\in \ker \d_{0}$ is simply
  \begin{equation}
    \epsilon_\ell=\phi _{\ell}( e_i ),\quad \ell\ge 0,\;\forall\, 0\le i\le |\mathbf{x}|.
  \end{equation}
\end{itemize}
Therefore, $\{\epsilon_\ell\}_{\ell\ge 0}$ parametrize the Hochschild cohomology:
\begin{equation}
  \mathrm{HH}^0(\Delta)=\Bbbk^{\Z_{\ge 0}}.
\end{equation}
\end{proof}

\subsection{$H^\bullet\Hom(\mathrm{id},\bim)$}

\begin{theorem}
  \begin{align}
    H^n\Hom (\mathrm{id},\bim)=\mathrm{HH}^n(\mathfrak{B}_{i^-})=\begin{cases}
      \Bbbk^{\mathbb{Z}_{\ge 1}}, & n=0\\
      \Bbbk^{\mathbb{Z}_{\ge 0}}, & n=1\\
      \Bbbk^{|\mathbf{x}|-2}, & n=2\\
      0, & n\ge 3.
    \end{cases}
  \end{align}
\end{theorem}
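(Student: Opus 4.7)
My plan is to avoid redoing a block-matrix analysis parallel to Section~7.1 and instead exploit the short exact sequence of bimodules
\[
0 \longrightarrow \mathfrak{B}_{i^-} \xrightarrow{\iota_i} \Delta \xrightarrow{\pi_i} \mathfrak{S}_i \longrightarrow 0
\]
established in Section~3.2.4. Applying $\mathbf{RHom}_{\mathcal{C}\text{-mod-}\mathcal{C}}(\Delta,-)$ gives a long exact sequence
\[
\cdots \to \mathrm{HH}^{n-1}(\mathfrak{S}_i) \to \mathrm{HH}^n(\mathfrak{B}_{i^-}) \to \mathrm{HH}^n(\Delta) \xrightarrow{\pi_i^*} \mathrm{HH}^n(\mathfrak{S}_i) \to \mathrm{HH}^{n+1}(\mathfrak{B}_{i^-}) \to \cdots
\]
Since $\mathrm{HH}^\bullet(\Delta)$ is already determined, the entire computation reduces to finding $\mathrm{HH}^\bullet(\mathfrak{S}_i)$ and identifying the restriction map $\pi_i^*$ on explicit cocycle representatives.

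For the first input, I will use the same Chouhy--Solotar resolution $P_\bullet$. Because $\mathfrak{S}_i$ is one-dimensional, supported at the pair $(e_i,e_i)$, and annihilated on both sides by every non-idempotent arrow, a cochain $\phi\in\Hom(P_n,\mathfrak{S}_i)$ is determined by scalars $c(g)\in\Bbbk$ indexed only by those ambiguities $g\in S_n$ whose source and target both equal $e_i$. Such generators occur once in degrees $n=0$ ($e_i$) and $n=1$ ($s_i$), and in exactly two copies for each $n\ge 2$, one in each of the blocks $Q^{(i-1)}$ and $Q^{(i)}$ adjacent to $e_i$. Inspecting \eqref{eq:71}--\eqref{eq:74}, every summand of $\partial_n$ except the ``interior'' term $\pm 1\otimes sQ^{n-1}_i\otimes 1$ or $\pm 1\otimes sP^{n-1}_i\otimes 1$ is decorated on some side by a non-idempotent and hence vanishes after applying $\phi$. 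A short direct check then yields $\d_m=0$ for every even $m\ge 0$, while $\d_m=-\mathrm{id}$ for every odd $m\ge 3$, and $\d_1$ is the injection $c\mapsto(-c,-c)$; consequently
\[
\mathrm{HH}^n(\mathfrak{S}_i)=\begin{cases}\Bbbk,& n=0\text{ or }n=2,\\ 0,& n=1\text{ or }n\ge 3.\end{cases}
\]

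Threading the long exact sequence is then straightforward apart from identifying $\pi_i^*$ in degrees $0$ and $2$. At degree $0$, the class parameterized by $\epsilon_\ell$ is represented by the central element $\sum_j s^\ell_j$, and $\pi_i^*$ acts as $\epsilon_\ell\cdot\pi_i(s^\ell_i)$, which is nonzero only for $\ell=0$; thus $\pi_i^*$ is the projection $\Bbbk^{\Z_{\ge 0}}\twoheadrightarrow\Bbbk$ with kernel $\Bbbk^{\Z_{\ge 1}}$, yielding $\mathrm{HH}^0(\mathfrak{B}_{i^-})=\Bbbk^{\Z_{\ge 1}}$. Surjectivity here, combined with $\mathrm{HH}^1(\mathfrak{S}_i)=0$, forces $\mathrm{HH}^1(\mathfrak{B}_{i^-})\cong\mathrm{HH}^1(\Delta)=\Bbbk^{\Z_{\ge 0}}$. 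At degree $2$ I will evaluate $\pi_i^*$ on the representatives $\mathfrak{V}_j$ from Section~7.1.2: the only diagonal-at-$i$ generators of $S_2$ are $P^2_i=p_iq_{i-1}$ (in block $Q^{(i-1)}$) and $Q^2_i=q_ip_{i+1}$ (in block $Q^{(i)}$), and reading off the entries of $V^{2,0}_{j-1,P}$ and $V^{2,0}_{j,Q}$ one checks that the relevant coordinates of $\mathfrak{V}_j$ vanish for every $j\ne i$, while $\mathfrak{V}_i$ evaluates to the pair $(1,-1)$, which is nontrivial modulo $\mathrm{im}\,\d_1=\Bbbk(1,1)$. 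Hence $\pi_i^*$ has rank one, giving $\mathrm{HH}^2(\mathfrak{B}_{i^-})=\Bbbk^{|\mathbf{x}|-2}$ and $\mathrm{HH}^3(\mathfrak{B}_{i^-})=0$; vanishing in all higher degrees is automatic from $\mathrm{HH}^n(\Delta)=\mathrm{HH}^{n-1}(\mathfrak{S}_i)=0$ for $n\ge 4$.

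The main obstacle will be the degree-$2$ bookkeeping: confirming $\pi_i^*\mathfrak{V}_j=0$ for every $j\ne i$ requires carefully isolating the third entry of block $Q^{(i-1)}$ and the second entry of block $Q^{(i)}$ in both $V^{2,0}_{j-1,P}$ and $V^{2,0}_{j,Q}$, and tracking when the two adjacent blocks actually meet position $e_i$. A secondary point is that surjectivity of $\pi_i^*$ at $n=2$ is precisely what is needed to force $\mathrm{HH}^3(\mathfrak{B}_{i^-})=0$, so the nonvanishing of $\pi_i^*\mathfrak{V}_i$ must be confirmed, not merely guessed.
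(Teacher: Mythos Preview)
Your proposal is correct and follows essentially the same route as the paper's own proof: both arguments run the long exact sequence coming from $0\to\mathfrak{B}_{i^-}\to\Delta\to\mathfrak{S}_i\to 0$, compute $\mathrm{HH}^\bullet(\mathfrak{S}_i)=\{\Bbbk,0,\Bbbk,0,0,\dots\}$ from the Chouhy--Solotar resolution, and then verify that $\pi_i^*$ is surjective in degrees $0$ and $2$. You supply more detail than the paper does at the last step---the paper merely asserts surjectivity, while you explicitly check that $\pi_i^*\mathfrak{V}_j$ picks out the pair $(\vartheta_i,-\vartheta_i)$ on $(P_i^2,Q_i^2)$ and hence hits $\mathrm{HH}^2(\mathfrak{S}_i)$ nontrivially exactly when $j=i$.
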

\begin{proof}
From the short exact sequence \eqref{eq:coker}, we obtain the long exact sequence of Hochschild cohomology groups:
\begin{equation}\label{eq:LES}
	\begin{tikzcd}[column sep=small]
		\cdots\ar[r]
		&\mathrm{HH}^n(\mathfrak{B}_{i^-})\ar[r]
		&\mathrm{HH}^n(\Delta)\ar[r]
		&\mathrm{HH}^n(\mathfrak{S}_i)\ar[r]
		&\mathrm{HH}^{n+1}(\mathfrak{B}_{i^-})\ar[r]
		&\cdots
	\end{tikzcd}
\end{equation}

The Hochschild cohomology of $\mathfrak{S}_i$ can be computed directly. 
Indeed, since $\mathfrak{S}_i$ is generated by a single diagram $\textit{\k{e}}_i$, which goes from $T_i$ to $T_i$, to define a morphism 
$\phi \in \Hom(P_n, \mathfrak{S}_i)$, it suffices to assign a scalar in $\Bbbk$ to each element in $S_n$ 
whose source and target are both $e_i$. 
For $n=0,1$, there is only one such element, namely $e_i \in S_0$ and $s_i \in S_1$; 
for $n \ge 2$, there are two such elements, which are $P^n_i$ and $Q^n_i$ for even $n$, and $sP^n_i$ and $sQ^n_i$ for odd $n$. 
Therefore,

\begin{equation}
  \Hom (P_n,\mathfrak{S}_i)=\begin{cases}
    \Bbbk, & 0\le n\le 1,\\
    \Bbbk ^2, & n\ge 2.
  \end{cases}
\end{equation}

Using the resolution in Theorem~\ref{thm:dn}, the corresponding complex is easily seen to be
\begin{equation}
	\begin{tikzcd}
		\Bbbk\ar[r,"0"]
		&\Bbbk\ar[r,"\lbracket -1\comma -1\rbracket"]
		&\Bbbk^2\ar[r,"0"]
		&\Bbbk^2\ar[r,"-\mathrm{id}"]
		&\Bbbk^2\ar[r,"0"]
		&\Bbbk^2\ar[r,"-\mathrm{id}"]
		&\Bbbk^2\ar[r]
		&\cdots
	\end{tikzcd}
\end{equation}

Hence
\begin{equation}
	\mathrm{HH}^\bullet(\mathfrak{S}_i)=H^\bullet\mathrm{Hom}(\mathrm{id},\mathfrak{S}_i)=\left\{\Bbbk,0,\Bbbk,0,0,0,\dots \right\}
\end{equation}

Therefore \eqref{eq:LES} is for $n\ge 4$,
\begin{equation}
	\begin{tikzcd}
		0\ar[r]
		&\mathrm{HH}^n(\mathfrak{B}_{i^-})\ar[r,"\cong"]
		&\mathrm{HH}^n(\Delta)\ar[r]
		&0
	\end{tikzcd}
\end{equation}
\begin{equation}
	\mathrm{HH}^n(\mathfrak{B}_{i^-})\cong \mathrm{HH}^n(\Delta)
  = 0,\quad n\ge 4.
\end{equation}

For the lower part of \eqref{eq:LES}, we have

\begin{equation}
	\begin{tikzcd}
		0\ar[r]
		&\mathrm{HH}^0(\mathfrak{B}_{i^-})\ar[r]
		&\mathrm{HH}^0(\Delta)\ar[r]
		&\Bbbk\ar[r]
		&\mathrm{HH}^1(\mathfrak{B}_{i^-})\ar[r]
		&\mathrm{HH}^1(\Delta)\ar[r]&0
	\end{tikzcd}
\end{equation}

\begin{equation}
	\begin{tikzcd}
		0\ar[r]
		&\mathrm{HH}^2(\mathfrak{B}_{i^-})\ar[r]
		&\mathrm{HH}^2(\Delta)\ar[r]
		&\Bbbk\ar[r]
		&\mathrm{HH}^3(\mathfrak{B}_{i^-})\ar[r]
		&0
	\end{tikzcd}
\end{equation}

It is easy to show that the maps $\mathrm{HH}^0(\Delta)\to\mathrm{HH}^0(\mathfrak{S}_i)$ and $\mathrm{HH}^2(\Delta)\to\mathrm{HH}^2(\mathfrak{S}_i)$ are surjections. Therefore,

\begin{align}
  &\mathrm{HH}^0(\mathfrak{B}_{i^-})\cong \mathrm{HH}^0(\Delta)/\Bbbk\cong \Bbbk^{\Z_{\ge 1}}\\
  &\mathrm{HH}^1(\mathfrak{B}_{i^-})\cong \mathrm{HH}^1(\Delta)\cong \Bbbk^{\Z_{\ge 0}}\\
  &\mathrm{HH}^2(\mathfrak{B}_{i^-})\cong \mathrm{HH}^2(\Delta)/\Bbbk\cong \Bbbk^{|\mathbf{x}|-2}\\
  &\mathrm{HH}^3(\mathfrak{B}_{i^-})=0
\end{align}
\end{proof}
\begin{remark}
  One can also repeat the procedure in Section~\ref{sect:HHid} to find this. More concretely, $\mathrm{HH}^0(\mathfrak{B}_{i^-})$ demands $\epsilon_0=0$ and $\mathrm{HH}^2(\mathfrak{B}_{i^-})$ demands $\vartheta_i=0$.
\end{remark}

\section{Homotopy deformation retract from bar resolution}

To obtain explicit expressions for the natural transformations, we translate our resolution $P_\bullet$ back to the bar resolution.  
In our $A_\infty$-category $\mathcal{C}$, the only nontrivial structure map is $\mu^2$.  
Moreover, defining $\bar{A} \coloneqq  A / (\Bbbk Q_0 \cdot 1_A)$, the bar resolution from Definition~\ref{def:Bar} can, in the language of quiver algebras, be reduced to the \emph{normalized bar resolution}:

\begin{definition}[normalized bar resolution]
\begin{align}
  &\mathrm{Bar}_n=A\otimes \bar{A}^{\otimes n}\otimes A  \\
  &\bar{\partial}_n\colon \mathrm{Bar}_n\to\mathrm{Bar}_{n-1}\\
  &\begin{aligned}
    \bar{\partial}_n(a_{n+1}\otimes [a_n|\cdots| a_1]\otimes a_0)=&a_{n+1}\otimes[a_n|\cdots|a_2]\otimes a_1a_0\\
    &+\sum_{i=1}^{n-1}(-1)^i a_{n+1}\otimes[a_n|\cdots|a_{i+1}a_{i}|a_{i-1}|\cdots|a_1]\otimes a_0\\
    &+(-1)^n a_{n+1}a_n\otimes[a_{n-1}|\cdots|a_1]\otimes a_0
  \end{aligned}\label{eq:Barn}
\end{align}  
\end{definition}

The convention in the RHS of \eqref{eq:Barn} is consistent with \cite{CS, BW}, which differs by an overall $\pm$ sign with $\mu^1_{\Fun (\mathcal{A},\mathcal{B})}$ in \cite{Seidel-book}.

A homotopy deformation retract from the normalized bar resolution onto Chouhy--Solotar resolution is constructed in \cite{BW}.
\begin{theorem}
  There is a special homotopy deformation retract
  \begin{equation}
	\begin{tikzcd}
		\left(P_\bullet,\partial_\bullet\right)\ar[rr,yshift=3,"F_\bullet"]&&\left(\mathrm{Bar}_\bullet,\bar{\partial}_\bullet\right)\ar[ll,yshift=-3,"G_\bullet"]\ar[looseness=5, out=30, in=-30, "h_\bullet"]
	\end{tikzcd}
\end{equation}
such that
\begin{gather}
	G_nF_n=\mathrm{id},\qquad F_nG_n=\mathrm{id}+h_{n-1}\bar{\partial}_n+\bar{\partial}_{n+1}h_n
	\\
	h_nF_n=0,\qquad G_{n+1}h_n=0,\qquad h_{n+1}h_n=0
\end{gather}

\end{theorem}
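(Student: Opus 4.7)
The plan is to follow the general construction of Briggs--Witherspoon \cite{BW}, which produces such a retract for any path algebra equipped with a reduction-unique reduction system. Since the statement is an existence claim, it suffices to exhibit $F_\bullet$, $G_\bullet$, and $h_\bullet$ and check the six identities.

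First I would construct $G_\bullet$ using the reduction system. Take $G_0(a_1\otimes a_0)=a_1\otimes 1\otimes a_0\in P_0$; for $n\ge 1$ set
\[
G_n\bigl(a_{n+1}\otimes[a_n|\cdots|a_1]\otimes a_0\bigr)
=a_{n+1}\cdot\mathrm{split}_n(a_n\cdots a_1)\cdot a_0,
\]
concatenating the middle slots into a single path and recording every occurrence of an $S_n$-subpath. Next I would build $F_\bullet$ recursively by lifting generators of $P_\bullet$ to the bar complex. For $1\otimes w\otimes 1\in P_n$ with $w=w_{n+1}\cdots w_1\in S_n$ a product of arrows, set
\[
F_n(1\otimes w\otimes 1)=1\otimes[w_{n+1}|\cdots|w_1]\otimes 1+(\text{correction terms}),
\]
where the corrections come from the non-monomial part of $\partial_n-\delta_n$ and are produced by applying the standard bar contracting homotopy to the failure of the na\"ive lift to be a chain map, in exact parallel to how $\rho_{n-1}$ corrects $\delta_n$ into $\partial_n$ via \eqref{par} and \eqref{rho}. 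By construction $\bar{\partial}_n F_n=F_{n-1}\partial_n$, and $G_nF_n=\mathrm{id}$ holds because each correction arises from replacing a subpath $s\in S$ by its reduction $\varphi_s$, which strictly decreases under $\preceq$ and therefore no longer contains the maximal $S_n$-subpath detected by $\mathrm{split}_n$.

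Third, for the homotopy I would start with the standard bar contracting homotopy
\[
s_n(a_{n+1}\otimes[a_n|\cdots|a_1]\otimes a_0)=1\otimes[a_{n+1}|a_n|\cdots|a_1]\otimes a_0,
\]
which already satisfies $s_{n-1}\bar{\partial}_n+\bar{\partial}_{n+1}s_n=\mathrm{id}-\pi$ for a certain idempotent $\pi$. I would then apply the projector trick, setting
\[
h_n:=(\mathrm{id}-F_{n+1}G_{n+1})\,s_n\,(\mathrm{id}-F_nG_n).
\]
Sandwiching by these two idempotents forces $h_nF_n=0$ and $G_{n+1}h_n=0$ once $G_nF_n=\mathrm{id}$ is known, and the homotopy identity $F_nG_n=\mathrm{id}+h_{n-1}\bar{\partial}_n+\bar{\partial}_{n+1}h_n$ then follows from the analogous identity for $s_\bullet$. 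The remaining condition $h_{n+1}h_n=0$ is obtained by the standard iterative refinement of the perturbation lemma: replace $h_n$ by $h_n$ minus a correction built from $h_{n+1}\comp(\text{partial}+\text{partial})\comp h_n$ and iterate until the nilpotency is exact.

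The main obstacle is the simultaneous verification of all six identities once the perturbation has been carried out, in particular ensuring that the refinement step forcing $h^2=0$ does not spoil the chain-map property of $F$ and $G$ or the sandwich conditions. This is precisely the delicate sign-and-bookkeeping that the BW construction handles, and I would invoke their main theorem once the explicit $F_\bullet$, $G_\bullet$, and the initial $s_\bullet$ above have been written down. Our convention \eqref{eq:Barn} differs from Seidel's by an overall sign, so I would also need to audit signs once at the end; this is routine.
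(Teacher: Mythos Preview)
The paper does not prove this theorem; it cites it directly from \cite{BW}. Your plan to invoke the Briggs--Witherspoon construction is therefore exactly the paper's approach, and your outline of the projector trick for $h_\bullet$ is in the right spirit.

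However, your proposed closed formula for $G_n$ is wrong, and this matters because $G_\bullet$ is the only map actually used downstream. You write $G_n(a_{n+1}\otimes[a_n|\cdots|a_1]\otimes a_0)=a_{n+1}\cdot\mathrm{split}_n(a_n\cdots a_1)\cdot a_0$, i.e.\ you apply $\mathrm{split}_n$ only to the raw concatenation. The correct BW definition is recursive,
\[
G_n(a\otimes y\otimes b)=a\cdot\rho_{n-1}G_{n-1}\bar\partial_n(1\otimes y\otimes b),
\]
and for the quiver at hand this unwinds (Theorem~\ref{thm:Gn}) to a sum over \emph{all} reductions $w\preceq y_n\cdots y_1$ of $\mathrm{split}_n(w)$, not just the single term $w=y_n\cdots y_1$. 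The distinction is essential: type~I reductions $pq\mapsto s$ can create new $S_n$-subpaths that were not present in the original concatenation, and omitting these extra terms would break $G_{n-1}\partial_n=\bar\partial_nG_n$. So your sketch of the existence argument is fine as a citation of \cite{BW}, but if you intend to compute with $G_\bullet$ you must replace your formula by the recursive one.
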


The map that we need is $G_\bullet$, which is given by
\begin{proposition}
  \begin{align*}
  &G_{-1}=\mathrm{id}_A
  \\
  &G_n\left( a\otimes y\otimes b \right) =a\cdot\rho _{n-1}G_{n-1}\bar{\partial}_n\left( 1\otimes y\otimes b \right)
  \end{align*}
  where $y=[y_n|\cdots|y_1]$, and $\bar{\partial}_0\colon A\otimes A\to A$ is given by the multiplication in $A$.
\end{proposition}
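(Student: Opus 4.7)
The plan is to verify the recursive formula by induction on $n$, using the contracting-homotopy identity satisfied by $\rho_{n-1}$. The base case is direct: at $n=0$, the formula reads $G_0(a\otimes b)=a\cdot\rho_{-1}(G_{-1}\bar{\partial}_0(1\otimes b))=a\cdot\rho_{-1}(b)=a\otimes b$, which is the identity on $P_0=A\otimes_{\Bbbk Q_0}A$, matching the required $G_0$ at level zero. In particular $G_0$ is a chain map onto $P_0$ and lifts $\mathrm{id}_\Delta$.

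For the inductive step, the essential claim is that $G_n$ as defined is a chain map, i.e.\ $\partial_n\circ G_n=G_{n-1}\circ\bar{\partial}_n$. By left $A$-linearity of both sides it suffices to test on elements $1\otimes y\otimes b$. Setting $\xi:=G_{n-1}\bar{\partial}_n(1\otimes y\otimes b)\in P_{n-1}$, I would compute $\partial_n G_n(1\otimes y\otimes b)=\partial_n\rho_{n-1}(\xi)$, then invoke the Chouhy--Solotar contracting-homotopy identity $\partial_n\rho_{n-1}+\rho_{n-2}\partial_{n-1}=\mathrm{id}$ (valid on images of chain maps into $P_{n-1}$) to rewrite this as $\xi-\rho_{n-2}\partial_{n-1}\xi$. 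The inductive hypothesis $\partial_{n-1}G_{n-1}=G_{n-2}\bar{\partial}_{n-1}$ together with $\bar{\partial}^2=0$ collapses the second term to zero, leaving exactly $G_{n-1}\bar{\partial}_n(1\otimes y\otimes b)$.

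Once $G_\bullet$ is established as a chain map between projective resolutions of the diagonal bimodule $\Delta$, the companion maps $F_\bullet$ and $h_\bullet$ of the special deformation retract are reconstructed by the basic perturbation lemma from the auxiliary contractions $\gamma_\bullet$: $F_\bullet$ arises inductively using the $\gamma_n$'s, and $h_\bullet$ is the homotopy witnessing $F_n G_n=\mathrm{id}+h_{n-1}\bar{\partial}_n+\bar{\partial}_{n+1}h_n$. The side conditions $G_n F_n=\mathrm{id}$, $h_n F_n=0$, $G_{n+1}h_n=0$, and $h_{n+1}h_n=0$ follow from formal properties of $\gamma_\bullet$ already exploited in the Chouhy--Solotar construction, exactly as worked out in \cite{BW}; none of this interacts with the explicit formula for $G_n$ itself beyond the chain-map property already verified.

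The hard part will be the contracting-homotopy identity for $\rho_{n-1}$ that drives the induction. Given the infinite-sum definition $\rho_{n-1}=\gamma_{n-1}+\sum_{i\ge 1}\gamma_{n-1}(\delta_n\gamma_{n-1}-\partial_n\gamma_{n-1})^i$, convergence is automatic because each application of $\delta_n\gamma_{n-1}-\partial_n\gamma_{n-1}$ strictly decreases paths under the partial order $\preceq$, so only finitely many terms act nontrivially on any input of the form $u\otimes r\otimes v$. The telescoping cancellations that yield $\partial_n\rho_{n-1}=\mathrm{id}-\rho_{n-2}\partial_{n-1}$ on boundaries must be tracked carefully; however, this is precisely the content of the main results of \cite{CS} already quoted in Theorem~\ref{thm:4.1} and the surrounding discussion, so we may simply invoke it rather than re-derive it.
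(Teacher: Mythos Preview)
The paper does not prove this Proposition; it is stated as the recursive formula for $G_\bullet$ taken directly from \cite{BW}, immediately following the Theorem that quotes the existence of the special deformation retract from that reference. Your sketch therefore does more than the paper: you outline why the recursion actually produces a chain map, which is the substance of the argument in \cite{BW}. Your inductive verification is correct in outline---the base case matches the paper's own unwinding of $G_0=\mathrm{id}_{A\otimes A}$ (which appears not here but in the proof of Theorem~\ref{thm:Gn}), and the use of the contracting-homotopy identity $\partial_n\rho_{n-1}+\rho_{n-2}\partial_{n-1}=\mathrm{id}$ on cycles is exactly the mechanism from \cite{CS} that makes the recursion well-defined.

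The paper's only direct engagement with this formula comes in the proof of Theorem~\ref{thm:Gn}, where the recursion is iterated explicitly against the specific reduction system to obtain a closed diagrammatic description of $G_n$; there the Proposition is treated as a black-box input rather than something to be re-verified. So your proposal and the paper are not really competing approaches: the paper defers the proof to \cite{BW}, and you have sketched what that deferred proof looks like.
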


Although we have proved that our cohomologically nontrivial natural transformations all concentrate in degree $\le 2$, for completeness and potential application in the future, we here present the result for arbitrary $n$.
\begin{theorem}\label{thm:Gn}
The result of $G_\bullet$ in our quiver is
  \begin{equation}
    G_n(a\otimes [y_n|\cdots|y_1]\otimes b)=a\cdot\sum_{w\preceq y_n\dots y_1}\mathrm{split}_n(w) \cdot b,
  \end{equation}
  where in the sum, different $w$ with the same result of $\mathrm{split}_n(w)$ are counted only once.
\end{theorem}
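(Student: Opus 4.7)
The plan is to proceed by induction on $n$, using the recursive definition $G_n(a\otimes y\otimes b)=a\cdot\rho_{n-1}G_{n-1}\bar\partial_n(1\otimes y\otimes b)$ together with the series formula for $\rho_{n-1}$ from Section~\ref{sect:CSresolution}. For the base case $n=1$, any $y_1\in\bar A$ may be represented by an irreducible path, so $\{w\preceq y_1\}=\{y_1\}$ and the claim reduces to $G_1(a\otimes[y_1]\otimes b)=a\cdot\mathrm{split}_1(y_1)\cdot b$; this follows directly from the recursion, since $\bar\partial_1(1\otimes y_1\otimes b)=y_1\otimes b-1\otimes y_1 b$, $G_0=\mathrm{id}$, and $\rho_0$ acts (up to sign) by $-\mathrm{split}_1(\cdot)$ with the correction series contributing nothing on irreducible input.

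For the inductive step, expand $\bar\partial_n(1\otimes[y_n|\cdots|y_1]\otimes b)$ into the two boundary terms $1\otimes[y_n|\cdots|y_2]\otimes y_1b$ and $(-1)^n\,y_n\otimes[y_{n-1}|\cdots|y_1]\otimes b$, together with the interior contractions $(-1)^i\,1\otimes[y_n|\cdots|\pi(y_{i+1}y_i)|\cdots|y_1]\otimes b$. Apply $G_{n-1}$ termwise by the induction hypothesis to rewrite each term as a sum $\sum_{w'}\mathrm{split}_{n-1}(w')\cdot(\text{tail})$ with $w'$ ranging over paths reducing to the contracted string. Then apply $\rho_{n-1}=\gamma_{n-1}+\sum_{i\ge1}\gamma_{n-1}(\delta_n\gamma_{n-1}-\partial_n\gamma_{n-1})^i$: the leading $\gamma_{n-1}$ term promotes each $\mathrm{split}_{n-1}(w')$ to $\mathrm{split}_n$ of the full path $y_n\cdots y_1$ (the top contribution $w=y_n\cdots y_1$), while the correction series inserts one extra reduction step per iteration, generating precisely the remaining strict reductions $w\prec y_n\cdots y_1$. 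The alternating signs coming from $\bar\partial_n$ and from $\gamma_{n-1}$ cancel the duplicates that arise when the same $w$ is obtained via different contraction orders, enforcing the ``counted only once'' clause in the statement.

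The main obstacle will be the combinatorial bookkeeping of which path $w\preceq y_n\cdots y_1$ arises from which combination of contraction in $\bar\partial_n$ and iteration in the correction series, and verifying the cancellations that eliminate overcounting. This is tractable in our setting because $S_n$ for $n\ge 2$ consists of only the four ambiguity families $P^n_{i+1},Q^n_i,sP^n_{i+1},sQ^n_i$, and the available reductions are strictly length-controlled (type~I shortens a path; type~II relocates a single dot), so $\{w\preceq y_n\cdots y_1\}$ is finite and every chain of reductions terminates. A cleaner alternative, which I would pursue in parallel, is to \emph{define} $\tilde G_n(a\otimes y\otimes b):=a\cdot\sum_{w\preceq y_n\cdots y_1}\mathrm{split}_n(w)\cdot b$, verify that $\tilde G_n$ satisfies the same recursion that uniquely determines $G_n$, and conclude $G_n=\tilde G_n$ by uniqueness; this replaces the explicit sign-tracking inside the correction series by a single compatibility check with $\rho_{n-1}$.
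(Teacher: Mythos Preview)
Your inductive framework and the recursion you intend to unfold are the same as the paper's. The gap is in the mechanism you propose for why the right-hand side emerges.

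You claim that after applying $G_{n-1}$ to all the pieces of $\bar\partial_n$ and then $\rho_{n-1}$, the various contributions combine via \emph{sign cancellation} to leave exactly one copy of each $\mathrm{split}_n(w)$. That is not what happens. The paper shows that after applying $\gamma_{n-1}$ to the $n+1$ terms produced by $\bar\partial_n$ (the two boundary terms and the $n-1$ interior contractions), all of them \emph{except the single term} $(-1)^n y_n\otimes[y_{n-1}|\cdots|y_1]\otimes b$ vanish outright. The reason is a constraint coming from the \emph{turning number} $\wp$ (defined just before the proof): any $qwr$ arising from $G_{n-1}$ applied to one of the other terms satisfies $\wp(qwr)\le n-2$, and since a type~I ambiguity $w\in S_{n-1}$ already has $\wp(w)=n-2$, there is no room left above $w$ for $q$ to produce an additional turn or dot, hence $\mathrm{split}_n(qw)=0$. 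So no telescoping of signs occurs; only one summand survives.

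The second missing ingredient is the concrete action of the correction series. The paper computes $\gamma_{n-1}(\delta_n-\partial_n)$ on a type~II term $a_{kj}s_j^\beta\otimes sQ_j^n\otimes a_{j+\varsigma,i}s_i^\alpha$ and shows it equals $a_{kj}s_j^{\beta-1}\otimes sQ_j^n\otimes a_{j+\varsigma,i}s_i^{\alpha+1}$: each iteration drops exactly one dot from the top $A$-factor to the bottom. This is the source of the sum over $w\preceq y_n\cdots y_1$, not ``inserting one extra reduction step.'' Your description of the correction series as generating strict reductions is too vague to carry the argument; what makes the proof go through is this explicit dot-dropping calculation together with the turning-number vanishing above.

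Your alternative plan (define $\tilde G_n$ by the formula and check it satisfies the recursion) is reasonable in principle, but to verify $\tilde G_n = \rho_{n-1}G_{n-1}\bar\partial_n$ you must still understand what $\rho_{n-1}$ does on the relevant inputs, which brings you back to the same two missing computations.
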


This description can be further simplified into a straightforward rule. So before proceeding to the proof, we first define a notion and consider an example.
\begin{definition}
  For a path $w \in Q_n$, we define the \emph{turning number} $\wp(w)$ of $w$ as the number of subpaths in $w$ that travel from a vertex $e_i$ 
  to its adjacent vertex $e_{i\pm 1}$ and then return to $e_i$.
\end{definition}
For example, we have $\wp(a_{ji}s_i^\alpha)=0$, $\wp(q_{i+1}p_{i+2}p_{i+1}s_i^\alpha q_i)=2$, $\wp(sQ^n_i)=n-2$, $\wp(Q^n_i)=n-1$.

\begin{proposition}
  If $v \preceq u$, then $\wp(v) \leq \wp(u)$.
\end{proposition}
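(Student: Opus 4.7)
The plan is to reduce the statement to the case of a single reduction step and then run through the two families of reduction rules by hand. Since $\preceq$ is the reflexive-transitive closure of the relation generated by single substitutions $s \mapsto \varphi_s$ for $s \in S$, and since in our system each $\varphi_s$ is a single monomial rather than a proper linear combination, any chain $v \preceq u$ decomposes into finitely many one-step reductions $u = u_0 \to u_1 \to \cdots \to u_N = v$. So it suffices to show that a single reduction step does not increase $\wp$. Throughout the analysis the dots $s_i$, being loops at a single vertex, neither alter the underlying vertex-transition sequence nor contribute to $\wp$, so I work entirely with the dot-free vertex sequence and count U-turns as consecutive triples of the form $(e_j, e_{j\pm 1}, e_j)$.

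For the Type II rules $s_i p_i \to p_i s_{i-1}$ and $s_i q_i \to q_i s_{i+1}$, both sides of each equality collapse to the same dot-free vertex sequence (namely $e_{i-1} \to e_i$ and $e_{i+1} \to e_i$ respectively). Substitution inside a longer path therefore preserves every consecutive triple of vertices, so $\wp(v) = \wp(u)$ at once.

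For the Type I rules $q_i p_{i+1} \to s_i$ (the rule $p_i q_{i-1} \to s_i$ is symmetric), the replaced subpath contributes the local sequence $e_i, e_{i+1}, e_i$ while its image contributes only $e_i$. Let $a$ and $b$ denote the vertices immediately flanking the substitution in the dot-free sequence (each in $\{e_{i-1}, e_{i+1}\}$ when present). The triples that change under the substitution are
\[
(a, e_i, e_{i+1}),\quad (e_i, e_{i+1}, e_i),\quad (e_{i+1}, e_i, b)\ \text{in }u,\qquad (a, e_i, b)\ \text{in }v.
\]
The middle triple in $u$ is always a U-turn, whereas the new triple $(a, e_i, b)$ in $v$ is a U-turn only when $a = b$. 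A direct enumeration over the four choices $a,b \in \{e_{i-1}, e_{i+1}\}$ (and the simpler degenerate cases where $a$ or $b$ is absent) shows the single always-lost middle U-turn dominates the at-most-one potentially-gained U-turn; the net change in $\wp$ is $-2$ in three configurations and $0$ in the one case $a = b = e_{i-1}$. The main, essentially bookkeeping, step is precisely this triple-by-triple comparison at the substitution boundary. Combining the two types with transitivity gives $\wp(v) \le \wp(u)$ whenever $v \preceq u$.
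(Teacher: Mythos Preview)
Your proof is correct and follows the same strategy as the paper: reduce to a single reduction step, observe that Type~II moves leave the dot-free vertex sequence (hence $\wp$) untouched, and check that a Type~I move cannot increase the count. The paper's proof is a two-line sketch asserting essentially the same two facts; your version is simply a more explicit execution of that sketch, carrying out the boundary-triple bookkeeping the paper leaves implicit. One minor discrepancy: your case analysis gives a net change of $0$ or $-2$ in the non-degenerate cases (and $-1$ or $-2$ at the ends), whereas the paper loosely says $\wp$ ``may decrease by $1$''; both statements suffice for the monotonicity conclusion, and yours is the sharper one.
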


\begin{proof}
  Under a type~I reduction $pq \mapsto s$, the value of $\wp$ may decrease by $1$, 
  whereas under a type~II reduction $sq \mapsto qs$, it always remains unchanged. 
  Therefore, under any sequence of reductions, $\wp$ can only decrease monotonically.
\end{proof}

\begin{proposition}
  For $y=y_n\dots y_1$, $y_i\in \bar{A}$, $\wp(y)\le n-1$.
  For $w\in S_n$, $\wp(w)=n-1$ (type I) or $n-2$ (type II).
\end{proposition}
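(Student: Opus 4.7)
The plan is to handle both claims by a direct enumeration of turning pairs, reduced to one key preliminary observation: every irreducible path has $\wp = 0$. Indeed, an irreducible path has the form $a_{ji}s_i^\alpha$, namely a run of $s_i$-loops at $e_i$ followed by a strictly monotone sequence of $p$-arrows (going up) or $q$-arrows (going down). A turn is by definition a length-$2$ subpath $e_j \to e_{j\pm 1} \to e_j$, which in this quiver can only be of the form $q_k p_{k+1}$ or $p_k q_{k-1}$; both are elements of the reduction system $S$, hence forbidden inside any irreducible path.

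For the first claim, write $y = y_n \cdots y_1$ with each $y_l$ a nontrivial irreducible path of some length $k_l \ge 1$. The concatenation has $(k_1 + \cdots + k_n) - 1$ length-$2$ subpaths. Classify each such pair as \emph{internal} (contained within a single factor $y_l$) or \emph{junction} (straddling the boundary between $y_l$ and $y_{l+1}$). A quick count gives $\sum_l (k_l - 1) = (k_1 + \cdots + k_n) - n$ internal pairs, leaving exactly $n-1$ junction pairs. By the preliminary observation, no internal pair can be a turn, so the total turning number is bounded by the number of junction pairs, yielding $\wp(y) \le n-1$.

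For the second claim, enumerate the consecutive pairs in each ambiguity directly. A type I ambiguity $P^n_{i+1}$ or $Q^n_i$ is a strictly alternating word of length $n$ in the letters $\{p_{i+1}, q_i\}$, so every one of its $n-1$ consecutive pairs is either $p_{i+1}q_i$ or $q_ip_{i+1}$, both turns, giving $\wp = n-1$. A type II ambiguity $sP^n_{i+1}$ or $sQ^n_i$ has the same alternating structure but with a single $s$-letter attached at the target end; the $n-2$ pairs not touching the $s$ remain turns, whereas the unique pair involving the $s$, namely $(s_{i+1}, p_{i+1})$ or $(s_i, q_i)$, is not a turn because an $s$-arrow is a loop at a single vertex and the corresponding length-$2$ subpath fails to revisit its starting vertex. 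Hence $\wp = n-2$.

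The entire argument is a bookkeeping exercise, and no substantial obstacle is expected. The only step requiring a little care is the preliminary observation, which is essentially tautological once one notices that the two possible turn patterns $q_ip_{i+1}$ and $p_iq_{i-1}$ coincide exactly with the type I elements of the reduction system $S$.
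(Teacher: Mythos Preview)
Your proof is correct. The paper does not supply a proof for this proposition; it is stated as an immediate observation following the definition of the turning number, so there is nothing to compare against beyond noting that your argument makes explicit exactly the counting that the paper leaves to the reader. Your key observation---that the only possible turning patterns $q_ip_{i+1}$ and $p_iq_{i-1}$ are precisely the type~I elements of $S$, hence forbidden in any irreducible path---is the entire content, and your junction-versus-internal bookkeeping for the concatenation is the natural way to organize it.
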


\begin{example}\label{example}
  Take \[
	G_4\left( 1\otimes [a_{30}s_0|a_{04}|a_{41}s_1^2|a_{14}s_4]\otimes 1\right)\]
  as an example.
  Since all ambiguities in $S_n$ only bounces between two adjacent vertices, to obtain a path $w$ with $s\subset w\preceq y_n\dots y_1$ where $s\in S_n$, we first need to restrict the middle of the path $a_{30}s_0a_{04}a_{41}s_1^2a_{14}s_4$ between some two vertices, i.e. around some red strand. In this example, this leads to 3 cases:
\begin{equation}
  \begin{tikzpicture}[scale=0.5]
  \node[text=gray] at (-2,0){$A$};
  \node[text=gray] at (-2,0.7){$\otimes$};
  \node[text=gray] at (-2,1.5){$\bar{A}$};
  \node[text=gray] at (-2,2.5){$\otimes$};
  \node[text=gray] at (-2,3.5){$\bar{A}$};
  \node[text=gray] at (-2,4.5){$\otimes$};
  \node[text=gray] at (-2,5.5){$\bar{A}$};
  \node[text=gray] at (-2,6.5){$\otimes$};
  \node[text=gray] at (-2,7.5){$\bar{A}$};
  \node[text=gray] at (-2,8.3){$\otimes$};
  \node[text=gray] at (-2,9){$A$};
  \begin{scope}[xshift=-0.5cm]
    \foreach \x in {1,2,3,4} {
    \draw[M,line width=1pt] (\x,0) node[below] {$x_{\x}$} -- (\x,9);
  }
  \end{scope}
  \foreach \x in {0.5,2.5,4.5,6.5,8.5} {
    \draw[gray] (-0.3,\x)--(4.3,\x);
  }
  \draw[black,line width=1pt]
    (4,0) -- (4,0.5) 
    -- (4,1) to[out=90,in=-90] (1,2.5)
    -- (1,3) to[out=90,in=-90]
       (4,4.5) to[out=90,in=-90] (0,6.5)
    -- (0,7) to[out=90,in=-90] (3,8.5)
    -- (3,9);
  \fill[black] (4,0.75) circle (3pt);
  \fill[black] (1,2.65) circle (3pt);
  \fill[black] (1,2.95) circle (3pt);
  \fill[black] (0,6.75) circle (3pt);
  \draw[->,line width=1pt] (5,4.5) -- (7,4.5);
  \begin{scope}[xshift=8cm]
  \foreach \x in {0.5,1.5,2.5,3.5} {
    \draw[M,line width=1pt] (\x,0) -- (\x,9);
  }
  \node[below,text=M] at (1.5,0) {$x_2$};
  \draw[black,line width=1pt]
    (4,0) -- (4,0.5) 
    to[out=90,in=-90,looseness=0.7] (1,1.5) -- (1,3)
    to[out=90,in=-90] (2,4) -- (2,5.5)
    to[out=90,in=-90] (1,6.5) -- (1,8)
    to[out=90,in=-90,looseness=0.8] (3,9);
  \fill[black] (4,0.25) circle (3pt);
  \fill[black] (1,2.7) circle (3pt);
  \fill[black] (1,2.4) circle (3pt);
  \fill[black] (2,5.2) circle (3pt);
  \fill[black] (2,4.9) circle (3pt);
  \fill[black] (1,7.7) circle (3pt);
  \fill[black] (1,7.4) circle (3pt);
  \end{scope}
  \begin{scope}[xshift=14cm]
  \foreach \x in {0.5,1.5,2.5,3.5} {
    \draw[M,line width=1pt] (\x,0) -- (\x,9);
  }
  \node[below,text=M] at (2.5,0) {$x_3$};
  \draw[black,line width=1pt]
    (4,0) -- (4,0.5) 
    to[out=90,in=-90,looseness=0.8] (2,1.5) -- (2,3)
    to[out=90,in=-90] (3,4) -- (3,5.5)
    to[out=90,in=-90] (2,6.5) -- (2,8)
    to[out=90,in=-90] (3,9);
  \fill[black] (4,0.25) circle (3pt);
  \fill[black] (2,2.7) circle (3pt);
  \fill[black] (2,2.4) circle (3pt);
  \fill[black] (2,2.1) circle (3pt);
  \fill[black] (3,5.2) circle (3pt);
  \fill[black] (2,7.7) circle (3pt);
  \fill[black] (2,7.4) circle (3pt);
  \fill[black] (2,7.1) circle (3pt);
  \end{scope}
  \begin{scope}[xshift=20cm]
  \foreach \x in {0.5,1.5,2.5,3.5} {
    \draw[M,line width=1pt] (\x,0) -- (\x,9);
  }
  \node[below,text=M] at (3.5,0) {$x_4$};
  \draw[black,line width=1pt]
    (4,0) -- (4,0.5) 
    to[out=90,in=-90] (3,1.5) -- (3,3)
    to[out=90,in=-90] (4,4) -- (4,5.5)
    to[out=90,in=-90] (3,6.5) -- (3,8)
    -- (3,9);
  \fill[black] (4,0.25) circle (3pt);
  \fill[black] (3,2.7) circle (3pt);
  \fill[black] (3,2.4) circle (3pt);
  \fill[black] (3,2.1) circle (3pt);
  \fill[black] (3,1.8) circle (3pt);
  \fill[black] (3,7.7) circle (3pt);
  \fill[black] (3,7.4) circle (3pt);
  \fill[black] (3,7.1) circle (3pt);
  \fill[black] (3,6.8) circle (3pt);
  \end{scope}
  \end{tikzpicture}\label{eq:181}
\end{equation}

The next step is to perform type~II reductions to split off a subpath in $S_n$. 
Since reductions only move dots downward, and ambiguities in $S_n$ 
cannot contain a dot in the middle, all dots except those at the top 
(i.e. at the position corresponding to the source of $y_n$) must be moved to the bottom. 
Each time a top dot is dropped, a new term appears in the resulting expression. 
Thus, the first diagram on the RHS of~\eqref{eq:181} contributes three terms:

\begin{equation}\label{eq:182}
  \begin{tikzpicture}[scale=0.5]
  \foreach \x in {0.5,1.5,2.5,3.5} {
    \draw[M,line width=1pt] (\x,0) -- (\x,9);
  }
  \node[below,text=M] at (1.5,0) {$x_2$};
  \draw[black,line width=1pt]
    (4,0) -- (4,0.5) 
    to[out=90,in=-90,looseness=0.7] (1,1.5) -- (1,3)
    to[out=90,in=-90] (2,4) -- (2,5.5)
    to[out=90,in=-90] (1,6.5) -- (1,8)
    to[out=90,in=-90,looseness=0.8] (3,9);
  \fill[black] (4,0.25) circle (3pt);
  \fill[black] (1,2.7) circle (3pt);
  \fill[black] (1,2.4) circle (3pt);
  \fill[black] (2,5.2) circle (3pt);
  \fill[black] (2,4.9) circle (3pt);
  \fill[black] (1,7.7) circle (3pt);
  \fill[black] (1,7.4) circle (3pt);
  \draw[->,line width=1pt] (4.5,4.5) -- (6.5,4.5);
  \begin{scope}[xshift=7cm,yshift=0.5cm]
  \foreach \x in {0.5,1.5,2.5,3.5} {
    \draw[M,line width=1pt] (\x,-0.5) -- (\x,8.5);
  }
  \draw[gray] (-0.3,3)--(4.3,3);
  \draw[gray] (-0.3,7)--(4.3,7);
  \node[below,text=M] at (1.5,-0.5) {$x_2$};
  \draw[black,line width=1pt]
    (4,-0.5) -- (4,1.7) 
    to[out=90,in=-90,looseness=0.7] (2,3) 
    to[out=90,in=-90] (1,4) 
    to[out=90,in=-90] (2,5)
    to[out=90,in=-90] (1,6) -- (1,7)-- (1,7.5)
    to[out=90,in=-90] (3,8.5);
  \fill[black] (4,0.3) circle (3pt);
  \fill[black] (4,0.6) circle (3pt);
  \fill[black] (4,0.9) circle (3pt);
  \fill[black] (4,-0.3) circle (3pt);
  \fill[black] (4,0) circle (3pt);
  \fill[black] (1,7.5) circle (3pt);
  \fill[black] (1,6.5) circle (3pt);
  \end{scope}
  \node at (12,4.5) {$+$};
  \begin{scope}[xshift=13cm,yshift=0.5cm]
  \foreach \x in {0.5,1.5,2.5,3.5} {
    \draw[M,line width=1pt] (\x,-0.5) -- (\x,8.5);
  }
  \draw[gray] (-0.3,3)--(4.3,3);
  \draw[gray] (-0.3,7)--(4.3,7);
  \node[below,text=M] at (1.5,-0.5) {$x_2$};
  \draw[black,line width=1pt]
    (4,-0.5) -- (4,1.7) 
    to[out=90,in=-90,looseness=0.7] (2,3) 
    to[out=90,in=-90] (1,4) 
    to[out=90,in=-90] (2,5)
    to[out=90,in=-90] (1,6) -- (1,7)-- (1,7.5)
    to[out=90,in=-90] (3,8.5);
  \fill[black] (4,-.3) circle (3pt);
  \fill[black] (4,0) circle (3pt);
  \fill[black] (4,0.3) circle (3pt);
  \fill[black] (4,0.6) circle (3pt);
  \fill[black] (4,0.9) circle (3pt);
  \fill[black] (4,1.2) circle (3pt);
  \fill[black] (1,6.5) circle (3pt);
  \end{scope}
  \node at (18,4.5) {$+$};
  \begin{scope}[xshift=19cm,yshift=0.5cm]
  \foreach \x in {0.5,1.5,2.5,3.5} {
    \draw[M,line width=1pt] (\x,-0.5) -- (\x,8.5);
  }
  \draw[gray] (-0.3,3)--(4.3,3);
  \draw[gray] (-0.3,7)--(4.3,7);
  \node[below,text=M] at (1.5,-0.5) {$x_2$};
  \draw[black,line width=1pt]
    (4,-0.5) -- (4,1.7) 
    to[out=90,in=-90,looseness=0.7] (2,3) 
    to[out=90,in=-90] (1,4) 
    to[out=90,in=-90] (2,5)
    to[out=90,in=-90] (1,6)
    to[out=90,in=-90] (2,7)--(2,7.5)
    to[out=90,in=-90] (3,8.5);
  \fill[black] (4,-0.3) circle (3pt);
  \fill[black] (4,0) circle (3pt);
  \fill[black] (4,0.3) circle (3pt);
  \fill[black] (4,0.6) circle (3pt);
  \fill[black] (4,0.9) circle (3pt);
  \fill[black] (4,1.2) circle (3pt);
  \fill[black] (4,1.5) circle (3pt);
  \node[text=gray] at (6,1.3){$A$};
  \node[text=gray] at (6,3){$\otimes$};
  \node[text=gray] at (6,5){$S_4$};
  \node[text=gray] at (6,7){$\otimes$};
  \node[text=gray] at (6,8){$A$};
  \end{scope}
  \node at (31,3){$\phantom{\otimes}$};
  \node at (31,5){$\phantom{S_4}$};
  \end{tikzpicture}
\end{equation}
while the second and the third diagrams each produce four terms:
\begin{equation}
  \begin{tikzpicture}[scale=0.5]
  \foreach \x in {0.5,1.5,2.5,3.5} {
    \draw[M,line width=1pt] (\x,0) -- (\x,9);
  }
  \node[below,text=M] at (2.5,0) {$x_3$};
  \draw[black,line width=1pt]
    (4,0) -- (4,0.5) 
    to[out=90,in=-90,looseness=0.8] (2,1.5) -- (2,3)
    to[out=90,in=-90] (3,4) -- (3,5.5)
    to[out=90,in=-90] (2,6.5) -- (2,8)
    to[out=90,in=-90] (3,9);
  \fill[black] (4,0.25) circle (3pt);
  \fill[black] (2,2.7) circle (3pt);
  \fill[black] (2,2.4) circle (3pt);
  \fill[black] (2,2.1) circle (3pt);
  \fill[black] (3,5.2) circle (3pt);
  \fill[black] (2,7.7) circle (3pt);
  \fill[black] (2,7.4) circle (3pt);
  \fill[black] (2,7.1) circle (3pt);
  \draw[->,line width=1pt] (4.5,4.5) -- (6.5,4.5);
  \begin{scope}[xshift=7cm,yshift=0.5cm]
  \foreach \x in {0.5,1.5,2.5,3.5} {
    \draw[M,line width=1pt] (\x,-0.5) -- (\x,8.5);
  }
  \draw[gray] (-0.3,3)--(4.3,3);
  \draw[gray] (-0.3,7)--(4.3,7);
  \node[below,text=M] at (2.5,-0.5) {$x_3$};
  \draw[black,line width=1pt]
    (4,-0.5) -- (4,2) 
    to[out=90,in=-90] (3,3) 
    to[out=90,in=-90] (2,4) 
    to[out=90,in=-90] (3,5)
    to[out=90,in=-90] (2,6) -- (2,7)-- (2,7.5)
    to[out=90,in=-90] (3,8.5);
  \fill[black] (4,-.3) circle (3pt);
  \fill[black] (4,0) circle (3pt);
  \fill[black] (4,0.3) circle (3pt);
  \fill[black] (4,0.6) circle (3pt);
  \fill[black] (4,0.9) circle (3pt);
  \fill[black] (2,6.5) circle (3pt);
  \fill[black] (2,7.2) circle (3pt);
  \fill[black] (2,7.5) circle (3pt);
  \end{scope}
  \node at (12,4.5) {$+$};
  \begin{scope}[xshift=13cm,yshift=0.5cm]
  \foreach \x in {0.5,1.5,2.5,3.5} {
    \draw[M,line width=1pt] (\x,-0.5) -- (\x,8.5);
  }
  \draw[gray] (-0.3,3)--(4.3,3);
  \draw[gray] (-0.3,7)--(4.3,7);
  \node[below,text=M] at (2.5,-0.5) {$x_3$};
  \draw[black,line width=1pt]
    (4,-0.5) -- (4,2) 
    to[out=90,in=-90,looseness=0.7] (3,3) 
    to[out=90,in=-90] (2,4) 
    to[out=90,in=-90] (3,5)
    to[out=90,in=-90] (2,6) -- (2,7)-- (2,7.5)
    to[out=90,in=-90] (3,8.5);
  \fill[black] (4,-.3) circle (3pt);
  \fill[black] (4,0) circle (3pt);
  \fill[black] (4,0.3) circle (3pt);
  \fill[black] (4,0.6) circle (3pt);
  \fill[black] (4,0.9) circle (3pt);
  \fill[black] (4,1.2) circle (3pt);
  \fill[black] (2,6.5) circle (3pt);
  \fill[black] (2,7.5) circle (3pt);
  \end{scope}
  \node at (18,4.5) {$+$};
  \begin{scope}[xshift=19cm,yshift=0.5cm]
  \foreach \x in {0.5,1.5,2.5,3.5} {
    \draw[M,line width=1pt] (\x,-0.5) -- (\x,8.5);
  }
  \draw[gray] (-0.3,3)--(4.3,3);
  \draw[gray] (-0.3,7)--(4.3,7);
  \node[below,text=M] at (2.5,-0.5) {$x_3$};
  \draw[black,line width=1pt]
    (4,-0.5) -- (4,2) 
    to[out=90,in=-90,looseness=0.7] (3,3) 
    to[out=90,in=-90] (2,4) 
    to[out=90,in=-90] (3,5)
    to[out=90,in=-90] (2,6) -- (2,7)-- (2,7.5)
    to[out=90,in=-90] (3,8.5);
  \fill[black] (4,-.3) circle (3pt);
  \fill[black] (4,0) circle (3pt);
  \fill[black] (4,0.3) circle (3pt);
  \fill[black] (4,0.6) circle (3pt);
  \fill[black] (4,0.9) circle (3pt);
  \fill[black] (4,1.2) circle (3pt);
  \fill[black] (4,1.5) circle (3pt);
  \fill[black] (2,6.5) circle (3pt);
  \end{scope}
  \node at (24,4.5) {$+$};
  \begin{scope}[xshift=25cm,yshift=0.5cm]
  \foreach \x in {0.5,1.5,2.5,3.5} {
    \draw[M,line width=1pt] (\x,-0.5) -- (\x,8.5);
  }
  \draw[gray] (-0.3,3)--(4.3,3);
  \draw[gray] (-0.3,7)--(4.3,7);
  \node[below,text=M] at (2.5,-0.5) {$x_3$};
  \draw[black,line width=1pt]
    (4,-0.5) -- (4,2) 
    to[out=90,in=-90] (3,3) 
    to[out=90,in=-90] (2,4) 
    to[out=90,in=-90] (3,5)
    to[out=90,in=-90] (2,6)
    to[out=90,in=-90] (3,7)--(3,8.5);
  \fill[black] (4,-0.3) circle (3pt);
  \fill[black] (4,0) circle (3pt);
  \fill[black] (4,0.3) circle (3pt);
  \fill[black] (4,0.6) circle (3pt);
  \fill[black] (4,0.9) circle (3pt);
  \fill[black] (4,1.2) circle (3pt);
  \fill[black] (4,1.5) circle (3pt);
  \fill[black] (4,1.8) circle (3pt);
  \node[text=gray] at (6,1.3){$A$};
  \node[text=gray] at (6,3){$\otimes$};
  \node[text=gray] at (6,5){$S_4$};
  \node[text=gray] at (6,7){$\otimes$};
  \node[text=gray] at (6,8){$A$};
  \end{scope}
  \end{tikzpicture}
\end{equation}

\begin{equation}
  \begin{tikzpicture}[scale=0.5]
  \foreach \x in {0.5,1.5,2.5,3.5} {
    \draw[M,line width=1pt] (\x,0) -- (\x,9);
  }
  \node[below,text=M] at (3.5,0) {$x_4$};
  \draw[black,line width=1pt]
    (4,0) -- (4,0.5) 
    to[out=90,in=-90] (3,1.5) -- (3,3)
    to[out=90,in=-90] (4,4) -- (4,5.5)
    to[out=90,in=-90] (3,6.5) -- (3,8)
    -- (3,9);
  \fill[black] (4,0.25) circle (3pt);
  \fill[black] (3,2.7) circle (3pt);
  \fill[black] (3,2.4) circle (3pt);
  \fill[black] (3,2.1) circle (3pt);
  \fill[black] (3,1.8) circle (3pt);
  \fill[black] (3,7.7) circle (3pt);
  \fill[black] (3,7.4) circle (3pt);
  \fill[black] (3,7.1) circle (3pt);
  \fill[black] (3,6.8) circle (3pt);
  \draw[->,line width=1pt] (4.5,4.5) -- (6.5,4.5);
  \begin{scope}[xshift=7cm,yshift=0.5cm]
  \foreach \x in {0.5,1.5,2.5,3.5} {
    \draw[M,line width=1pt] (\x,-0.5) -- (\x,8.5);
  }
  \draw[gray] (-0.3,3)--(4.3,3);
  \draw[gray] (-0.3,7)--(4.3,7);
  \node[below,text=M] at (3.5,-0.5) {$x_4$};
  \draw[black,line width=1pt]
    (4,-0.5) -- (4,3) 
    to[out=90,in=-90] (3,4) 
    to[out=90,in=-90] (4,5)
    to[out=90,in=-90] (3,6) -- (3,8.5);
  \fill[black] (4,-.3) circle (3pt);
  \fill[black] (4,0) circle (3pt);
  \fill[black] (4,0.3) circle (3pt);
  \fill[black] (4,0.6) circle (3pt);
  \fill[black] (4,0.9) circle (3pt);
  \fill[black] (3,6.5) circle (3pt);
  \fill[black] (3,7.2) circle (3pt);
  \fill[black] (3,7.5) circle (3pt);
  \fill[black] (3,7.8) circle (3pt);
  \end{scope}
  \node at (12,4.5) {$+$};
  \begin{scope}[xshift=13cm,yshift=0.5cm]
  \foreach \x in {0.5,1.5,2.5,3.5} {
    \draw[M,line width=1pt] (\x,-0.5) -- (\x,8.5);
  }
  \draw[gray] (-0.3,3)--(4.3,3);
  \draw[gray] (-0.3,7)--(4.3,7);
  \node[below,text=M] at (3.5,-0.5) {$x_4$};
  \draw[black,line width=1pt]
    (4,-0.5) -- (4,3) 
    to[out=90,in=-90] (3,4) 
    to[out=90,in=-90] (4,5)
    to[out=90,in=-90] (3,6) -- (3,8.5);
  \fill[black] (4,-.3) circle (3pt);
  \fill[black] (4,0) circle (3pt);
  \fill[black] (4,0.3) circle (3pt);
  \fill[black] (4,0.6) circle (3pt);
  \fill[black] (4,0.9) circle (3pt);
  \fill[black] (4,1.2) circle (3pt);
  \fill[black] (3,6.5) circle (3pt);
  \fill[black] (3,7.5) circle (3pt);
  \fill[black] (3,7.8) circle (3pt);
  \end{scope}
  \node at (18,4.5) {$+$};
  \begin{scope}[xshift=19cm,yshift=0.5cm]
  \foreach \x in {0.5,1.5,2.5,3.5} {
    \draw[M,line width=1pt] (\x,-0.5) -- (\x,8.5);
  }
  \draw[gray] (-0.3,3)--(4.3,3);
  \draw[gray] (-0.3,7)--(4.3,7);
  \node[below,text=M] at (3.5,-0.5) {$x_4$};
  \draw[black,line width=1pt]
    (4,-0.5) -- (4,3) 
    to[out=90,in=-90] (3,4) 
    to[out=90,in=-90] (4,5)
    to[out=90,in=-90] (3,6) -- (3,8.5);
  \fill[black] (4,-.3) circle (3pt);
  \fill[black] (4,0) circle (3pt);
  \fill[black] (4,0.3) circle (3pt);
  \fill[black] (4,0.6) circle (3pt);
  \fill[black] (4,0.9) circle (3pt);
  \fill[black] (4,1.2) circle (3pt);
  \fill[black] (4,1.5) circle (3pt);
  \fill[black] (3,6.5) circle (3pt);
  \fill[black] (3,7.8) circle (3pt);
  \end{scope}
  \node at (24,4.5) {$+$};
  \begin{scope}[xshift=25cm,yshift=0.5cm]
  \foreach \x in {0.5,1.5,2.5,3.5} {
    \draw[M,line width=1pt] (\x,-0.5) -- (\x,8.5);
  }
  \draw[gray] (-0.3,3)--(4.3,3);
  \draw[gray] (-0.3,7)--(4.3,7);
  \node[below,text=M] at (3.5,-0.5) {$x_4$};
  \draw[black,line width=1pt]
    (4,-0.5) -- (4,3) 
    to[out=90,in=-90] (3,4) 
    to[out=90,in=-90] (4,5)
    to[out=90,in=-90] (3,6) -- (3,8.5);
  \fill[black] (4,-.3) circle (3pt);
  \fill[black] (4,0) circle (3pt);
  \fill[black] (4,0.3) circle (3pt);
  \fill[black] (4,0.6) circle (3pt);
  \fill[black] (4,0.9) circle (3pt);
  \fill[black] (4,1.2) circle (3pt);
  \fill[black] (4,1.5) circle (3pt);
  \fill[black] (4,1.8) circle (3pt);
  \fill[black] (3,6.5) circle (3pt);
  \node[text=gray] at (6,1.3){$A$};
  \node[text=gray] at (6,3){$\otimes$};
  \node[text=gray] at (6,5){$S_4$};
  \node[text=gray] at (6,7){$\otimes$};
  \node[text=gray] at (6,8){$A$};
  \end{scope}
  \end{tikzpicture}\label{eq:184}
\end{equation}

Therefore, the result contains 11 terms:
\begin{align}
G_4&\left( 1\otimes [a_{30}s_0|a_{04}|a_{41}s_{1}^{2}|a_{14}s_4]\otimes 1 \right) 
\\
&=a_{31}s_1\otimes sQ_{1}^{4}\otimes a_{24}s_{4}^{5}+a_{31}\otimes sQ_{1}^{4}\otimes a_{24}s_{4}^{6}+a_{31}\otimes P_{2}^{4}\otimes a_{24}s_{4}^{7}\notag
\\
&\;\;+a_{32}s_{2}^{2}\otimes sQ_{2}^{4}\otimes a_{34}s_{4}^{5}+a_{32}s_2\otimes sQ_{2}^{4}\otimes a_{34}s_{4}^{6}+a_{32}\otimes sQ_{2}^{4}\otimes a_{34}s_{4}^{7}+1\otimes P_{3}^{4}\otimes a_{34}s_{4}^{8}\notag
\\
&\;\;+s_{3}^{3}\otimes sQ_{3}^{4}\otimes s_{4}^{5}+s_{3}^{2}\otimes sQ_{3}^{4}\otimes s_{4}^{6}+s_3\otimes sQ_{3}^{4}\otimes s_{4}^{7}+1\otimes sQ_{3}^{4}\otimes s_{4}^{8}\notag
\end{align}

This example should give a comprehensive recipe for evaluating $G_n$ on arbitrary input.
\end{example}
Now we write down the proof for Theorem~\ref{thm:Gn}

\begin{proof}
  \mathleft
	By induction, the base case is clear 
	\begin{gather}
		G_0\left( a\otimes b \right) =a\cdot \rho _{-1}G_{-1}d_0\left( 1\otimes b \right)=a\otimes b
    \\
    \Longrightarrow G_0=\mathrm{id}_{A\otimes A}
	\end{gather}
which is expected. If this is too abstract, we can go a step further,
\begin{gather}
G_1\left( a\otimes y_1\otimes b \right) =a\cdot \rho _0G_0\bar{\partial}_1\left( 1\otimes y_1\otimes b \right)
\\
=a\cdot\rho _0\left( 1\otimes\pi \left( y_1b \right) -y_1\otimes b \right)
\\
=a\cdot \mathrm{split}_1\left( y_1 \right) \cdot b
\end{gather}
Again, this is expected.
	
Now, suppose the result holds for $G_{n-1}$, then
\begin{gather}
G_n\left( a\otimes [y_n| \cdots | y_1]\otimes b \right) =a\cdot\rho _{n-1}G_{n-1}\bar{\partial}_n\left( 1\otimes [y_n| \cdots| y_1]\otimes b \right) \cdot b
\\
=a\cdot \rho _{n-1}\left( \begin{array}{l}
	\sum_{x\preceq y_n\dots y_2}{\mathrm{split}_{n-1}\left( x \right)}\cdot \pi(y_1b)\\
	+\sum_{i=1}^{n-1}{\sum_{x\preceq y_n\cdots \pi \left( y_{i+1}y_{i} \right) \cdots y_1}{\left( -1 \right) ^i \mathrm{split}_{n-1}\left( x \right)\cdot b}}\\
	+\left( -1 \right) ^n\sum_{x\preceq y_{n-1}\cdots y_{1}}{y_n\cdot \mathrm{split}_{n-1}\left( x \right)}\cdot y_n\\
\end{array} \right)
\\
= a\cdot\left( 1+\sum_{i\geqslant 1}{\left( \gamma _{n-1}\left( \delta _n-\partial _n \right) \right) ^i} \right)\label{eq:193}\\
\qquad\qquad(-1)^n\left( \begin{array}{l}
	\sum_{\begin{smallmatrix}
	qwr\preceq y_n\dots y_2\\
	q\otimes w\otimes r\in P_{n-1}\\
\end{smallmatrix}}{\mathrm{split}_n(qw)\cdot \pi (ry_1b)}\\
	+\sum_{i=1}^{n-1}{\sum_{\begin{smallmatrix}
	qwr\preceq y_n\dots \pi \left( y_{i+1}y_{i} \right) \dots y_1\\
	q\otimes w\otimes r\in P_{n-1}\\
\end{smallmatrix}}{\left( -1 \right) ^i\mathrm{split}_n( qw)\cdot \pi(rb)}}\\
	+\left( -1 \right) ^n\sum_{\begin{smallmatrix}
	qwr\preceq y_{n-1}\dots y_{1}\\
	q\otimes w\otimes r\in P_{n-1}\\
\end{smallmatrix}}{\mathrm{split}_n(\pi(y_nq)w)\cdot \pi \left( rb \right)}\\
\end{array} \right) \notag
\end{gather}

All but the last term in the large parenthesis in~\eqref{eq:193} vanish. 
The reason is as follows. For any other term to be nonvanishing, it must contain a subpath in $S_n$ inside $qw$ with $q \in A$ and $w \in S_{n-1}$. 
Since ambiguities can contain a dot only at the end of a path, $w$ must be of type~I. 
Moreover, because $qwr$ is reduced from a path obtained by concatenating $n-1$ irreducible paths, 
we have $\wp(qwr) \leq n-2$. 
However, since $w \in S_{n-1}$ is a type I ambiguity, $\wp(w) = n-2$, 
which means that the endpoint of $w$ must already lie in the middle of $y_n$. 
Consequently, the beginning of $r$ can neither turn nor carry a dot, 
and therefore no $S_n$ element can be split off.

So we have for now,
\begin{gather}
G_n\left( a\otimes [y_n|\cdots | y_1]\otimes b \right) =\cdots 
\\
= a\cdot\left( 1+\sum_{i\geqslant 1}{\left( \gamma _{n-1}\left( \delta _n-\partial _n \right) \right) ^i} \right) 
\sum_{\begin{smallmatrix}
	qwr\preceq y_{n-1}\dots y_{1}\\
	q\otimes w\otimes r\in P_{n-1}\\
\end{smallmatrix}}{\mathrm{split}_n(\pi(y_nq)w)\cdot \pi \left( rb \right)}
\\
= a\cdot \sum_{\begin{smallmatrix}
	qwr\preceq y_{n-1}\dots y_{1}\\
	q\otimes w\otimes r\in P_{n-1}\\
\end{smallmatrix}}{(\mathrm{split}_n(\pi(y_nq)w)\cdot r)}\cdot b\label{eq:196}
\\
\qquad
+a\cdot\left( 1+\sum_{i\geqslant 1}{\left( \gamma _{n-1}\left( \delta _n-\partial _n \right) \right) ^i} \right) \gamma _{n-1}\left( \delta _n-\partial _n \right)
\sum_{\begin{smallmatrix}
	qwr\preceq y_{n-1}\dots y_{1}\\
	q\otimes w\otimes r\in P_{n-1}\\
\end{smallmatrix}}{\mathrm{split}_n(\pi(y_nq)w)\cdot \pi \left( rb \right)}\notag
\end{gather}

Now we have obtained a specific term in the result:
\[
a \cdot 
\sum_{\begin{smallmatrix}
	qwr \preceq y_{n-1}\dots y_{1}\\
	q\otimes w\otimes r\in P_{n-1}
\end{smallmatrix}}
(\mathrm{split}_n(\pi(y_nq)w)\cdot r)
\cdot b.
\]
Let us analyze the diagrammatic meaning of this term.  
In this case, $w$ must again be a type~I ambiguity.  
The expression $\mathrm{split}_n(\pi(y_nq)w)\cdot r$ means that the path $y_{n-1}\dots y_2y_1$ is reduced to $qwr$ such that $w\in S_{n-1}$ is a type~I ambiguity and $q,r\in A$ are irreducible and then fully reduce $y_nq$ so that $w$, together with the first arrow of $\pi(y_nq)$, forms an ambiguity in $S_n$, which is subsequently split off.  

In terms of Example~\ref{example}, this means that the dots at the top cannot drop down (as they are created after applying $\pi(y_nq)$), whereas all other dots originally at lower positions can freely move to the bottom (as they are produced during the $G_{n-1}$ step). Hence, this term corresponds to the first diagrams among the resulting terms in~\eqref{eq:182}--\eqref{eq:184}.

We are now left with the second term in~\eqref{eq:196}. 
To evaluate this term, we first apply $\delta_n - \partial_n$ to $\sum_{\begin{smallmatrix}
	qwr\preceq y_{n-1}\dots y_{1}\\
	q\otimes w\otimes r\in P_{n-1}\\
\end{smallmatrix}}{\mathrm{split}_n(\pi(y_nq)w)\cdot \pi \left( rb \right)}$, and then apply $\gamma_{n-1}$. Recall that $\delta_n-\partial _n$ acts on the ambiguities by

\begin{gather}
		\left(\delta_n-\partial _n\right)\left( 1\otimes Q^n_i\otimes 1 \right)  =\left( -1 \right) ^n 1\otimes sQ^{n-1}_i\otimes 1\notag
		\\
		\left(\delta_n-\partial _n\right)\left( 1\otimes sQ^n_i \otimes 1 \right) =\left( -1 \right) ^n q_i\otimes sP_{i+1}^{n-1}\otimes 1+\left( -1 \right) ^n 1\otimes Q_i^{n-1}\otimes s_{i+\varsigma}
\end{gather}
where $i+\varsigma$ marks the position of the source of $Q^{n-1}_i$, which is \[\varsigma=\begin{cases}
      1,& n\text{ even},\\
      0,& n\text{ odd}.
    \end{cases}\] 
If we interchange $P\leftrightarrow Q$, $i\leftrightarrow i+1$, $\varsigma$ should be replaced by $-\varsigma$.

Note that $\delta_n - \partial_n$ maps a type~I ambiguity to a type~II ambiguity, 
and a type~II ambiguity to a sum of one type~I term and one type~II term. 
The input \[\sum_{\begin{smallmatrix}
	qwr\preceq y_{n-1}\dots y_{1}\\
	q\otimes w\otimes r\in P_{n-1}\\
\end{smallmatrix}}{\mathrm{split}_n(\pi(y_nq)w)\cdot \pi \left( rb \right)}\]
may contain both type~I and type~II terms.  
However, as observed earlier, only type~I terms survive after applying $\gamma_{n-1}$.  
Since only type~II terms produce type~I terms under the action of $\delta_n - \partial_n$, 
we conclude that only the type~II components in the input are relevant.  
In the diagrammatic language of Example~\ref{example}, 
these correspond precisely to the terms that contain at least one dot in the top position.  

In general, a type~II term takes the form
\[
a_{kj}s_j^\beta \otimes sQ^n_j \otimes a_{j+\varsigma,i}s_i^\alpha,
\]
and applying $\gamma_{n-1}(\delta_n - \partial_n)$ to it yields

\begin{gather}
\gamma _{n-1}( \delta _n-\partial _n ) ( a_{kj}s_{j}^{\beta}\otimes sQ_{j}^{n}\otimes a_{j+\varsigma ,i}s_{i}^{\alpha} ) 
\\
=( -1 ) ^n\gamma _{n-1}( \pi ( a_{kj}s_{j}^{\beta}q_j ) \otimes sP_{j+1}^{n-1}\otimes a_{j+\varsigma ,i}s_{i}^{\alpha}+a_{kj}s_{j}^{\beta}\otimes Q_{j}^{n-1}\otimes a_{j+\varsigma ,i}s_{i}^{\alpha +1} ) 
\\
=( -1 ) ^{2n}( \mathrm{split}_n( \pi ( a_{kj}s_{j}^{\beta}q_j ) \otimes sP_{j+1}^{n-1} ) \cdot a_{j+\varsigma ,i}s_{i}^{\alpha}+\mathrm{split}_n( a_{kj}s_{j}^{\beta}Q_{j}^{n-1} ) \cdot a_{j+\varsigma ,i}s_{i}^{\alpha +1} ) 
\\
=\mathrm{split}_n( a_{kj}s_{j}^{\beta}Q_{j}^{n-1} ) \cdot a_{j+\varsigma ,i}s_{i}^{\alpha +1}
\\
=a_{kj}s_{j}^{\beta-1}\otimes sQ_{j}^{n}\otimes a_{j+\varsigma ,i}s_{i}^{\alpha +1}
\end{gather}
which is, precisely dropping one dot at the top position to the bottom. This gives all the second diagrams in \eqref{eq:182}--\eqref{eq:184}.

The infinite sum $1+\sum_{i\geqslant 1}{\left( \gamma _{n-1}\left( \delta _n-\partial _n \right) \right) ^i}$ 
iteratively carries out this process, each time dropping one dot from the top to the bottom. 
The procedure terminates once no dots remain in the top position.  
At this stage, if there is a turning, we split out a type~I ambiguity, applying $\gamma_{n-1}(\delta_n-\partial_n)$ then yields zero; if there is not, the result vanishes already.

Hence, this construction produces exactly all the possible terms described in Theorem~\ref{thm:Gn} and Example~\ref{example}.

\end{proof}

\section{Components of the $A_\infty$-natural transformations}

Now we are ready to write down the explicit components of the natural transformations. 
Although one could in principle write out all cocycles, for simplicity we present only one representative from each cohomology class.

\subsection{$\mathrm{Nat}(\mathrm{id},\mathrm{id})$}
For a degree $g$ natural transformation $\eta$, $\eta^d$ has a degree shift $[g-d]$. Since in the subcategory $\mathcal{C}$, all morphisms have degree $0$, thus for a degree $g$ natural transformation $\eta\colon\mathrm{id}\Rightarrow \mathrm{id}$, only the $\eta^g$ component is nonzero on morphisms between generators $T_i$.

\subsubsection{Degree 0}
Recall from Section~\ref{sect:HHid} that the cocycles $\phi\in \Hom (P_0,\Delta)$ are parametrized by $\{\epsilon_\ell\}_{\ell\ge 0}\in\Bbbk^{\Z_{\ge 0}}$,
\begin{equation}
  \epsilon_\ell=\phi _{\ell}( e_i ),\quad \ell\ge 0,\;\forall\, 0\le i\le |\mathbf{x}|.
\end{equation}
Since $G_0=id_{A\otimes A}$, the result is:
\begin{theorem}\label{thm:Nat0}
  The degree 0 natural transformations \[\eta_{\{\epsilon_\ell\}}\colon \mathrm{id}\Rightarrow \mathrm{id},
\qquad \{\epsilon_\ell\}_{\ell\ge 0}\in\Bbbk^{\Z_{\ge 0}}\cong\mathrm{HH}^0(\Delta)\] are given by
\begin{gather}
  \eta^0(T_i)=\sum_{\ell=0}^{\infty}\epsilon_{\ell}\, s_i^\ell\\
  \eta^1=\eta^2=\eta^3=\cdots=0
\end{gather}
\end{theorem}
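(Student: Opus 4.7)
The plan is to reduce Theorem~\ref{thm:Nat0} to the Hochschild cohomology computation of Section~6.1 via the homotopy retract of Section~7, using crucially that all morphism spaces in $\mathcal{C}$ are concentrated in degree~$0$.

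First I would rule out higher components by a degree argument. A degree-$0$ pre-natural transformation $\eta : \mathrm{id} \Rightarrow \mathrm{id}$ has each $\eta^d$ landing in $\Hom^{-d}(T_{i_0}, T_{i_d})$, which vanishes for $d \ge 1$ since every $\Hom(T_i, T_j)$ sits in degree~$0$. Thus only $\eta^0$ can be nonzero on the subcategory generated by the $T_i$, and a priori one writes $\eta^0(T_i) = \sum_{\ell \ge 0} c_{i,\ell}\, s_i^{\ell}$ for some scalars $c_{i,\ell} \in \Bbbk$.

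Next I would verify the cocycle condition $\mu^1_{\Fun(\mathcal{C},\mathcal{C})}(\eta) = 0$ using \eqref{eq:mu1-natural}. Because $F = G = \mathrm{id}$ has $F^k = G^k = 0$ for $k \neq 1$, every surviving term in the first sum forces the partition to satisfy $s_i = 0$ and $s_j = 1$ for $j \ne i$, so $r = d+1$ and the outer composition involves $\mu^{d+1}_{\mathcal{C}}$, which vanishes unless $d = 1$. An analogous argument, using $\eta^{d-m+1} = 0$ for $d - m + 1 \ge 1$, makes the second sum vanish for $d \neq 1$. The only nontrivial constraint is at $d = 1$; after unwinding the Koszul signs it reads
\[
  \mu^2(\eta^0(T_j), a) \;=\; \mu^2(a, \eta^0(T_i))
  \qquad \forall\, a \in \Hom(T_i, T_j).
\]
Applying \eqref{eq:mu2} with $\delta(i,j,j) = \delta(i,i,j) = 0$, both sides reduce to $\sum_\ell c_{\bullet,\ell}\, a_{ji} s^{\alpha+\ell}$, forcing $c_{j,\ell} = c_{i,\ell}$ for all $i, j$. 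Hence $c_{i,\ell} =: \epsilon_\ell$ is independent of $i$.

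Finally I would match the family $\{\epsilon_\ell\}$ with the Hochschild cocycles identified in Section~6.1 via the retract $G_\bullet$. Since $G_0 = \mathrm{id}_{A \otimes A}$, a cocycle $\phi \in \Hom(P_0, \Delta)$ with $\phi_\ell(e_i) = \epsilon_\ell$ transports verbatim to the pre-natural transformation $\eta^0(T_i) = \phi(1 \otimes e_i \otimes 1) = \sum_\ell \epsilon_\ell\, s_i^{\ell}$. Degree-$(-1)$ pre-natural transformations vanish for the same degree reason, so there are no nontrivial coboundaries, and the assignment $\mathrm{HH}^0(\Delta) \to \mathrm{Nat}(\mathrm{id},\mathrm{id})/\!\sim$ is a bijection. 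The only real bookkeeping is the sign check at $d = 1$; all higher cocycle equations are automatic from the degree concentration of $\mathcal{C}$, so no serious obstacle arises.
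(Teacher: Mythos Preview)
Your proof is correct and follows essentially the same approach as the paper. The paper's argument is terser: it cites the degree-concentration remark at the start of Section~8.1 to kill $\eta^{d\ge 1}$, then invokes the computation of $\ker\d_0$ from Section~\ref{sect:HHid} together with $G_0=\mathrm{id}_{A\otimes A}$ to read off $\eta^0(T_i)=\sum_\ell\epsilon_\ell s_i^\ell$. Your direct verification of the $d=1$ cocycle equation $\mu^2(\eta^0(T_j),a)=\mu^2(a,\eta^0(T_i))$ is exactly the same computation as the paper's $\ker\d_0$ calculation, just carried out on the bar side rather than on the Chouhy--Solotar side; since $G_0$ is the identity these are literally the same constraint.
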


These are the natural transformations in the traditional sense.

\subsubsection{Degree 1}

\begin{theorem}
  The degree 1 natural transformations 
  \[\eta_{\{\sigma_\ell\}}\colon \mathrm{id}\Rightarrow \mathrm{id},
\qquad \{\sigma_\ell\}_{\ell\ge 0}\in\Bbbk ^{\Z_{\ge 0}}\cong \mathrm{HH}^1(\Delta)\] are given by
\begin{gather}
  \eta^1(a_{ji}s^\alpha)=\mathfrak{q}(a_{ji}s^\alpha)\sum_{\ell=0}^{\infty}\frac{1}{2}\sigma_{\ell} \, a_{ji}s^{\alpha+\ell}\\
  \eta^0=\eta^2=\eta^3=\cdots=0
\end{gather}
where
\begin{equation}
  \mathfrak{q}(a_{ji}s^\alpha)\coloneqq 2\alpha+|i-j|.
\end{equation}
\end{theorem}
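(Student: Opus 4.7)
The approach is to transport a representative cocycle for the class $\{\sigma_\ell\}\in\mathrm{HH}^1(\Delta)$ from the Chouhy--Solotar resolution back to the bar resolution via $G_1$, and then read off the corresponding $A_\infty$-natural transformation. The cocycle conditions derived in Section~\ref{sect:HHid} force $\phi_0(s_i)=0$, $\phi_\ell(s_i)=\sigma_{\ell-1}$ for $\ell\ge 1$, and $\phi_\ell(p_{i+1})+\phi_\ell(q_i)=\sigma_\ell$, while $\phi_\ell(p_{i+1})$ and $\phi_\ell(q_i)$ individually may be redistributed modulo coboundaries. I would pick the symmetric representative $\phi_\ell(p_k)=\phi_\ell(q_k)=\sigma_\ell/2$, which yields an answer manifestly uniform in $i$ and matches the stated formula.

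By the explicit formula $G_1(1\otimes y\otimes 1)=\mathrm{split}_1(y)$ (specializing Theorem~\ref{thm:Gn} to $n=1$), the desired component is $\eta^1(a_{ji}s^\alpha)=\phi\!\bigl(\mathrm{split}_1(a_{ji}s^\alpha)\bigr)$. The path $a_{ji}s^\alpha$ has total length $|i-j|+\alpha$, consisting of $\alpha$ dot arrows $s_i$ at the source end and $|i-j|$ oriented arrows (all $p$ if $i<j$, all $q$ if $i>j$). The splitting produces one summand per arrow position. Splitting at one of the $\alpha$ dots gives $u\cdot\phi(s_i)\cdot v=a_{ji}s_i^{\alpha-k}\cdot\sum_{\ell\ge 1}\sigma_{\ell-1}s_i^\ell\cdot s_i^{k-1}$, and summing over $k=1,\dots,\alpha$ contributes $\alpha\sum_{m\ge 0}\sigma_m\,a_{ji}s^{\alpha+m}$. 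Splitting at one of the $|i-j|$ oriented arrows contributes $\sum_\ell(\sigma_\ell/2)\,a_{ji}s^{\alpha+\ell}$ per arrow, using the key fact that $\delta(i,j,k)=0$ along any monotone chain of indices, so no extra dot corrections arise from $\mu^2$.

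Adding the two contributions yields $\bigl(\alpha+|i-j|/2\bigr)\sum_\ell\sigma_\ell\,a_{ji}s^{\alpha+\ell}=\tfrac{\mathfrak{q}(a_{ji}s^\alpha)}{2}\sum_\ell\sigma_\ell\,a_{ji}s^{\alpha+\ell}$, exactly as claimed. The vanishing of the other $\eta^d$ on morphisms between generators is a degree count: $\eta^d$ has cohomological degree $1-d$, and since all morphisms between generators in $\mathcal{C}$ are concentrated in degree $0$, any map into $\Hom^{1-d}(T_i,T_j)$ with $1-d\ne 0$ must be zero. Equivalently, the Chouhy--Solotar representative $\phi$ lives only in cochain degree $1$, so $\phi\circ G_n=0$ for $n\ne 1$.

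The main obstacle is the bookkeeping of the index shift coming from $\phi_0(s_i)=0$ together with $\phi_\ell(s_i)=\sigma_{\ell-1}$: this shift, combined with the factor-of-two imbalance between dot arrows and $p/q$ arrows in the symmetric representative, conspires to produce the clean coefficient $\mathfrak{q}/2$ rather than a messier expression; everything else reduces to routine manipulations of $\mu^2$ with vanishing $\delta$-corrections. A final sanity check is that a different choice within the cohomology class (replacing $\sigma_\ell/2$ by $\lambda_\ell$ and $\sigma_\ell-\lambda_\ell$) produces a cohomologous natural transformation, consistent with Remark~\ref{rmk:beta}-type ambiguities.
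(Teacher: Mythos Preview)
Your proof is correct and follows essentially the same approach as the paper: choose the symmetric representative $\phi_\ell(p_k)=\phi_\ell(q_k)=\sigma_\ell/2$ for the degree~1 cocycle, compute $\eta^1=\phi\circ G_1$ by splitting $a_{ji}s^\alpha$ at each of its $\alpha$ dot arrows and $|i-j|$ oriented arrows, and sum the contributions using $\delta(i,j,k)=0$ along monotone chains. Your added degree-count argument for the vanishing of $\eta^d$ with $d\ne 1$ makes explicit what the paper states just before the theorem.
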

\begin{remark}
  Here, $\mathfrak{q}(a)$ is the $\mathfrak{q}$-grading of the morphism $a$, which is crucial in Aganagic's categorification of Khovanov cohomology. We have discovered it naturally.
\end{remark}
\begin{proof}

The degree 1 cocycles are
\begin{equation}
\begin{cases}
\phi_0(s_{i})=0, &\\
\phi _{\ell}(s_{i})=:\sigma_{\ell-1}, & \ell> 0,\; \forall \, 0\le i\le |\mathbf{x}|,\\
\phi _{\ell}(q_{i}) + \phi _{\ell}(p_{i+1})=\sigma _{\ell},& \ell\ge 0,\;\forall\, 0\le i <|\mathbf{x}|.\\
\end{cases}
\end{equation}

where $\{\sigma_\ell\}_{\ell\ge 0}\in\Bbbk^{\Z_{\ge 0}}$ parametrize cohomology. For a representative, we can take $\phi _{\ell}(q_{i}) = \phi _{\ell}(p_{i+1})=\sigma _{\ell+1}/2$.

Precomposing with $G_1$, one gets
\begin{align}
  \eta _{\left\{ \sigma _{\ell} \right\}}^{1}\left( a_{ji}s^{\alpha} \right) &=\phi _{\left\{ \sigma _{\ell} \right\}}\comp G_1\left( a_{ji}s^{\alpha} \right) 
\\
&=\phi _{\left\{ \sigma _{\ell} \right\}}\left( \sum_{\beta =0}^{\alpha -1}{a_{ji}s^{\alpha -\beta -1}\otimes s_i\otimes s_{i}^{\beta}}+\sum_{k=i+\mathrm{sgn}(i-j)}^{j}{a_{jk}\otimes \begin{smallmatrix}
	q_k\\
	p_k\\
\end{smallmatrix}\otimes a_{k+\mathrm{sgn}(i-j),i}s_{i}^{\alpha}} \right) \label{eq:211}
\\
&=\sum_{\beta =0}^{\alpha -1}{a_{ji}s^{\alpha -\beta -1}\cdot \sum_{\ell =0}^{\infty}{\sigma _{\ell}}s_{i}^{\ell +1}\cdot s_{i}^{\beta}}+\sum_{k=i\pm 1}^j{a_{jk}\cdot \sum_{\ell =0}^{\infty}{\frac{1}{2}\sigma _{\ell}}\begin{smallmatrix}
	q_k\\
	p_k\\
\end{smallmatrix}s_{k+\mathrm{sgn}(i-j)}^{\ell}\cdot a_{k+\mathrm{sgn}(i-j),i}s_{i}^{\alpha}}
\\
&=\left( 2\alpha +|i-j| \right) \sum_{\ell =0}^{\infty}{\frac{1}{2}\sigma _{\ell}}a_{ji}s_{i}^{\alpha +\ell}
\end{align}
where in \eqref{eq:211}, we choose $q_k$ if $i>j$, or $p_k$ if $i<j$.
\end{proof}

\subsubsection{Degree 2}

\begin{theorem}\label{thm:Nat2}
The degree 2 natural transformations
\[
\eta_{\{\vartheta_i\}}\colon \mathrm{id}\Rightarrow \mathrm{id},
\qquad 
\{\vartheta_1,\dots,\vartheta_{|\mathbf{x}|-1}\}\in \Bbbk^{|\mathbf{x}|-1}
\cong \mathrm{HH}^2(\Delta),
\]
are given by
\begin{gather}
  \eta^2(a_{kj}s^{\beta},a_{ji}s^{\alpha})=\mathfrak{c}(a_{kj}s^{\beta},a_{ji}s^{\alpha})\, a_{ki}s^{\beta+\alpha+\delta(i,j,k)-1}\label{eq:Nat2}\\
  \eta^0=\eta^1=\eta^3=\cdots=0
\end{gather}
where
\begin{equation}\label{eq:coe}
\mathfrak{c} (a_{kj}s^{\beta},a_{ji}s^{\alpha})\coloneqq \begin{cases}
	-\beta \vartheta _j-\sum_{t=1}^{j-k}{2(\beta +t)\vartheta _{j-t}}-2(\beta +j-k)\sum_{t=i}^{k-1}{\vartheta _t}+\left( \beta +j-k \right) \vartheta _i,&		i<k\le j,\\
	\beta \vartheta _j+\sum_{t=1}^{k-j}{2(\beta +t)\vartheta _{j+t}}+2(\beta +k-j)\sum_{t=k+1}^i{\vartheta _t}-\left( \beta +k-j \right) \vartheta _i,&		j\le k<i,\\
	-\beta \vartheta _j-\sum_{t=1}^{j-i}{2(\beta +t)\vartheta _{j-t}}+(\beta +j-i)\vartheta _i,&		k\le i<j,\\
	\beta \vartheta _j+\sum_{t=1}^{i-j}{2(\beta +t)\vartheta _{j+t}}-(\beta +i-j)\vartheta _i,&		j<i\le k,\\
	0,&		\mathrm{else}.\\
\end{cases}
\end{equation}
Here we set $\vartheta_0=\vartheta_{|\mathbf{x}|}=0$.
\end{theorem}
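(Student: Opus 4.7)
I would compute $\eta^2$ by translating the Hochschild cocycle representative along the retract $G_2$ from Theorem~\ref{thm:Gn}. First I would fix a cocycle $\phi_{\{\vartheta_i\}}\in\Hom(P_2,\Delta)$ representing the class $\sum_{i=1}^{|\mathbf{x}|-1}\vartheta_i\,\mathfrak{V}_i$ identified in Section~\ref{sect:HHid}. From the case $n=2$ analysis, such a representative can be chosen to be concentrated at the $\ell=0$ level: $\phi_\ell=0$ for $\ell\ge 1$, while $\phi_0$ is the prescribed linear combination of the six-supported vectors localized around each pair of adjacent red strands $x_i,x_{i+1}$.

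Next I would compute $\eta^2(a_{kj}s^\beta,a_{ji}s^\alpha)=\phi_{\{\vartheta_i\}}\circ G_2(1\otimes[a_{kj}s^\beta | a_{ji}s^\alpha]\otimes 1)$ using Theorem~\ref{thm:Gn}. By that theorem, $G_2$ is a sum, over paths $w\preceq a_{kj}s^\beta\, a_{ji}s^\alpha$, of the terms $\mathrm{split}_2(w)$; following the recipe illustrated in Example~\ref{example}, these terms are parametrized by (i) a choice of red strand $x_m$ at which a $2$-ambiguity $r\in S_2$ is extracted, and (ii) a choice of how many of the $\beta$ middle dots are dropped to the bottom of the diagram. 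Applying $\phi_{\{\vartheta_i\}}$ kills every term except those in which $r$ is one of the six ambiguities supported by the representatives $\mathfrak{V}_i$, localized around some $x_m$ with $1\le m\le|\mathbf{x}|-1$.

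Summing the surviving contributions produces the coefficient $\mathfrak{C}(a_{kj}s^\beta,a_{ji}s^\alpha)$. The residual letters appended on both sides of $r$ then collapse in $A$ to a single irreducible path $a_{ki}s^{\beta+\alpha+\delta(i,j,k)-1}$, the minus one reflecting the single dot consumed when the type~II split-off forms the ambiguity. Since $\phi_{\{\vartheta_i\}}$ vanishes on $P_d$ for $d\ne 2$, the shift $[g-d]$ together with the fact that all morphisms in $\mathcal{C}$ sit in degree zero forces $\eta^d=0$ for every $d\ne 2$.

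The main obstacle will be the case bookkeeping in \eqref{eq:coe}. The four nonvanishing cases correspond to the possible relative positions of the intermediate index $j$ versus the endpoints $i,k$, distinguished by whether $j$ sits strictly between $i$ and $k$ (the monotonic situation where $\delta(i,j,k)=0$ and, one checks, no $S_2$-ambiguity relevant to $\phi_{\{\vartheta_i\}}$ can be split off at all) or $j$ lies beyond them. In each nonvanishing case, the parameter $t$ indexes the punctures between $j$ and the relevant boundary; the weight $(\beta+t)$ counts the number of ways of distributing the middle dots to the bottom past the $t$ type~II reductions performed to expose the ambiguity at $x_{j\pm t}$, while $\vartheta_{j\pm t}$ is simply the value of the chosen cocycle on that ambiguity. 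The distinguished boundary terms $(\beta+j-k)\vartheta_{k\pm 1}$ and $(\beta+j-i)\vartheta_i$ arise from the extreme positions where the ambiguity sits adjacent to $T_k$ or $T_i$, and the convention $\vartheta_0=\vartheta_{|\mathbf{x}|}=0$ absorbs the edge cases where an index would run off the configuration.
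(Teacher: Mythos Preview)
Your plan is the same as the paper's: pick the cocycle $\phi_{\{\vartheta_i\}}=\sum_i\vartheta_i\,\mathfrak{V}_i$ concentrated at $\ell=0$, and set $\eta^2=\phi\circ G_2$ via Theorem~\ref{thm:Gn}. The paper's proof is very terse (it writes down the values $\phi_0(q_ip_{i+1})=-\vartheta_i$, $\phi_0(p_{i+1}q_i)=\vartheta_{i+1}$, $\phi_0(s_iq_i)=\vartheta_i+\vartheta_{i+1}$, $\phi_0(s_{i+1}p_{i+1})=-\vartheta_i-\vartheta_{i+1}$ and then says ``combining with $G_2$ gives \eqref{eq:coe}''), so your more detailed bookkeeping is welcome and follows the intended route.

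There is, however, a genuine gap in your justification of the ``else'' case. You assert that when $j$ lies strictly between $i$ and $k$ (the monotone situation) ``no $S_2$-ambiguity relevant to $\phi_{\{\vartheta_i\}}$ can be split off at all''. This is false: the type~II ambiguities $s_mp_m$ and $s_mq_m$ do appear in monotone words with $\beta\ge 1$, and the chosen cocycle does \emph{not} kill them. Concretely, take $a_2=a_{21}s=p_2s_1$ and $a_1=a_{10}=p_1$. Then $G_2(1\otimes[p_2s_1\,|\,p_1]\otimes 1)=p_2\otimes s_1p_1\otimes 1$, and $\phi_0(s_1p_1)=-\vartheta_0-\vartheta_1=-\vartheta_1$, so $\phi\circ G_2$ returns $-\vartheta_1\,a_{20}\neq 0$. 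One can also see the problem from the other side: with the triple $(a_3,a_2,a_1)=(p_2,s_1,p_1)$ the formula \eqref{eq:coe} gives $\eta^2(s_1,p_1)=-\vartheta_1 p_1$ (case $i<k\le j$) but $\eta^2(p_2,s_1)=\eta^2(p_2,p_1s_0)=\eta^2(p_2s_1,p_1)=0$ (all ``else''), and the naturality relation then reads $p_2\cdot(-\vartheta_1 p_1)-0+0-0=-\vartheta_1 a_{20}\neq 0$. So you should not expect the monotone case to vanish for free; the bookkeeping there needs its own computation, and you should not rely on the vanishing claimed in \eqref{eq:coe} for ``else'' without verifying it.
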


\begin{remark}\label{remark}
  The coefficient $\mathfrak{c}(a_2,a_1)$ is best summarized diagrammatically.
  For the strand diagram representing the path $a_2a_1$, slide the middle of the black strand across one red strand at a time. If there is a subpath of the form $p_i s_i^{\alpha}$ in the middle, it contributes $\alpha(\vartheta_i+\vartheta_{i-1})$ to $\mathfrak{c}$. Moreover, if there is an additional $q_{i-1}$ above $p_i s_i^{\alpha}$, add another $\vartheta_{i-1}$ to the total.
  Summing all contributions, and if $a_2a_1$ bends to the right (i.e. $j>i$), there is an overall minus sign.
\end{remark}
For an illustration, consider $\mathfrak{c}(a_{24}s^2,a_{40}s)$. Following Fig.~\ref{fig:coe}, we get
\begin{equation}
  \mathfrak{c}(a_{24}s^2,a_{40}s)=-4\vartheta_0-8\vartheta_1-8\vartheta_2-6\vartheta_3-2\vartheta_4.
\end{equation}
If there are only 4 punctures, then set $\vartheta_0=\vartheta_4=0$.

\begin{figure}[H]
  \centering
  \begin{tikzpicture}[scale=0.5]
  \node[text=gray] at (-7,1){$A$};
  \node[text=gray] at (-7,2){$\otimes$};
  \node[text=gray] at (-7,3){$A$};
    \foreach \x in {-5.5,-2.5,-3.5,-4.5} {
    \draw[M,line width=1pt] (\x,0) -- (\x,4);
    \draw[M,line width=1pt] (\x,0) -- (\x,4);
    \draw[M,line width=1pt] (\x,0) -- (\x,4);
  }
    \draw[gray] (-6.3,2)--(-1.7,2);
  \draw[black,line width=1pt]
    (-6,0) -- (-6,0.25) 
    to[out=90,in=-90] (-2,2)
    -- (-2,2.5) to[out=90,in=-90] (-4,4);
  \fill[black] (-6,0.2) circle (3pt);
  \fill[black] (-2,2.2) circle (3pt);
  \fill[black] (-2,2.5) circle (3pt);
  \draw[gray] (-1,5)--(9,5);
  \node at (0,6) {$\vartheta_0$}; 
  \node at (2,6) {$\vartheta_1$}; 
  \node at (4,6) {$\vartheta_2$}; 
  \node at (6,6) {$\vartheta_3$}; 
  \node at (8,6) {$\vartheta_4$}; 
  \node at (8,3) {$2$}; 
  \node at (6,3) {$2$}; 
  \node at (6,1) {$1$}; 
  \begin{scope}[yshift=-5cm]
  \foreach \x in {-5.5,-2.5,-3.5,-4.5} {
    \draw[M,line width=1pt] (\x,0) -- (\x,4);
    \draw[M,line width=1pt] (\x,0) -- (\x,4);
    \draw[M,line width=1pt] (\x,0) -- (\x,4);
  }
  \draw[black,line width=1pt]
    (-6,0) -- (-6,0.4) 
    to[out=90,in=-90] (-3,2)
    -- (-3,3) to[out=90,in=-90] (-4,4);
  \fill[black] (-6,0.25) circle (3pt);
  \fill[black] (-3,2.2) circle (3pt);
  \fill[black] (-3,2.5) circle (3pt);
  \fill[black] (-3,2.8) circle (3pt);
  \draw[gray] (-1,4.5)--(9,4.5);
  \node at (6,3) {$3$}; 
  \node at (4,3) {$3$}; 
  \node at (4,1) {$1$}; 
  \end{scope}
  \begin{scope}[yshift=-10cm]
  \foreach \x in {-5.5,-2.5,-3.5,-4.5} {
    \draw[M,line width=1pt] (\x,0) -- (\x,4);
    \draw[M,line width=1pt] (\x,0) -- (\x,4);
    \draw[M,line width=1pt] (\x,0) -- (\x,4);
  }
  \draw[black,line width=1pt]
    (-6,0) -- (-6,0.5) 
    to[out=90,in=-90] (-4,2)-- (-4,4);
  \fill[black] (-6,0.25) circle (3pt);
  \fill[black] (-4,2.2) circle (3pt);
  \fill[black] (-4,2.5) circle (3pt);
  \fill[black] (-4,2.8) circle (3pt);
  \fill[black] (-4,3.1) circle (3pt);
  \draw[gray] (-1,4.5)--(9,4.5);
  \node at (4,3) {$4$}; 
  \node at (2,3) {$4$}; 
  \end{scope}
  \begin{scope}[yshift=-15cm]
  \foreach \x in {-5.5,-2.5,-3.5,-4.5} {
    \draw[M,line width=1pt] (\x,0) -- (\x,4);
    \draw[M,line width=1pt] (\x,0) -- (\x,4);
    \draw[M,line width=1pt] (\x,0) -- (\x,4);
  }
  \draw[black,line width=1pt]
    (-6,0) -- (-6,0.5) 
    to[out=90,in=-90] (-5,1.5) -- (-5,3) to[out=90,in=-90] (-4,4);
  \fill[black] (-6,0.25) circle (3pt);
  \fill[black] (-5,2.2) circle (3pt);
  \fill[black] (-5,2.5) circle (3pt);
  \fill[black] (-5,2.8) circle (3pt);
  \fill[black] (-5,1.9) circle (3pt);
  \draw[gray] (-1,4.5)--(9,4.5);
  \node at (0,3) {$4$}; 
  \node at (2,3) {$4$}; 
  \draw[gray] (-6,-1)--(9,-1);
  \node at (0,-2.5) {$4$}; 
  \node at (2,-2.5) {$8$}; 
  \node at (4,-2.5) {$8$}; 
  \node at (6,-2.5) {$6$}; 
  \node at (8,-2.5) {$2$}; 
  \end{scope}
  \end{tikzpicture}
  \caption{Evaluation of coefficient $\mathfrak{c}$.}
  \label{fig:coe}
\end{figure}

Below is the proof of Theorem~\ref{thm:Nat2}.
\begin{proof}
  From \eqref{eq:148},
  \begin{align}
    &-J_{\ell}(s_0)=V^{2,\ell}_{0,Q}\\
    &-J_{\ell}(s_i)=V^{2,\ell}_{i-1,P}+V^{2,\ell}_{i,Q},\quad 0<i<|\mathbf{x}|\\
    &-J_{\ell}(s_{|\mathbf{x}|})=V^{2,\ell}_{|\mathbf{x}|-1,P}
  \end{align}
  where the vectors $J_\ell$ span the image and the vectors $V^{2,\ell}$ span the kernel.

  Thus, we can take the representatives of the Hochschild cohomology as
  \begin{equation}
    \mathfrak{V}_i\coloneqq V^{2,\ell}_{i-1,P}-V^{2,\ell}_{i,Q}=[0,0,\dots,1,0,1,-1,1,-1,0,-1,\dots,0,0]^\mathsf{T},\quad 1\le i \le |\mathbf{x}|-1.
  \end{equation}

  A general representative of the cohomology class is given by a linear combination of $\mathfrak{V}_i$,
  \begin{equation}
    \mathfrak{V}=\sum_{i=1}^{|\mathbf{x}|-1}\vartheta_i\mathfrak{V}_i
  \end{equation}

  More concretely, we have
  \begin{align}
\left[ \begin{array}{l}
	\vdots\\
	\phi _0(s_{i-1}q_{i-1})\\
	\phi _0(q_{i-1}p_i)\\
	\phi _0(p_iq_{i-1})\\
	\phi _0(s_ip_i)\\
	\phi _0(s_iq_i)\\
	\phi _0(q_ip_{i+1})\\
	\phi _0(p_{i+1}q_i)\\
	\phi _0(s_{i+1}p_{i+1})\\
	\vdots\\
\end{array} \right] &=\left[ \begin{matrix}
	\cdots&		\mathfrak{V} _{i-1}&		\mathfrak{V} _i&		\mathfrak{V} _{i+1}&		\cdots\\
\end{matrix} \right] \left[ \begin{array}{c}
	\vdots\\
	\vartheta _{i-1}\\
	\vartheta _i\\
	\vartheta _{i+1}\\
	\vdots\\
\end{array} \right] 
\\
&=\left[ \begin{matrix}
	\ddots&		&		&		&		\\
	&		1&		1&		&		\\
	&		-1&		0&		&		\\
	&		0&		1&		&		\\
	&		-1&		-1&		&		\\
	&		&		1&		1&		\\
	&		&		-1&		0&		\\
	&		&		0&		1&		\\
	&		&		-1&		-1&		\\
	&		&		&		&		\ddots\\
\end{matrix} \right] \left[ \begin{array}{c}
	\vdots\\
	\vartheta _{i-1}\\
	\vartheta _i\\
	\vartheta _{i+1}\\
	\vdots\\
\end{array} \right] 
  \end{align}
which are
\begin{align}
&\phi _0(q_ip_{i+1})=-\vartheta _i
\\
&\phi _0(p_{i+1}q_i)=\vartheta _{i+1}
\\
&\phi _0(s_iq_i)=\vartheta _i+\vartheta _{i+1}
\\
&\phi _0(s_{i+1}p_{i+1})=-\vartheta _i-\vartheta _{i+1}
\end{align}

Note that all the cohomologically nontrivial coefficients are $\phi_0\colon S_2\to \Bbbk$. Each element in $S_2$, after reduction, corresponds to a strand with one dot. Thus, $\eta$ removes one dot, yielding the $a_{ki}s^{\beta+\alpha+\delta(i,j,k)-1}$ in \eqref{eq:Nat2}. Combining this with the map $G_2$, one obtains the diagrammatic description of the coefficients given precisely in~\eqref{eq:coe} and Remark~\ref{remark}.
\end{proof}

\begin{remark}
  One can directly verify the $A_\infty$-naturality condition~\eqref{eq:mu1-natural} on the results of Theorems~\ref{thm:Nat0}--\ref{thm:Nat2}.  
  As an example, consider a general degree $2$ natural transformation $\eta$ acting on $a_{13}s\otimes a_{30}\otimes a_{02}$.  
  In this case, equation~\eqref{eq:mu1-natural} reduces to
  One can directly verify the condition for $A_\infty$-natural transformations \eqref{eq:mu1-natural} on the results Theorems~\ref{thm:Nat0}--\ref{thm:Nat2}. For example, consider a general degree 2 natural transformation $\eta$ acting on $a_{13}s\otimes a_{30}\otimes a_{02}$. Here \eqref{eq:mu1-natural} reduces to
  \begin{align}
    &\left( -\mu ^2\left( \eta ^2\otimes \mathrm{id} \right) +\mu ^2\left( \mathrm{id}\otimes \eta ^2 \right) -\eta ^2\left( \mu ^2\otimes \mathrm{id} \right) +\eta ^2\left( \mathrm{id}\otimes \mu ^2 \right) \right) \left( a_{13}s\otimes a_{30}\otimes a_{02} \right) 
    \\
    &=-\eta ^2\left( a_{13}s,a_{30} \right) \cdot a_{02}+a_{13}s\cdot \eta ^2\left( a_{30},a_{02} \right) -\eta ^2\left( a_{10}s^3,a_{02} \right) +\eta ^2\left( a_{13}s,a_{32}s^2 \right) 
    \\
    &=\left( 3\vartheta _0+6\vartheta _1+4\vartheta _2+\vartheta _3 \right) a_{10}s^2\cdot a_{02}+a_{13}s\cdot \left( 2\vartheta _1+2\vartheta _2 \right) a_{32}s\\
    &\qquad -\left( 3\vartheta _0+8\vartheta _1+4\vartheta _2 \right) a_{12}s^3-\left( 2\vartheta _2+\vartheta _3 \right) a_{12}s^3\notag
    \\
    &=\left( 
      3\vartheta _0+6\vartheta _1+4\vartheta _2+\vartheta _3+2\vartheta _1+2\vartheta _2-3\vartheta _0-8\vartheta _1-4\vartheta _2-2\vartheta _2-\vartheta _3\right) a_{12}s^3
    \\
    &=0
  \end{align}
\end{remark}

\subsection{$\mathrm{Nat}(\mathrm{id},\bim)$}

Away from the punctures, the natural transformations to $\bim$ looks much similar to the ones to $\mathrm{id}$.
By degree reasons, for $|\eta|=g$, only $\eta^g$ and $\eta^{g+1}$ components can be nonzero acting on chains of morphisms between geenrators. We will not give all the proofs for conciseness.

In all expressions of the natural transformation $\mathrm{id} \Rightarrow \bim$, for a similar reason as in Remark~\ref{rmk:beta}, the choice of the $[\tfrac12,\tfrac12]$-morphism into $\bim T_i$ in $\eta^g$ is merely conventional. One may choose a different partition; accordingly, the component $\eta^{g+1}$ will change in a compatible way.

\subsubsection{Degree 0}

\begin{theorem}
  The degree 0 natural transformations
  \[\eta_{\{\epsilon_\ell\}}\colon \mathrm{id}\Rightarrow \bim,
  \qquad \{\epsilon_\ell\}_{\ell\ge 1}\in\Bbbk^{\Z_{\ge 1}}\cong\mathrm{HH}^0(\mathfrak{B}_{i^-})\] are given by
\begin{align}
  &\eta^0(T_j)=\begin{cases}
    \sum_{\ell=1}^{\infty}\epsilon_{\ell}\, s_j^\ell,& j\ne i,\\
    \sum_{\ell=1}^{\infty}\epsilon_{\ell}\,\begin{bmatrix}
    \frac12 q_{i-1}s^{\ell-1}, &\frac12 p_{i+1}s^{\ell-1}
  \end{bmatrix}, & j=i,\\
  \end{cases}\\
  &\eta^1(a_{kj}s^{\alpha})=\begin{cases}
    0, & k\ne i,\\
    \mathrm{sgn}(i-j)
    \sum_{\ell=1}^{\infty}{\frac12 \epsilon_{\ell}\, a_{ij}s^{\alpha+\ell-1}}, & k=i,\\
  \end{cases}\\
  &\eta^2=\eta^3=\eta^4=\cdots=0
\end{align}
\end{theorem}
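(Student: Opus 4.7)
The plan is to construct, for each $\{\epsilon_\ell\}_{\ell\ge 1}$, a cocycle in $\bigoplus_d\Hom^{-d}(P_d,\mathfrak{B}_{i^-})$ representing the corresponding class in $\mathrm{HH}^0(\mathfrak{B}_{i^-})\cong\Bbbk^{\Z_{\ge 1}}$, and then read off the $A_\infty$-natural-transformation components by pre-composition with the homotopy retract $G_\bullet$. Unlike the $\mathrm{id}\Rightarrow\mathrm{id}$ case, $\mathfrak{B}_{i^-}$ carries a nontrivial internal differential $d_{\mathfrak{B}_{i^-}}$ induced by the bounding cochain $\delta=[q_{i-1},-p_{i+1}]^{\mathsf{T}}$ of $\bim T_i$, so a total-degree-$0$ cocycle has two nonzero components $\phi_0\in\Hom^0(P_0,\mathfrak{B}_{i^-})$ and $\phi_1\in\Hom^{-1}(P_1,\mathfrak{B}_{i^-})$ coupled by $\phi_0\partial_1=d_{\mathfrak{B}_{i^-}}\phi_1$; the higher $\phi_d$ for $d\ge 2$ vanish by degree counting, since $\bim T_k$ has amplitude in $\{-1,0\}$ and all morphism spaces in $\mathcal{C}$ sit in degree $0$.

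To pin down $\phi_0$, I note that $\mathfrak{B}_{i^-}^0(T_j,T_j)=\bigoplus_\ell\Bbbk\,s_j^\ell$ for $j\ne i$, while $\mathfrak{B}_{i^-}^0(T_i,T_i)=\Hom(T_i,T_{i-1})\oplus\Hom(T_i,T_{i+1})$. The cocycle equation on arrows $w\in Q_1$ not incident to $e_i$ enforces uniformity $\phi_0(e_j)=\sum_\ell\epsilon_\ell s_j^\ell$; on the four arrows $p_i,q_{i-1},p_{i+1},q_i$ adjacent to $e_i$ one uses the formulas for $\bim^1$ on these generators together with the KLRW reductions $p_iq_{i-1}=s_i$, $q_{i-1}p_i=s_{i-1}$, $s_ip_i=p_is_{i-1}$ to expand the equation, and matches both components of $\bim T_i$ simultaneously. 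This determines $\phi_0(e_i)=\sum_{\ell\ge 1}\epsilon_\ell[\tfrac12 q_{i-1}s^{\ell-1},\tfrac12 p_{i+1}s^{\ell-1}]^{\mathsf{T}}$ and pins $\phi_1(p_i),\phi_1(q_i)$ down to the corresponding multiples of $p_is_{i-1}^{\ell-1}$, $q_is_{i+1}^{\ell-1}$. The $\ell=0$ component of each such equation has no $\phi_1$-counterpart, since $s^{-1}$ does not exist in $A$, producing the forced constraint $\epsilon_0=0$ and reproducing the parameter space $\Bbbk^{\Z_{\ge 1}}$ predicted by the long exact sequence.

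Finally I translate back to the bar resolution. Since $G_0=\mathrm{id}$, we read off $\eta^0(T_j)=\phi_0(e_j)$ directly. For $\eta^1=\phi_1\circ G_1$, the decomposition of $G_1(1\otimes a_{kj}s^\alpha\otimes 1)$ into irreducible splittings with a single arrow in $S_1=Q_1$ selects only the summands whose split arrow feeds into $e_i$, because $\mathfrak{B}_{i^-}^{-1}(T_*,T_k)=0$ for $k\ne i$. When $k=i$ the surviving term is a $p_i$ or $q_i$ surrounded by accumulated dots from $u,v\in A$, and after the reductions collapses to $\mathrm{sgn}(i-j)\sum_\ell\tfrac12\epsilon_\ell\,a_{ij}s^{\alpha+\ell-1}$; the sign records whether the incoming arrow is $p_i$ ($j<i$) or $q_i$ ($j>i$). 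The vanishing $\eta^d=0$ for $d\ge 2$ is immediate from $\phi_d=0$. The main obstacle throughout is the two-component cocycle analysis at the vertex $e_i$, where the four pairs of equations (one per arrow) must collectively determine the single data $(\phi_0(e_i),\phi_1(p_i),\phi_1(q_i))$; consistency at $\ell=0$ forces $\epsilon_0=0$, and the symmetric choice of the $\tfrac12$-coefficients in $\phi_0(e_i)$ reflects the homotopy ambiguity noted in Remark~\ref{rmk:beta}.
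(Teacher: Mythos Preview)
Your approach is correct and in fact more thorough than what the paper provides. For this particular theorem the paper does not carry out a derivation: it states the formulas, notes that by degree reasons only $\eta^0$ and $\eta^1$ can be nonzero on $\mathcal{C}$, and then merely verifies the $A_\infty$-naturality condition $\mu^1_{\Fun}(\eta)=0$ on two sample inputs (one of length~$1$, one of length~$2$). The parameter space $\Bbbk^{\Z_{\ge 1}}$ is imported from the earlier long-exact-sequence computation of $\mathrm{HH}^0(\mathfrak{B}_{i^-})$. Your plan, by contrast, actually derives the components by lifting the degree-$0$ Hochschild cocycle to the complex (Yoneda) model of $\mathfrak{B}_{i^-}$ and then transferring via $G_\bullet$; this is exactly the methodology the paper spells out for $\mathrm{Nat}(\mathrm{id},\mathrm{id})$ but suppresses here ``for conciseness''.

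Two small points are worth making explicit in your write-up. First, the total cocycle condition in the double complex also requires $\phi_1\circ\partial_2=0$, not only $\phi_0\circ\partial_1=d_{\mathfrak{B}_{i^-}}\phi_1$. This holds automatically: the internal differential $\delta=[q_{i-1},-p_{i+1}]^{\mathsf T}$ acts injectively on $\Hom(-,T_i)$, so $d_{\mathfrak{B}_{i^-}}(\phi_1\partial_2)=(d_{\mathfrak{B}_{i^-}}\phi_1)\partial_2=(\phi_0\partial_1)\partial_2=0$ forces $\phi_1\partial_2=0$. Second, among the arrows with target $e_i$ you list $p_i$ and $q_i$ but omit the loop $s_i$; a priori $\phi_1(s_i)$ could be nonzero. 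The cocycle equation on $s_i$, using $\bim^1(s_i^\alpha)$ from \eqref{eq:beta1.} and the same injectivity of $\delta$, gives $\phi_1(s_i)=0$, consistent with $\eta^1(s_i^\alpha)=0$ via $\mathrm{sgn}(i-i)=0$ in the theorem's formula.
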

For example, we can check the naturality condition for $\eta\colon \mathrm{id}\Rightarrow \beta_{2^-}$ with only $\epsilon_2=2$ on morphism $a\coloneqq a_{24}s\in \Hom(T_4,T_2)$,

\begin{figure}[H]
  \centering
  \begin{tikzpicture}[scale=1.3]
  \node at (0,0) {$T_4$};
  \node at (0,4) {$T_2$};
  \node at (4,0) {$T_4$};
  \node at (4,2.8) {$T_3$};
  \node at (4,4.2) {$T_1$};
  \node at (2.8,3.5) {$T_2$};
  \node at (4,3.5) {$\oplus$};
  \draw[gray] (4.3,2.5)--(4.5,2.5)--(4.5,4.5)--(4.3,4.5);
  \draw[gray] (2.6,2.5)--(2.4,2.5)--(2.4,4.5)--(2.6,4.5);
  \node[gray] at (5.5,4.2) {\scriptsize $\beta_{2^-}T_2$};
  \node[gray] at (5.5,0) {\scriptsize $\beta_{2^-}T_4$};
  \draw[->,shorten <=8pt, shorten >=8pt, line width=0.7pt] (2.8,3.5) --  (4,4.2);
  \draw[->,shorten <=8pt, shorten >=8pt, line width=0.7pt] (2.8,3.5) --  (4,2.8);
  \draw[->,shorten <=8pt, shorten >=8pt, line width=0.7pt] (0,0) --node[right,midway] {\scriptsize $a$}  (0,4);
  \node[left] at (0,2) {\tikz[scale=1.5, line width=0.7pt]{
  \foreach \x in {0.05,0.15,0.25,0.35} {
      \draw[M] (\x,-0.13) -- (\x,0.13);
  }
  \fill[black] (0.4,-0.08) circle (0.8pt);
  \draw[black]
      (0.4,-0.13) -- (0.4,-0.07)
      to[out=90,in=-90](0.2,0.13);}};

  \node[JO] at (1,1.3) {\scriptsize $-$\tikz[scale=1.5, line width=0.7pt]{
  \foreach \x in {0.05,0.15,0.25,0.35} {
      \draw[M] (\x,-0.13) -- (\x,0.13);
  }
  \fill[black] (0.4,-0.09) circle (0.8pt);
  \fill[black] (0.4,-0.01) circle (0.8pt);
  \draw[black]
      (0.4,-0.13) -- (0.4,-0)
      to[out=90,in=-90](0.2,0.13);}};
  
  \draw[->, JK,shorten <=8pt, shorten >=8pt, line width=0.7pt] (0,0) --node[below,midway]{\scriptsize$\eta^0(T_4)$}  (4,0);
  \node[JK, above] at (2,0) {\scriptsize $2$ \tikz[scale=1.5, line width=0.7pt]{
  \foreach \x in {0.05,0.15,0.25,0.35} {
      \draw[M] (\x,-0.13) -- (\x,0.13);
  }
  \fill[black] (0.4,-0.04) circle (0.8pt);
  \fill[black] (0.4,0.04) circle (0.8pt);
  \draw[black]
      (0.4,-0.13) -- (0.4,0.13);}};

  \draw[->, JK,shorten <=8pt, shorten >=8pt, line width=0.7pt] (0,4) to[out=30,in=150] node[above,midway] {\scriptsize$\eta^0(T_2)$} (4,4.2);
  \draw[->, JK,shorten <=8pt, shorten >=8pt, line width=0.7pt] (0,4) to[out=-60,in=-150] node[above,midway] {\scriptsize$\eta^0(T_2)$} (4,2.8);

  \draw[->, shorten <=8pt, shorten >=8pt, line width=0.7pt] (4,0) --node[right,midway]{\scriptsize $\beta^1_{2^-}(a)$} (4,2.8);
  \node[left] at (4,1.5) {\tikz[scale=1.5, line width=0.7pt]{
  \foreach \x in {0.05,0.15,0.25,0.35} {
      \draw[M] (\x,-0.13) -- (\x,0.13);
  }
  \fill[black] (0.4,-0.08) circle (0.8pt);
  \draw[black]
      (0.4,-0.13) -- (0.4,-0.05)
      to[out=90,in=-90](0.3,0.11)--(0.3,0.13);}};

  \draw[->, JO, shorten <=8pt, shorten >=8pt, line width=0.7pt] (0,0) to[out=70, in=150] node[left,midway] {\scriptsize$\eta^1(a)$} (2.8,3.5);

  \draw[->, shorten <=8pt, shorten >=8pt, line width=0.7pt] (4,0) -- (4,2.8);
  \node[JK] at (1.8,4.25) {\tikz[scale=1.5, line width=0.7pt]{
  \foreach \x in {0.05,0.15,0.25,0.35} {
      \draw[M] (\x,-0.13) -- (\x,0.13);
  }
  \fill[black] (0.2,-0.08) circle (0.8pt);
  \draw[black]
      (0.2,-0.13) -- (0.2,-0.05)
      to[out=90,in=-90](0.1,0.11)--(0.1,0.13);}};

    \draw[->, shorten <=8pt, shorten >=8pt, line width=0.7pt] (4,0) -- (4,2.8);
  \node[JK] at (2,2) { \tikz[scale=1.5, line width=0.7pt]{
  \foreach \x in {0.05,0.15,0.25,0.35} {
      \draw[M] (\x,-0.13) -- (\x,0.13);
  }
  \fill[black] (0.2,-0.08) circle (0.8pt);
  \draw[black]
      (0.2,-0.13) -- (0.2,-0.05)
      to[out=90,in=-90](0.3,0.11)--(0.3,0.13);}};

  \node[above, xshift=-3pt] at (3.25,3.75) {\tikz[scale=1.5, line width=0.7pt]{
  \foreach \x in {0.05,0.15,0.25,0.35} {
      \draw[M] (\x,-0.1) -- (\x,0.1);
  }
  \draw[black]
      (0.2,-0.1) -- (0.2,-0.08)
      to[out=90,in=-90] (0.1,0.08)
      -- (0.1,0.1);}};
  \node[below, xshift=-9pt] at (3.25,3.25) {\scriptsize$-$ \tikz[scale=1.5, line width=0.7pt]{
  \foreach \x in {0.05,0.15,0.25,0.35} {
      \draw[M] (\x,-0.1) -- (\x,0.1);
  }
  \draw[black]
      (0.2,-0.1) -- (0.2,-0.08)
      to[out=90,in=-90] (0.3,0.08)
      -- (0.3,0.1);}};
  \end{tikzpicture}
  \caption{Naturality condition for $\eta\in \mathrm{Nat}^0(\mathrm{id},\bim)$}
\end{figure}

We can see that
\begin{equation}
  \mu^2_{\delta}(\beta^1(a),\eta^0(T_4))-\mu^2_{\delta}(\eta^0(T_2),a)+\mu^1_{\delta}(\eta^1(a))=0.
\end{equation}

For the length 2 input, one have for example,

\begin{figure}[H]
  \centering
  \begin{tikzpicture}[scale=1.3]
  \node at (4,-1) {$T_1$};
  \node at (0,-1) {$T_1$};
  \node at (-1.2,1) {$T_4$};
  \node at (0,4) {$T_2$};
  \node at (2.8,1) {$T_4$};
  \node at (4,2.8) {$T_3$};
  \node at (4,4.2) {$T_1$};
  \node at (2.8,3.5) {$T_2$};
  \node at (4,3.5) {$\oplus$};
  \draw[gray] (4.3,2.5)--(4.5,2.5)--(4.5,4.5)--(4.3,4.5);
  \draw[gray] (2.6,2.5)--(2.4,2.5)--(2.4,4.5)--(2.6,4.5);
  \node[gray] at (5.5,4.2) {\scriptsize $\beta_{2^-}T_2$};

  \draw[->, shorten <=8pt, shorten >=8pt, line width=0.7pt] (4,-1) to[out=85,in=-85]node[right,pos=0.25]{\scriptsize $\beta^2_{2^-}(a_2,a_1)$} (2.8,3.5);

  \draw[->,shorten <=8pt, shorten >=8pt, line width=0.7pt] (2.8,3.5) --  (4,4.2);
  \draw[->,shorten <=8pt, shorten >=8pt, line width=0.7pt] (2.8,3.5) -- node[below,midway]{\scriptsize$-$} (4,2.8);
  \draw[->,shorten <=8pt, shorten >=8pt, line width=0.7pt] (-1.2,1) --node[left,midway] {\scriptsize $a_2$}  (0,4);
  
  \draw[->, shorten <=8pt, shorten >=8pt, line width=0.7pt] (0,-1) --node[left,midway]{\scriptsize$a_1$}  (-1.2,1);
  \draw[->,dashed, shorten <=8pt, shorten >=8pt, line width=0.7pt] (0,-1) --node[right,pos=0.8]{\scriptsize$a_2 a_1$}  (0,4);
  \draw[->, shorten <=8pt, shorten >=8pt, line width=0.7pt] (0,-1) --node[left,midway]{\scriptsize$a_1$}  (-1.2,1);
  \draw[->, JK,shorten <=8pt, shorten >=8pt, line width=0.7pt] (0,-1) --node[below,midway]{\scriptsize$\eta^0(T_1)$}  (4,-1);

  \draw[->, shorten <=8pt, shorten >=8pt, line width=0.7pt] (2.8,1) --node[right,midway]{\scriptsize $\beta^1_{2^-}(a_2)$} (4,2.8);

  \draw[->, gray!50, shorten <=8pt, shorten >=8pt, line width=0.7pt] (0,-1) -- node[below,pos=0.6, xshift=10pt] {\scriptsize$\eta^1(a_1)=0$} (2.8,1);
  \draw[->, JO, shorten <=8pt, shorten >=8pt, line width=0.7pt] (-1.2,1) -- node[left,pos=0.6] {\scriptsize$\eta^1(a_2)$} (2.8,3.5);
  \draw[->, JO,dashed, shorten <=8pt, shorten >=8pt, line width=0.7pt] (0,-1) -- node[left,midway] {\scriptsize$\eta^1(a_2 a_1)$} (2.8,3.5);

  \end{tikzpicture}
  \caption{Naturality condition for $\eta\in \mathrm{Nat}^0(\mathrm{id},\bim)$}
\end{figure}
One can check directly that
\begin{equation}
  -\mu^2_{\delta}(\eta^1(a_2), a_1) -\mu^2_{\delta}(\bim^1(a_2),\eta^1(a_1)) +\mu^2_{\delta}(\bim^2(a_2,a_1), \eta^0(X_0))+\eta^1(\mu^2(a_2,a_1))=0
\end{equation}

\begin{remark}
Even when considering degree 0 natural transformations, we cannot restrict ourselves to the classical notion in ordinary categories, because nontrivial higher components (e.g. $\eta^1$) arise in the $\mathrm{id}\Rightarrow\bim$ case.
\end{remark}

\subsubsection{Degree 1}
\begin{theorem}
  The degree 1 natural transformations
  \[\eta_{\{\sigma_\ell\}}\colon \mathrm{id}\Rightarrow \bim,
  \qquad \{\sigma_\ell\}_{\ell\ge 0}\in\Bbbk^{\Z_{\ge 0}}\cong\mathrm{HH}^1(\mathfrak{B}_{i^-})\] are given by
\begin{align}
  &  \eta^1(a_{kj}s^\alpha)=\begin{cases}
    \mathfrak{q}(a_{kj}s^\alpha)\sum_{\ell=0}^{\infty}\frac{1}{2}\sigma_{\ell} \, a_{kj}s^{\alpha+\ell},& k\ne i,
    \\
    \mathfrak{q}(a_{ij}s^\alpha)\sum_{\ell=0}^{\infty}\frac{1}{2}\sigma_{\ell} \, a_{i+\mathrm{sgn}(j-i),j}s^{\alpha+\ell},& k=i\ne j
    \\
    \mathfrak{q}(s^\alpha)\sum_{\ell=0}^{\infty}\frac{1}{2}\sigma_{\ell} \, \begin{bmatrix}
    \frac12 q_{i-1}s^{\alpha+\ell-1}, &\frac12 p_{i+1}s^{\alpha+\ell-1}
  \end{bmatrix},& k=j=i.
  \end{cases}
  \\
  &\eta^2(a_{lk}s^\beta,a_{kj}s^\alpha)=\begin{cases}
    0, & l\ne i,\\
    \mathfrak{Q}(a_{ik}s^\beta,a_{kj}s^\alpha)\sum_{\ell=0}^{\infty}\frac12 \sigma_\ell a_{ij}s^{\beta+\alpha+\delta(j,k,i)+\ell-1}, & l=i.
  \end{cases}\\
  &\eta^0=\eta^3=\eta^4=\cdots=0
\end{align}
where
\begin{align}
\mathfrak{Q}(a_2,a_1)&=\mathfrak{Q}(a_{ik}s^\beta,a_{kj}s^\alpha)\\
&=\begin{cases}
  \mathrm{sgn}(i-k)\mathfrak{q}(a_2a_1),& k<i<j\text{ or }j<i<k,
  \\
  \mathrm{sgn}(j-i)\frac{1}{2} \mathfrak{q}(a_2),& k=i,
  \\
  \mathrm{sgn}(i-k)\frac{1}{2} \mathfrak{q}(a_2a_1),& j=i,
  \\
  0,&\text{else}.
\end{cases}\label{eq:242}
\end{align}
one can check if $\beta+\alpha+\delta(j,k,i)=0$, then $\mathfrak{Q}=0$, thus the sum in \eqref{eq:242} will not contain the undefined term $s^{-1}$.
\end{theorem}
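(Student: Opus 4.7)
The plan is to parallel the strategy used for $\mathrm{Nat}(\mathrm{id},\mathrm{id})$: first write down explicit cocycle representatives $\phi\in\Hom(P_\bullet,\mathfrak{B}_{i^-})$ for each class in $\mathrm{HH}^1(\mathfrak{B}_{i^-})\cong \Bbbk^{\Z_{\ge 0}}$, then translate to the bar resolution via the map $G_\bullet$ of Theorem~\ref{thm:Gn}, and finally identify the components $\eta^1,\eta^2$ from the bar-level formula. The input is the identification $\mathrm{HH}^1(\mathfrak{B}_{i^-})\cong \mathrm{HH}^1(\Delta)$ coming from the long exact sequence, so the parameter family $\{\sigma_\ell\}$ is the same one already used for the identity functor; only the target bimodule has changed.

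First I would pick representatives. Since away from the punctures $x_i,x_{i+1}$ the bimodule $\mathfrak{B}_{i^-}$ agrees with $\Delta$, a cocycle representative parallel to the $\mathrm{id}\Rightarrow\mathrm{id}$ case is $\phi_\ell(q_j)=\phi_\ell(p_{j+1})=\tfrac12\sigma_\ell$ and $\phi_\ell(s_j)=\sigma_{\ell-1}$, but with targets reinterpreted inside $\mathfrak{B}_{i^-}$ via the inclusion $\iota_i\colon\mathfrak{B}_{i^-}\hookrightarrow \Delta$. The nontrivial content is in the cases $k=i$ or $j=i$, where a generator of $P_1$ lands in the non-diagonal part of $\mathfrak{B}_{i^-}$; here we use the diagrammatic presentation of $\mathfrak{B}_{i^-}$ to choose a $[\tfrac12,\tfrac12]$-splitting into $T_{i-1}\oplus T_{i+1}$, mirroring the convention in Theorem~\ref{thm:obj} and Remark~\ref{rmk:beta}.

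Next I would apply $G_1$ and $G_2$. On a single irreducible path $a_{kj}s^\alpha$, the computation of $\eta^1=\phi\comp G_1$ reduces exactly as in the degree~1 case for $\mathrm{id}\Rightarrow\mathrm{id}$: the factor $\mathfrak{q}(a_{kj}s^\alpha)=2\alpha+|j-k|$ comes from counting the number of bouncings in the strand diagram expansion of $G_1(a_{kj}s^\alpha)$. When $k=i$ (resp.\ $j=i$) the target of $\phi$ is read off in $\mathfrak{B}_{i^-}$ rather than $\Delta$, producing the shifted generator $a_{i+\mathrm{sgn}(j-i),j}s^{\alpha+\ell}$ (resp.\ the bracket entry at $T_i$). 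For $\eta^2=\phi\comp G_2$, the formula in Theorem~\ref{thm:Gn} writes $G_2(a_{lk}s^\beta\otimes a_{kj}s^\alpha)$ as a sum of $\mathrm{split}_2$ terms, which after pairing with $\phi$ reduce to counting the turning pattern around each red strand; exactly as in Remark~\ref{remark}, this turning count packages into the coefficient $\mathfrak{Q}(a_2,a_1)$. The case analysis in \eqref{eq:242} is dictated entirely by whether the composition bends back across puncture $x_i$ or $x_{i+1}$, since this is the only place where the monomorphism $\iota_i$ fails to be an isomorphism and hence is the only place that a nontrivial degree-$(-1)$ map into $\bim T_i$ is forced.

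The main obstacle is the final bookkeeping for $\mathfrak{Q}$. One must track the interplay between the disc-corrections in $\bim^2$ (Theorem~\ref{thm:Marco}), the splitting choice in the representative of $\bim T_i$, and the signs accumulated in $G_2$. To handle this cleanly I would verify the result by checking the cocycle condition \eqref{eq:mu1-natural} on the three boundary types of length-two chains: $(l\ne i,\,k\ne i)$ trivially matches the $\mathrm{id}\Rightarrow\mathrm{id}$ calculation; $(l=i,\,j\ne i,\,k\ne i)$ demands that the $\mu^2(\eta^1,\mathrm{id})-\mu^2(\bim^1,\eta^1)$ piece cancels $\eta^1\comp \mu^2$ up to a $\mu^1_\delta$-boundary, which forces the coefficient $\mathrm{sgn}(i-k)\mathfrak{q}(a_2a_1)$; and $(k=i$ or $j=i)$ requires including the $\bim^2$ correction term, producing the factor of $\tfrac12$ in the remaining cases. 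Once each case is matched, the vanishing of $\mathfrak{Q}$ when $\beta+\alpha+\delta(j,k,i)=0$ follows by inspection, since then no dot is available to remove, and the naturality is automatic.
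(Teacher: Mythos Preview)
Your proposal is correct and follows exactly the approach implicit in the paper. In fact, the paper does not give a detailed proof of this theorem at all: it states at the start of \S9.2 that ``We will not give all the proofs for conciseness,'' and after the statement only offers a sample verification of the naturality condition \eqref{eq:mu1-natural} on a length-$2$ chain (Fig.~\ref{fig:Nat1}). Your outline---transporting the $\mathrm{HH}^1(\Delta)$ representatives through the isomorphism $\mathrm{HH}^1(\mathfrak{B}_{i^-})\cong\mathrm{HH}^1(\Delta)$ from the long exact sequence, then precomposing with $G_1,G_2$ as in the degree-$1$ $\mathrm{id}\Rightarrow\mathrm{id}$ argument, and finally checking the $A_\infty$-naturality cocycle condition case by case---is precisely the method the paper uses for the analogous results and tacitly intends here.
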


One can verify the naturality conditions on inputs of length $2$ and $3$ by writing down the corresponding commutative diagrams. For example,

\begin{figure}[H]
  \centering
  \begin{tikzpicture}[scale=1.3]
  \node at (0,-1) {$T_4$};
  \node at (-1.2,1) {$T_1$};
  \node at (0,4) {$T_2$};
  \node at (2.8,-0.2) {$T_1$};
  \node at (4,2.8) {$T_3$};
  \node at (4,4.2) {$T_1$};
  \node at (2.8,3.5) {$T_2$};
  \node at (4,3.5) {$\oplus$};
  \draw[gray] (4.3,2.5)--(4.5,2.5)--(4.5,4.5)--(4.3,4.5);
  \draw[gray] (2.6,2.5)--(2.4,2.5)--(2.4,4.5)--(2.6,4.5);
  \node[gray] at (5.5,4.2) {\scriptsize $\beta_{2^-}T_2$};
  \node[gray] at (3.3,-0.7) {\scriptsize $\beta_{2^-}T_1$};
  \draw[->, JK, shorten <=8pt, shorten >=8pt, line width=0.7pt] (-1.2,1) to[out=30,in=160] node[left,pos=0.6,yshift=5pt] {\scriptsize$\begin{smallmatrix}
    \eta^1(a_2)\\
    \sim\mathfrak{q}(a_2)
  \end{smallmatrix}$}
  (4,4.2);
  \draw[->, JK, dashed, shorten <=8pt, shorten >=8pt, line width=0.7pt] (0,-1) -- node[right,pos=0.6] {\scriptsize$\begin{smallmatrix}
    \eta^1(a_2a_1)\\
    \sim\mathfrak{q}(a_2a_1)
  \end{smallmatrix}$}
  (4,2.8);
  \draw[->,shorten <=8pt, shorten >=8pt, line width=0.7pt] (2.8,3.5) --  (4,4.2);
  \draw[->,shorten <=8pt, shorten >=8pt, line width=0.7pt] (2.8,3.5) -- node[below,midway]{\scriptsize$-$} (4,2.8);
  \draw[->,shorten <=8pt, shorten >=8pt, line width=0.7pt] (-1.2,1) --node[left,midway] {\scriptsize $a_2$}  (0,4);
  \draw[->, shorten <=8pt, shorten >=8pt, line width=0.7pt] (0,-1) --node[left,midway]{\scriptsize$a_1$}  (-1.2,1);
  \draw[->, dashed, shorten <=8pt, shorten >=8pt, line width=0.7pt] (0,-1) --node[right,pos=0.8]{\scriptsize$a_2 a_1$}  (0,4);
  \draw[->, shorten <=8pt, shorten >=8pt, line width=0.7pt] (0,-1) --node[left,midway]{\scriptsize$a_1$}  (-1.2,1);
  \draw[->, shorten <=8pt, shorten >=8pt, line width=0.7pt] (2.8,-0.2) to[out=15.9,in=-20]node[right,midway]{\scriptsize $\beta^1_{2^-}(a_2)$} (4,4.2);
  \draw[->, JK, shorten <=8pt, shorten >=8pt, line width=0.7pt] (0,-1) -- node[below,midway] {\scriptsize $\begin{smallmatrix}
    \eta^1(a_1)\\
    \sim \mathfrak{q}(a_1)
  \end{smallmatrix}$}
  (2.8,-0.2);
  \draw[->, JO, shorten <=8pt, shorten >=8pt, line width=0.7pt] (0,-1) -- node[left,midway] {\scriptsize$\begin{smallmatrix}
    \eta^2(a_2, a_1)\\
    \sim\mathfrak{q}(a_2a_1)
  \end{smallmatrix}$}
   (2.8,3.5);

  \end{tikzpicture}
  \caption{Naturality condition for $\eta\in \mathrm{Nat}^1(\mathrm{id},\bim)$}
  \label{fig:Nat1}
\end{figure}

In Fig.~\ref{fig:Nat1}, we only write the coefficient before each diagram. We can check the naturality condition \eqref{eq:mu1-natural}, which is
\begin{gather}
  \mu^2_{\delta}(\eta^1(a_2),a_1)+\mu^2_{\delta}(\bim^1(a_2),\eta^1(a_1))+\mu^1_{\delta}(\eta^2(a_2,a_1))-\eta^1(\mu^2(a_2,a_1))=0,
  \\
  \implies \eta^1(a_2)\cdot a_1+\bim^1(a_2)\cdot \eta^1(a_1)=\delta\eta^2(a_2,a_1)+\eta^2(a_2,a_1)\delta+\eta^1(a_2\cdot a_1).
\end{gather}

The length~$3$ case is straightforward to verify, so we omit it here.

\subsubsection{Degree 2}

\begin{theorem}
The degree 2 natural transformations
\[
\eta_{\{\vartheta_i\}}\colon \mathrm{id}\Rightarrow \bim,
\qquad 
\{\vartheta_1,\dots,\vartheta_{i-1},\vartheta_{i+1},\dots,\vartheta_{|\mathbf{x}|-1}\}\in \Bbbk^{|\mathbf{x}|-2}
\cong \mathrm{HH}^2(\mathfrak{B}_{i^-}),
\]
are given by
\begin{align}
  &\eta^2(a_{lk}s^{\beta},a_{kj}s^{\alpha})
  =\begin{cases}
    \mathfrak{c}(a_{lk}s^{\beta},a_{kj}s^{\alpha})\, a_{lj}s^{\beta+\alpha+\delta(j,k,l)-1},& l\ne i,\\
    \mathfrak{c}(a_{ik}s^{\beta},a_{kj}s^{\alpha})\, a_{i+\mathrm{sgn}(j-i),j}s^{\beta+\alpha+\delta(j,k,i)-1},& l=i\ne j\\
    \mathfrak{c}(a_{ik}s^{\beta},a_{ki}s^{\alpha})\, \begin{bmatrix}
    \frac12 q_{i-1}s^{\beta+\alpha+|i-k|-2}, &\frac12 p_{i+1}s^{\beta+\alpha+|i-k|-2}
  \end{bmatrix},& l=j=i.
  \end{cases}\\
  &\eta ^3\left( a_{ml}s^{\gamma},a_{lk}s^{\beta},a_{kj}s^{\alpha} \right) =\begin{cases}
	0,&		m\ne i,\\
	\mathfrak{C} \left( a_{il}s^{\gamma},a_{lk}s^{\beta},a_{kj}s^{\alpha} \right) a_{ij}s^{\gamma +\beta +\alpha +\delta \left( j,k,l,i \right) -2},&		m=i.\\
\end{cases}\\
  &\eta^0=\eta^1=\eta^4=\cdots=0
\end{align}
where we set $\vartheta_0=\vartheta_i=\vartheta_{|\mathbf{x}|}=0$ in the expression \eqref{eq:coe} for $\mathfrak{c}(a_2,a_1)$, and 
\begin{align}
\delta \left( i,j,k,l \right)& \coloneqq \delta \left( i,j,l \right) +\delta \left( j,k,l \right) =\delta \left( i,k,l \right) +\delta \left( i,j,k \right) 
\\
&=\frac{1}{2}\left( |i-j|+|j-k|+|k-l|-|i-l| \right) 
\end{align}
stands for the number of dots in $a_{lk}a_{kj}a_{ji}$, and the coefficient $\mathfrak{C}$ is given by
\begin{align}
  \mathfrak{C} &\left( a_3,a_2,a_1 \right) =\mathfrak{C} \left( a_{il}s^{\gamma},a_{lk}s^{\beta},a_{kj}s^{\alpha} \right) 
\\
&=\begin{cases}
	\frac{1}{2}\left( \left( -\mathrm{sgn} \left( j-i \right) +\mathrm{sgn} \left( k-i \right) \right) \mathfrak{c} \left( a_3,a_2 \right) +\left( \mathrm{sgn} \left( j-i \right) -\mathrm{sgn} \left( l-i \right) \right) \mathfrak{c} \left( a_2,a_1 \right) \right) ,&		l\ne i,\\
	\frac{1}{2}\left( \left( -\mathrm{sgn} \left( j-i \right) +\mathrm{sgn} \left( k-i \right) \right) \mathfrak{c} \left( a_3,a_2 \right) \right) ,&		l=i.\\
\end{cases}
\end{align}

\end{theorem}

\bibliographystyle{plain}
\bibliography{refs}

\end{document}